\documentclass[12pt,reqno]{amsart}
\usepackage{amssymb}
\usepackage{mathtools, amsmath, amsthm, thmtools}
\usepackage[breaklinks]{hyperref}
\theoremstyle{thmrm} 
\usepackage{cleveref}

\usepackage{graphicx}

\usepackage{longtable}
\theoremstyle{plain}
\newtheorem{theorem}{Theorem}[section]
\newtheorem{lemma}[theorem]{Lemma}

\newtheorem{proposition}[theorem]{Proposition}
\newtheorem{remark}[theorem]{Remark}
\theoremstyle{proof}
\theoremstyle{definition}

\newtheorem{definition}[theorem]{Definition}

\theoremstyle{remark}
\theoremstyle{lamma}

\numberwithin{equation}{section}
\numberwithin{theorem}{section}

\crefname{lemma}{Lemma}{Lemmas}
\crefname{theorem}{Theorem}{Theorems}
\crefname{proposition}{Proposition}{Propositions}
\crefname{corollary}{Corollary}{Corollaries}
\crefname{remark}{Remark}{Remarks}

\title[]{Integrable representations for toroidal Lie algebras co-ordinated by rational quantum torus} 
\author{Suman Rani, Punita Batra\textsuperscript{*}}
\address{Harish-Chandra Research Institute,  A CI of Homi Bhabha National
Institute, Chhatnag Road, Jhunsi, Prayagraj - 211019}
\email{sumanrani@hri.res.in, batra@hri.res.in}

\begin{document}

\maketitle
\let\thefootnote\relax\footnotetext{* Corresponding author}
\begin{abstract}
   We classify irreducible integrable modules with finite-dimensional weight spaces for toroidal Lie algebras coordinated by rational quantum torus with trivial central action. Let $\mathbb{C}_q$ denote the rational quantum torus associated with a rational quantum matrix $q$, and let $\hat{\tau}(d,q)$ be the toroidal Lie algebra coordinated by rational quantum torus obtained by adjoining the derivation space $D$ to the universal central extension $\tilde{\tau}(d,q)=\mathfrak{sl}_d(\mathbb{C}_q)\oplus HC_1(\mathbb{C}_q)$ of $\mathfrak{sl}_d(\mathbb{C}_q)$. The case of nontrivial central action was previously classified by S. Eswara Rao and K. Zhao \cite{[18]}. The present work completes the classification by describing all irreducible integrable $\hat{\tau}(d,q)$-modules with finite-dimensional weight spaces in the case where the $n$-dimensional center $C$ acts trivially on the modules.
\end{abstract}
\vspace{1em}
\noindent\textbf{Mathematics Subject Classification (2020):} 17B67; 17B66
\section{Introduction}
Affine Lie algebras play a central role in both mathematics and theoretical physics, arising naturally in the study of two-dimensional conformal field theory, string theory, and statistical mechanics. These algebras admit a rich representation theory and serve as a starting point for understanding more general multivariable analogues. In particular, toroidal Lie algebras are natural $n$-variable generalizations of affine Lie algebras. They can be realized as the universal central extensions of multiloop algebras~\cite{[1],[9]} and were first introduced by Moody, Rao, and Yokonuma~\cite{[16]}. Over the past three decades, toroidal Lie algebras have attracted considerable attention due to their deep connections with extended affine Lie algebras (EALAs), which have become a central object of study in the past twenty years. Notably, toroidal Lie algebras form the cores of EALAs, and those of type $A_{d-1}$ are associated with the Lie algebra $\mathfrak{gl}_d(\mathbb{C}_q)$, where $\mathbb{C}_q$ is the Laurent polynomial algebra in $n$ noncommuting variables $t_1, \dots, t_n$ defined by a rational quantum matrix $q$ whose entries are roots of unity.

\medskip

Integrable representations of affine and toroidal Lie algebras have played a foundational role in the development of the theory, beginning with the classical work of Kac~\cite{[8]}, the influential results of Chari~\cite{[4]} and continuing to the present. Such representations are of particular importance because they admit a highest-weight theory, possess finite-dimensional weight spaces, and are central to applications in mathematical physics and combinatorics.
Parallel developments have also occurred in the context of \emph{Lie superalgebras}, which generalize Lie algebras through a $\mathbb{Z}_2$–grading into even and odd parts. Integrable modules for affine Lie superalgebras have been extensively studied, notably in the work of Futorny and Rao~\cite{[15]}, where a complete classification of irreducible integrable modules was obtained. These results reveal close structural analogies between the affine and toroidal settings, motivating further study of integrable representations in the superalgebraic setting.

\medskip

The structure of the paper is as follows. In Section~2, we define the rational quantum torus $\mathbb{C}_q$ associated with the rational quantum matrix $q$. We denote by $\mathfrak{gl}_d(\mathbb{C}_q)$ the Lie algebra of $M_d(\mathbb{C}) \otimes \mathbb{C}_q$, and consider its Lie subalgebra $\tau(d,q)=\mathfrak{sl}_d(\mathbb{C}_q)$, where $d \ge 2$ and $n \ge 2$ are integers. The trace of matrices in $\mathfrak{sl}_d(\mathbb{C}_q)$ lies in $[\mathbb{C}_q,\mathbb{C}_q]$. The universal central extension of $\tau(d,q)$ is $\tilde{\tau}(d,q) = \tau(d,q) \oplus HC_1(\mathbb{C}_q)$. Adding the linear span $D$ of the degree derivations $d_1, \dots, d_n$ to $\tilde{\tau}(d,q)$ yields the toroidal Lie algebra $\hat{\tau}(d,q) = \tilde{\tau}(d,q) \oplus D$. We classify irreducible integrable modules $V$ for $\hat{\tau}(d,q)$ with finite-dimensional weight spaces in which the $n$-dimensional center $C$ acts trivially. The classification of irreducible integrable $\hat{\tau}(d,q)$-modules with nontrivial central action was obtained by S.~Eswara Rao and K.~Zhao in~\cite{[18]}. Our work thus completes the classification in the case of trivial center action.

\medskip

In Section~3, we introduce the notion of integrable modules for $\hat{\tau}(d,q)$ and develop their basic structural properties. We describe the weight lattice, define real and null roots and construct the associated Weyl group. Several Lemmas paralleling the classical affine case provide the technical foundation for subsequent sections. Section~4 focuses on the action of the central extension $HC_1(\mathbb{C}_q)$ on irreducible integrable modules. We prove that if each central element $c_i=\langle t_i,t_i^{-1}\rangle$ acts trivially on such a module $V$, then the entire central part $HC_1(\mathbb{C}_q)$ acts trivially, thereby reducing the classification problem to modules over the quotient algebra where the center acts by zero.

Section~5, forming the core of the paper, develops the \emph{highest weight theory} for irreducible integrable modules of $\hat{\tau}(d,q)$. Throughout this section, we work with $\mathfrak{gl}_d(\mathbb{C}_q)$-modules in place of $\tau(d,q)$-modules, using the decomposition (see~\cite[1.10]{[18]})
$
\mathfrak{gl}_d(\mathbb{C}_q)
= \mathfrak{sl}_d(\mathbb{C}_q) \oplus (I \otimes Z(\mathbb{C}_q)),
$
where the kernel is central, so that each $\mathfrak{sl}_d(\mathbb{C}_q)$-module may be viewed as a $\mathfrak{gl}_d(\mathbb{C}_q)$-module with the additional central part acting trivially. We introduce the \emph{highest weight space}
$
V^{+}
$
and analyze its structure as a module over $H \otimes \mathbb{C}_q \oplus D$. It is shown that $V^{+}$ is nonzero and irreducible under this subalgebra.
We then construct a family of commuting bijective linear operators, referred to as \emph{highest central operators}, acting on $V^{+}$ and encoding the influence of the central elements of the quantum torus. These operators provide a bridge between the graded representation theory of $\hat{\tau}(d,q)$ and the non-graded theory for $\tau(d,q)$. Using these constructions, we obtain a finite-dimensional quotient of $V$ that carries a natural $\mathfrak{gl}_d(\mathbb{C}_q)$-module structure. The Section proves an important result, \cref{theorem 5.16}, which establishes the existence of a \emph{finite-dimensional irreducible quotient} of $V$ as a module over $\mathfrak{gl}_d(\mathbb{C}_q)$.

In Section~6, we reverse this process: starting from a finite-dimensional irreducible $\mathfrak{gl}_d(\mathbb{C}_q)$-module, we construct the corresponding graded $\hat{\tau}(d,q)$-module by tensoring with the Laurent polynomial ring $A_n = \mathbb{C}[t_1^{\pm1}, \ldots, t_n^{\pm1}]$. We show that the resulting module is completely reducible, integrable, and has finite-dimensional weight spaces, with only finitely many irreducible components up to grade shift. This yields a precise correspondence between graded and non-graded modules in the trivial central action case. Finally, Section~7 recalls the classification of finite-dimensional $\tau(d,q)$-modules obtained by S.~Eswara Rao and K.~Zhao~\cite{[18]} and applies it to our setting. Combining these results with our reduction theorems yields a complete classification of irreducible integrable $\hat{\tau}(d,q)$-modules with finite-dimensional weight spaces under trivial central action, completing the overall classification program for integrable representations of toroidal Lie algebras coordinated by rational quantum tori.

\section{notations and preliminaries}\label{section 2}
First we recall some notations and known results from \cite{[3]}. We will always denote $\mathbb{Z},\;\mathbb{Z_+},\;\mathbb{N} $ and $\mathbb{C}$ the set of integers, non-negative integers, positive integers and complex numbers, respectively.  Let $q = (q_{ij})_{1\leq i,j\leq n}$ where $q_{ij}$'s are complex numbers that are nonzero and $q_{ii}=1$, $q_{ij}=q_{ji}^{-1}, \; \forall \; 1 \le i,j \le n$. The quantum torus associated with this $q$ is $\mathbb{C}_q= \mathbb{C}[t_1^{\pm},\cdots ,t_n^{\pm}]$; the non-commutative laurent polynomial algebra with the relation defined as $t_it_j = q_{ij}t_jt_i$. Note that $\mathbb{C}_q$ defined above is $\mathbb{Z}^n$-graded with one-dimensional graded components. We assume a rational quantum torus i.e. $q_{ij}^{N_{ij}}=1$ for some positive integer $N_{ij}, \; \forall \; 1 \le i,j \le n$.\\
For $a=(a_1,\cdots ,a_n) \in \mathbb{Z}^n$, we write $t^a = t_1^{a_1} \cdots t_n^{a_n} \in \mathbb{C}_q$. Now we define the maps, $\sigma,f : \mathbb{Z}^n \times \mathbb{Z}^n \to \mathbb{C}^\times$ by 
\begin{equation}
    \sigma(a,b) = \prod_{\substack{1\leq i \leq j \leq n}}q_{ij}^{a_jb_i} ,
\end{equation}
\begin{equation}
    f(a,b) = \sigma(a,b) \sigma(b,a)^{-1}.
\end{equation}
For ${a,b} \in \mathbb{Z}^n,~ k \in \mathbb{Z}$, it is easy to check the following
\begin{equation}
    f(a,b) = \prod_{i,j=1}^n q_{ji}^{a_jb_i}, 
\end{equation}
$$f(a,b) = f(b,a)^{-1},$$
$$\sigma(a+b,c) = \sigma(a,c) \sigma(b,c),$$
$$\sigma(a,b+c) = \sigma(a,b) \sigma(a,c),$$
$$f(a+b,c) = f(a,c)f(b,c),$$
$$f(a,b+c) = f(a,b)f(a,c),$$
$$f(ka,a) = f(a,ka) = 1,$$
\begin{equation}
    t^at^b = \sigma(a,b)t^{a+b}, \hspace{1cm} t^at^b = f(a,b)t^bt^a.
\end{equation}
The \emph{radical} of $f$ is given by
\begin{equation}
    \operatorname{rad}f  = \{a \in \mathbb{Z}^n \mid f(a,b) = 1, \; \forall b \in \mathbb{Z}^n\}.
\end{equation}
Note that $m \in \operatorname{rad}f $ $\Leftrightarrow$ $f(a,b) = 1,$ $\forall a,b\in \mathbb{Z}^n $ with $a+b = m$.
Further $\operatorname{rad}f $ is a subgroup of $\mathbb{Z}^n$.

\begin{proposition} {\normalfont(\cite[Proposition 2.44]{[3]})}\hfill\break
    Let $\mathbb{C}_q$ be the rational quantum torus defined as above, then:
    \begin{enumerate}
        \item The center $Z(\mathbb{C}_q)$ of $\mathbb{C}_q$ has a basis consisting of monomials $t^a$ with $a \in \operatorname{rad}f $.
        \item The Lie subalgebra $[\mathbb{C}_q, \mathbb{C}_q]$ of $\mathbb{C}_q$ has a basis consisting of monomials $t^a$ with $a \in \mathbb{Z}^n \setminus \operatorname{rad} f $.
        \item $\mathbb{C}_q = [\mathbb{C}_q,\mathbb{C}_q] \oplus Z(\mathbb{C}_q)$.
    \end{enumerate}
\end{proposition}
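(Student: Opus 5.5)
The whole argument rests on the commutator of two monomials, which follows from $t^at^b=\sigma(a,b)t^{a+b}$ and $t^bt^a=\sigma(b,a)t^{a+b}$:
\[
[t^a,t^b]=t^at^b-t^bt^a=\bigl(\sigma(a,b)-\sigma(b,a)\bigr)t^{a+b}=\sigma(b,a)\bigl(f(a,b)-1\bigr)t^{a+b}.
\]
The coefficient vanishes exactly when $f(a,b)=1$. Since $\mathbb{C}_q$ is $\mathbb{Z}^n$-graded with one-dimensional components spanned by the $t^a$, I will argue degree by degree.

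For (1), take $x=\sum_a \lambda_a t^a$ and compute $[x,t^b]=\sum_a \lambda_a\,\sigma(b,a)(f(a,b)-1)t^{a+b}$. The monomials $t^{a+b}$ are linearly independent for distinct $a$, and $\sigma(b,a)\neq 0$. Hence $x$ is central if and only if, for every $a$ with $\lambda_a\ne0$, we have $f(a,b)=1$ for all $b$. It suffices to test against the generators $t_i^{\pm1}$, since $f(a,\cdot)$ is multiplicative in the second slot. So $x$ is central if and only if every monomial occurring in $x$ has degree $a\in\operatorname{rad}f$. This gives the claimed basis.

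For (2), the commutator formula shows that $[\mathbb{C}_q,\mathbb{C}_q]$ is spanned by the elements $t^{a+b}$ with $f(a,b)\ne1$. By the remark after the definition of $\operatorname{rad}f$, $m\in\operatorname{rad}f$ if and only if $f(a,b)=1$ for all $a+b=m$. Therefore $[\mathbb{C}_q,\mathbb{C}_q]$ lies in the span of $\{t^m: m\notin\operatorname{rad}f\}$. Conversely, for $m\notin\operatorname{rad}f$ we can choose $a,b$ with $a+b=m$ and $f(a,b)\ne1$, and then $t^m$ is a nonzero multiple of $[t^a,t^b]$. So the two spaces coincide.

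Part (3) follows immediately: the monomials $t^a$ form a basis of $\mathbb{C}_q$, and $\mathbb{Z}^n$ is the disjoint union of $\operatorname{rad}f$ and its complement, so (1) and (2) give the direct sum decomposition. The main point needing care is the converse direction of (2), namely producing a decomposition $m=a+b$ with $f(a,b)\ne1$. The remark on $\operatorname{rad}f$ settles this directly: for instance, if $f(m,c)\ne1$ then $a=m+c$, $b=-c$ works, since $f(m+c,-c)=f(m,c)^{-1}\ne1$.
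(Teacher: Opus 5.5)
Your proof is correct and complete. The paper gives no proof of its own and simply quotes this as Proposition 2.44 of its reference [3]; your degree-by-degree argument is the standard one behind that result, built from the formula $[t^a,t^b]=\sigma(b,a)(f(a,b)-1)t^{a+b}$, the linear independence of the monomials, and your explicit choice $a=m+c$, $b=-c$ for the converse in (2). As a side remark, rationality of $q$ is never used, so the statement holds for an arbitrary quantum torus.
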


Let $M_d(\mathbb{C})$ denote the associative algebra of $d\times d$ complex matrices with standard basis elements $E_{ij}$ for $1 \le i,j \le n$. We denote the corresponding Lie algebra by $\mathfrak{gl}_d(\mathbb{C})$ and the simple subalgebra of $\mathfrak{gl}_d(\mathbb{C})$ of trace-zero matrices by  $\mathfrak{sl}_d(\mathbb{C})$.

Further we denote the associative matrix algebra of $d \times d$ matrices with entries in $\mathbb{C}_q$ by $M_d(\mathbb{C}_q)$.
$$M_d(\mathbb{C}_q) \cong M_d(\mathbb{C}) \otimes \mathbb{C}_q$$ and the corresponding Lie algebra is denoted by $\mathfrak{gl}_d(\mathbb{C}_q)$ and the Lie bracket is defined by
$$[X \otimes t^a, Y \otimes t^b]_0 = XY \otimes t^at^b - YX\otimes t^bt^a.$$
We define $\mathfrak{sl}_d(\mathbb{C}_q) = \{X \in M_d(\mathbb{C}_q) \; | \; Tr(X) \in [\mathbb{C}_q, \mathbb{C}_q]\}$.\\
Now we consider the following Lie subalgebra inside $M_d (\mathbb{C}_q)$
\[
\tau(d,q) = (I_d \otimes [\mathbb{C}_q,\mathbb{C}_q]) \oplus (\mathfrak{sl}_d(\mathbb{C}) \otimes \mathbb{C}_q),
\]
where the Lie bracket is given as 
$$[I_d \otimes [t^a,t^b],X \otimes t^c] = X \otimes [[t^a,t^b],t^c],$$
$$[I_d \otimes [t^a,t^b],I_d \otimes [t^c,t^d]] = I_d \otimes [[t^a,t^b],[t^c,t^d]],$$
$$[X \otimes t^a, Y \otimes t^b] = [X,Y]' \otimes \frac{t^a \circ t^b}{2} + (X \circ Y) \otimes \frac{1}{2}[t^a,t^b] + \frac{1}{d} Tr(XY)I_d \otimes [t^a,t^b],$$
where $$t^a \circ t^b = t^at^b + t^bt^a,$$
$$[X,Y]' = XY - YX,$$
$$X \circ Y = XY + YX - \frac{2}{d}Tr(XY)I_d,$$
$$[t^a,t^b] = t^at^b - t^bt^a.$$
Now it is easy to see that (also mentioned in \cite{[3]})
\[
\tau(d,q)=\mathfrak{sl}_d(\mathbb{C}_q).
\]

We recall the universal central extension of $\tau(d,q)$ from \cite{[3]}. Let \(J\subset\mathbb{C}_q\otimes\mathbb{C}_q\) be the subspace spanned by
\[
x\otimes y+y\otimes x,\quad xy\otimes z+yz\otimes x+zx\otimes y,
\quad x,y,z\in\mathbb{C}_q.
\]
For \(x,y\in\mathbb{C}_q\), let \(\langle x,y\rangle_0\) be the image of \(x\otimes y+J\) in
\((\mathbb{C}_q\otimes\mathbb{C}_q)/J\).

We define
\[
HC_1(\mathbb{C}_q)
= \{\langle t^a,t^b\rangle\mid a+b\in\operatorname{rad}f\},
\quad
\langle t^a,t^b\rangle
= \delta_{a+b,\operatorname{rad}f}\langle t^a,t^b\rangle_0.
\]
Let $\tilde{\tau}(d,q) = \tau(d,q) \oplus HC_1(\mathbb{C}_q)$ with the Lie brackets 
\[
[X \otimes t^a, Y \otimes t^b] = [X \otimes t^a, Y \otimes t^b]_0 + Tr(XY) \langle t^a,t^b \rangle_0 \delta_{a+b,\operatorname{rad}f}
\]
where \[
\delta_{a, \operatorname{rad}f}= \begin{cases}
    1 & \text{if}\;a \in \operatorname{rad}f,\\
    0 & \text{if}\;a \notin \operatorname{rad}f.
\end{cases}
\]
Then from \cite{[3]}, $\tilde{\tau}(d,q)$ is the universal central extension of $\tau (d,q).$
We note that $\tilde{\tau}(d,q)$ is $\mathbb{Z}^n$-graded and to see this fact we add degree derivations. Let $D$ be the linear span of degree derivations $d_1, \cdots ,d_n$.\\
Let $\hat{\tau}(d,q) = \tilde{\tau}(d,q) \oplus D$ and Lie brackets are extended as follows:
$$[d_i,X \otimes t^a] = a_iX \otimes t^a,$$
$$[d_i,I\otimes t^a] = a_iI\otimes t^a,$$
$$[d_i,\langle t^a,t^b\rangle] = (a_i+b_i)\langle t^a,t^b\rangle,$$
$$[d_i,d_j] = 0.$$
Then the \emph{center} $C $ of $\hat{\tau}(d,q)$ is spanned by $c_i = \langle t_i,t_i^{-1}\rangle $ for $i = 1, \cdots ,n$.

\begin{lemma} \label{lemma 2.2}
{\normalfont(\cite[Lemmas 3.18, 3.19]{[3]})}\hfill \break
For $\langle\;,\; \rangle$ and $HC_1(\mathbb{C}_q)$ as defined above, we have the following
    \begin{enumerate}
    \item $\langle 1, t^a \rangle = 0 \quad \forall a \in \mathbb{Z}^n.$
    \item $\displaystyle \langle t^a, t^b (t^a)^{-1} \rangle = \sum_{i=1}^d a_i \langle t_i, t^b t_i^{-1} \rangle.$
    \item $\displaystyle \langle t^a, t^b \rangle = \sigma(a,b) \sum_{i=1}^d a_i \langle t_i, t^{a+b} t_i^{-1} \rangle.$
    \item $(t^b)^{-1} = \sigma(b,b) t^{-b}.$
    \item $\dim HC_1(\mathbb{C}_q)_a =
        \begin{cases}
            0    &  a\notin \operatorname{rad}f,\\
            n-1  &  a \in \operatorname{rad}f \setminus\{0\},\\
            n    &  a=0.
        \end{cases}$\\
    \end{enumerate}
\end{lemma}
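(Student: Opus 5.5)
The five parts of \cref{lemma 2.2} are taken from \cite{[3]}. I would verify them directly from the defining relations of $J$, namely antisymmetry $\langle x,y\rangle_0=-\langle y,x\rangle_0$ and the cyclic identity $\langle xy,z\rangle_0+\langle yz,x\rangle_0+\langle zx,y\rangle_0=0$, combined with the multiplication rule $t^at^b=\sigma(a,b)t^{a+b}$. I will take these in the order (1), (4), (2), (3), (5), since each later part uses the earlier ones. In the sums in (2) and (3) the index should run over $i=1,\dots,n$, since $a\in\mathbb{Z}^n$.

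\emph{Parts (1) and (4).} For (1), put $x=y=1$, $z=t^a$ in the cyclic identity. This gives $\langle 1,t^a\rangle_0+\langle t^a,1\rangle_0+\langle t^a,1\rangle_0=0$. Antisymmetry turns this into $\langle t^a,1\rangle_0=0$, and hence $\langle 1,t^a\rangle_0=0$. Part (4) is purely about the algebra $\mathbb{C}_q$. We have $t^bt^{-b}=\sigma(b,-b)t^0=\sigma(b,b)^{-1}$, using bimultiplicativity of $\sigma$. Therefore $(t^b)^{-1}=\sigma(b,b)t^{-b}$.

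\emph{Part (2).} First apply the cyclic identity with $x=u$, $y=v$ invertible and $z=w\,(uv)^{-1}$. Using (1) and antisymmetry, this should give the cocycle-type rule
\[
\langle uv, w(uv)^{-1}\rangle=\langle u, w u^{-1}\rangle+\langle v, w v^{-1}\rangle
\]
whenever the degree of $w$ lies in $\operatorname{rad}f$. The radical condition is what lets the conjugations commute past each other: $w$ is then central up to the scalar $f$, and that scalar equals $1$. Inducting on the length of the monomial $t^a$ written as a product of $t_i^{\pm1}$, this rule gives (2). The inverse case needs $\langle t_i^{-1},wt_i\rangle=-\langle t_i,wt_i^{-1}\rangle$, which follows from the same rule together with (1).

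\emph{Part (3).} Write $t^b=\sigma(a,b)^{-1}\,t^{a+b}$ times the appropriate inverse. Concretely, $t^b=c\,t^{a+b}(t^a)^{-1}$ for a scalar $c$, and since $a+b\in\operatorname{rad}f$ one can reorder the factors to reach the form $t^{a+b}(t^a)^{-1}$. Computing the scalar with (4) gives $\sigma(a,b)$. Then (2), applied with $t^{a+b}$ in place of $t^b$, yields (3).

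\emph{Part (5).} By (3), $HC_1(\mathbb{C}_q)_m$ for $m\in\operatorname{rad}f$ is spanned by the $n$ elements $\langle t_i,t^mt_i^{-1}\rangle$. When $m=0$ these are $c_1,\dots,c_n$. When $m\neq0$, take $a=m$ in (2) with $b=m$. Since $\langle t^m,t^m(t^m)^{-1}\rangle=\langle t^m,1\rangle=0$ by (1), this gives the relation $\sum_i m_i\langle t_i,t^mt_i^{-1}\rangle=0$. That bounds the dimension by $n-1$.

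The main obstacle is the matching lower bound on the dimension, which needs independence. I would construct explicit linear functionals on $(\mathbb{C}_q\otimes\mathbb{C}_q)/J$ that vanish on $J$. For $m\in\operatorname{rad}f$ and a vector $v\in\mathbb{C}^n$, define $\phi_v(t^a\otimes t^b)=\delta_{a+b,m}\,\sigma(a,b)\,(v\cdot a)$, or a suitable twist of this. One checks that $\phi_v$ kills both generators of $J$; this uses $a+b+c=m\in\operatorname{rad}f$ to control the $\sigma$ factors. The functionals $\phi_v$ with $v\cdot m=0$ then separate an $(n-1)$-dimensional space of the spanning elements, and all $v$ are allowed when $m=0$. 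The twist and the cocycle bookkeeping in this last step are the delicate part.
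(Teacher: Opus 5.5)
Your proposal is correct, and it takes a different route from the paper only in the sense that the paper gives no argument here: it imports the lemma wholesale from \cite{[3]} (Lemmas 3.18, 3.19). Your derivation works directly from the two defining relations of $J$. What it buys is transparency about where $\operatorname{rad}f$ enters. First, $t^m$ is central for $m\in\operatorname{rad}f$, which gives $vwv^{-1}=w$ in your cocycle rule. Second, $f(a,b)=1$, i.e.\ $\sigma(a,b)=\sigma(b,a)$, whenever $a+b\in\operatorname{rad}f$. Working it out also exposes the typo $\sum_{i=1}^d$ for $\sum_{i=1}^n$ in the statement. The citation is shorter but leaves all of this implicit.

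The step you call delicate is routine and needs no twist; your untwisted $\phi_v$ works as written. For $a+b=m$ one gets $\phi_v(t^a\otimes t^b+t^b\otimes t^a)=\sigma(a,b)(v\cdot m)$. For $a+b+c=m$, the products $t^at^bt^c$, $t^bt^ct^a$, $t^ct^at^b$ are all the same multiple $S\,t^m$, again because $t^m$ is central. Hence the cyclic generator goes to $S\bigl(v\cdot(a+b)+v\cdot(b+c)+v\cdot(c+a)\bigr)=2S(v\cdot m)$. Both expressions vanish when $v\cdot m=0$. Using $t_i^{-1}=t^{-e_i}$, one also finds $\phi_v(\langle t_i,t^mt_i^{-1}\rangle)=f(e_i,m)\,v_i=v_i$. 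So the pairing with $\{v\in\mathbb{C}^n : v\cdot m=0\}$ has rank $n-1$ for $m\neq0$ and rank $n$ for $m=0$, exactly as you claim.

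There are a few small slips, none affecting correctness:
\begin{itemize}
\item In (3) the identity is $t^b=\sigma(a,b)\,(t^a)^{-1}t^{a+b}=\sigma(a,b)\,t^{a+b}(t^a)^{-1}$. It comes straight from $t^at^b=\sigma(a,b)t^{a+b}$ and centrality of $t^{a+b}$. Your first sentence there has $\sigma(a,b)^{-1}$, and (4) is not needed.
\item The cocycle rule in (2) uses only antisymmetry and the radical condition. Part (1) enters only in the base case and in the inverse case $\langle t_i^{-1},wt_i\rangle=-\langle t_i,wt_i^{-1}\rangle$.
\item The case $a\notin\operatorname{rad}f$ of (5) is immediate from the definition of $HC_1(\mathbb{C}_q)$, and you should say so explicitly.
\end{itemize}
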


In the subsequent sections, we classify \emph{irreducible integrable modules} for
the Lie algebra $\hat{\tau}(d,q)$ with finite-dimensional weight spaces and trivial center action.
A module is called \emph{graded} if it is a $\hat{\tau}(d,q)$-module and
\emph{non-graded} if it is a module over the quotient of $\hat{\tau}(d,q)$ by its derivation subalgebra.

\section{Integrable modules}

In this section, we introduce the notion of integrable modules for the Lie algebra
$\hat{\tau}(d,q)$ and establish their basic properties.  
Throughout we assume that $V$ is a $\hat{\tau}(d,q)$-module with finite-dimensional weight spaces.

\begin{definition}
    Let $r=(r_1, \cdots ,r_n) \in \mathbb{Z}^n$. A linear map $z : V \to V$ is called a \emph{central operator} of degree $r$ if \begin{enumerate}
        \item $z$ commutes with the action of $\tilde{\tau}(d,q)$,
        \item $d_iz-zd_i=r_iz$ for all $i= 1, \cdots ,n$.
    \end{enumerate}
\end{definition}

The following result adapted from \cite[Proposition 2.4]{[18]} describes the structure of central operators acting on irreducible modules.

\begin{proposition}\label{proposition 2.4}
    Let $V$ be an irreducible $\hat{\tau}(d,q)$ module with finite-dimensional weight spaces. Then after a suitable co-ordinate change for $\mathbb{C}_q$, the following hold
    \begin{enumerate}
        \item There exist a non-negative integer $k$ and non zero central operators $z_1, \cdots z_k$ of degree $(m_1,0, \cdots ,0), \cdots (0,\cdots ,m_k, \cdots ,0)$ for some positive integers $m_1, \cdots, m_k$.
        \item If $\mathbb{C}_q$ is rational, then $k<n$.
        \item If $\langle t_i,t^rt_i^{-1}\rangle \neq 0$ on $V$, then $i\geq k+1$ and $r_{k+1} = \cdots = r_n = 0$.
    \end{enumerate}
\end{proposition}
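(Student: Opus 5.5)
The proof follows the strategy of \cite[Proposition 2.4]{[18]}, adapted to the central extension. Fix an irreducible $V$ with finite-dimensional weight spaces and consider the set
\[
\Gamma=\{r\in\mathbb{Z}^n \mid \text{there is a nonzero central operator of degree } r \text{ on } V\}.
\]
The first step is to show that $\Gamma$ is a subgroup of $\mathbb{Z}^n$ and that every nonzero central operator is bijective.

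For bijectivity: if $z$ is central of degree $r$, then $\ker z$ and $zV$ are $\hat{\tau}(d,q)$-submodules, because $z$ commutes with $\tilde\tau(d,q)$ and shifts $D$-weights by $r$. Irreducibility then forces $z$ to be bijective. Its inverse is central of degree $-r$, and composites of central operators are central of the summed degree, so $\Gamma$ is a group.

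Next, for a fixed degree $r$, the space of central operators of degree $r$ embeds into $\operatorname{Hom}(V_\lambda,V_{\lambda+r})$ by restriction to one weight space. The restriction is injective, since a central operator vanishing on $V_\lambda$ has a nonzero kernel. Hence this space is finite-dimensional. A Schur-type argument, using a common eigenvector of the commuting family applied to degree-zero operators, shows the degree-$0$ central operators are scalars. Consequently each degree-$r$ space is at most one-dimensional, and any two central operators commute up to a scalar. Choosing a $\mathbb{Z}$-basis of $\Gamma$ adapted to $\mathbb{Z}^n$ via Smith normal form gives integers $m_1,\dots,m_k$ and a change of coordinates in $GL_n(\mathbb{Z})$ with $\Gamma=m_1\mathbb{Z}e_1\oplus\cdots\oplus m_k\mathbb{Z}e_k$. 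Such a change induces an automorphism of $\mathbb{C}_q$ with a new rational matrix $q'$, which is the ``suitable coordinate change''. This proves (1).

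For (3), take $x=\langle t_i,t^rt_i^{-1}\rangle$ acting nonzero on $V$. It is central in $\tilde\tau(d,q)$ and has $D$-degree $r$, so $x$ itself is a central operator of degree $r$. Thus $r\in\Gamma$, which forces $r_{k+1}=\cdots=r_n=0$.

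To get $i\ge k+1$, use the relation $\sum_j r_j\langle t_j,t^rt_j^{-1}\rangle=0$, which follows from Lemma~\ref{lemma 2.2}(2),(3) together with skew-symmetry, and also exploit the invertible operators $z_j$. The plan is to compare $[d_j$-type$]$ commutators: conjugate the element by $z_j$ to move $r$ along $m_je_j$, and show the resulting family would give two independent central operators in a single degree, contradicting one-dimensionality. I am least sure of this step; an alternative is to transport $\langle t_j,\cdot\rangle$ via the grading argument of \cite{[18]}.

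For (2), suppose $k=n$. Then $\Gamma$ has finite index in $\mathbb{Z}^n$, and $V$ is generated by the finitely many weight spaces $V_{\lambda+s}$ with $s$ ranging over coset representatives. Rationality gives that $\operatorname{rad}f$ has finite index. Combining the two, $\tilde\tau(d,q)$ restricted to these spaces yields a finite-dimensional module over the loop-type algebra. Using $\mathfrak{sl}_d$-integrability and Weyl group invariance of weights, the $D$-weights $\lambda+\Gamma$ would all give nonzero spaces of equal dimension.

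The main obstacle is to derive a contradiction in the case $k=n$. I would try to show that $\mathfrak{sl}_d\otimes 1$-weights stay bounded while the $d$-eigenvalues are unbounded. Following \cite{[18]}, I would then show that the center $\langle t_i,t_i^{-1}\rangle$ acts by forcing $k<n$.
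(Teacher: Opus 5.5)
Your proof of (1) is correct and is the standard argument behind \cite{[18]}; the paper gives no proof of its own and only cites that reference. The argument runs: nonzero central operators are bijective because $\ker z$ and $zV$ are submodules; degree-$0$ ones are scalars; so each degree carries at most a line, the degrees form a subgroup $\Gamma\subseteq\mathbb{Z}^n$, and Smith normal form gives the coordinate change. The first half of (3), namely $r\in\Gamma$, is also fine.

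The first gap is the claim $i\ge k+1$, and the mechanism you suggest cannot produce it. The operator $z_jxz_j^{-1}$ is central of degree $r$, and in fact it equals $x=\langle t_i,t^rt_i^{-1}\rangle$ because $x\in\tilde{\tau}(d,q)$, so no second independent operator ever appears. The relation $\sum_j r_j\langle t_j,t^rt_j^{-1}\rangle=0$ is empty for $r=0$, which is exactly the case of the $c_i$. What is missing is a nonzero-level Heisenberg pair in the $e_i$-direction; this is the role of the Heisenberg algebra in \cref{proposition 4.2}. For instance, suppose $i\le k$ and $x\neq0$ on $V$, so $r\in\operatorname{rad}f$ by \cref{lemma 2.2}(5). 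Take $h\in\dot{\mathfrak{h}}$ with $\operatorname{Tr}(h^2)\neq0$, $p=m_i$, $A=h\otimes t_i^{p}$ and $B=x^{-1}(h\otimes t^{r-pe_i})$. Since $f(pe_i,r-pe_i)=1$, the $[\;,\;]_0$-part of $[h\otimes t_i^{p},h\otimes t^{r-pe_i}]$ vanishes. Then \cref{lemma 2.2}(3) gives $[A,B]=\gamma\,\mathrm{Id}_V$ with $\gamma=p\operatorname{Tr}(h^2)\sigma(pe_i,r-pe_i)\neq0$. For a weight $\mu$ put $U_s=V_{\mu+s\delta_i}$ and $\phi(s)=\operatorname{tr}_{U_s}(AB)$. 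Cyclicity of the trace gives $\phi(s)-\phi(s+p)=\gamma\dim U_s$. On the other hand, $z_i$ commutes with $A$, with $x$ and hence with $B$, and maps $U_s$ bijectively onto $U_{s+p}$, so $\phi(s+p)=\phi(s)$. Hence every $U_s$ is zero and $V=0$, a contradiction. This uses only finite-dimensional weight spaces, not integrability.

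For (2) the gap cannot be closed: with all $c_i$ acting trivially, $k=n$ genuinely occurs. So no contradiction can be extracted from $k=n$ plus rationality and integrability; bounded $\dot{\mathfrak{h}}$-weights with unbounded $d$-eigenvalues is exactly what loop modules look like. Here is an example.
\begin{enumerate}
\item Let $\pi:\mathbb{C}_q\to M_N(\mathbb{C})$ be an algebra homomorphism with every $\pi(t_j^{N_j})$ a nonzero scalar (e.g.\ $K=1$ in Section~7).
\item Let $\overline{V}=\mathbb{C}^d\otimes\mathbb{C}^N$, with $X\otimes t^a$ acting as $X\otimes\pi(t^a)$.
\item Let $V\subseteq\overline{V}\otimes A_n$ be the submodule generated by $w\otimes 1$ for a nonzero pure tensor $w$. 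Its degree-$m$ part is $\{(X\otimes\pi(t^m))w\mid X\in M_d(\mathbb{C})\}\otimes t^m$, and one checks that $V$ is irreducible and integrable, has finite-dimensional weight spaces, and $HC_1(\mathbb{C}_q)$ acts on it by $0$.
\end{enumerate}
Since $\pi(t^{m+s})\in\mathbb{C}^\times\pi(t^m)$ for $s\in N_1\mathbb{Z}\oplus\cdots\oplus N_n\mathbb{Z}$, the shifts $v\otimes t^m\mapsto v\otimes t^{m+s}$ preserve $V$ and are bijective central operators. Hence $k=n$. The same module has $k\ge1$ with all $c_i=0$, so \cref{proposition 4.2} cannot be invoked to rescue (2) either.

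What is true is this: if some $c_i=\langle t_i,t_i^{-1}\rangle$ acts nontrivially, then (3) with $r=0$ gives $i\ge k+1$, hence $k<n$. So (2) needs the nontrivial-central-action hypothesis under which \cite{[18]} works, and rationality plays no role in it.
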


We now introduce the notions of \emph{real roots}, \emph{null roots}, and the \emph{Weyl group} associated with the Lie algebra $\hat{\tau}(d,q)$. It is worth emphasizing that, in this context, the Weyl group coincides with that arising in the commuting torus case. This equivalence follows from the observation that the set of weights of any irreducible integrable module with finite-dimensional weight spaces exhibits the same structural properties as in the commuting case. Consequently we adopt the notation of Section~1 in \cite{[12]}.

We recall that the Cartan subalgebra of $\mathfrak{sl}_d(\mathbb{C})$ is $\dot{\mathfrak{h}}=\{\text{trace-zero diagonal matrices in} \; \mathfrak{gl}_d(\mathbb{C})\}$. Let $\alpha_1, \cdots, \alpha_{d-1} \in \dot{\mathfrak{h}}^*$ be the standard simple roots and let $\alpha_1^{\vee}, \cdots ,\alpha_{d-1}^{\vee} \in \dot{\mathfrak{h}}$ be the coroots of the finite-dimensional Lie algebra $\mathfrak{sl}_d(\mathbb{C})$.

We denote by
\[
\dot{\Delta} \subset \dot{\mathfrak{h}}^*, \quad \dot{Q}=\bigoplus_{i=1}^{d-1}\mathbb{Z}\alpha_i, \quad \dot{Q}^+=\bigoplus_{i=1}^{d-1}\mathbb{N}\alpha_i
\]
the corresponding root system, root lattice and positive root lattice, respectively. Let $\mathcal{W}_0$ be the Weyl group of $\mathfrak{sl}_d(\mathbb{C})$, generated by reflections $r_{\alpha_i}$.

We now define \[\mathfrak{h} = \dot{\mathfrak{h}} \oplus \sum_{i=1}^n \mathbb{C}c_i \oplus \sum_{i=1}^n \mathbb{C}d_i\] where $c_i = \langle t_i,t_i^{-1}\rangle$ are the central elements and $d_i$ are degree derivations introduced earlier. Hence $\dim \mathfrak{h} = 2n+d-1$. 
We now define the elements $\delta_i, \omega_i \in \mathfrak{h}^*$, $1\leq i \leq n$ by
\[
\delta_i(\dot{\mathfrak{h})} = 0, \quad \delta_i(d_j) = \delta_{ij}, \quad \delta_i(c_j) = 0,
\]
\[
\omega_i(\dot{\mathfrak{h}})=0, \quad \omega_i(d_j)=0, \quad \omega_i(c_j) = \delta_{ij}.
\] 
Then $\{\alpha_i,\delta_j,\omega_j \mid 1\leq i \leq d-1, \; 1 \leq j \leq n\}$ forms a basis of $\mathfrak{h}^*$.

Let $(\;,\;)$ be the standard symmetric bilinear form on $\dot{\mathfrak{h}}^*$, normalized by $(\alpha_i,\alpha_i)=2$. We extend it to $\mathfrak{h}^*$ by
\[
(\dot{\mathfrak{h}}^*,\delta_i) = 0, \quad (\dot{\mathfrak{h}}^*,\omega_i)=0, \quad (\delta_i,\delta_j)=0, \quad (\omega_i,\omega_j)=0, \quad (\delta_i,\omega_j)=\delta_{ij}.
\]
This form is nondegenerate and $\mathcal{W}$-invariant.

For $m=(m_1, \cdots ,m_n) \in \mathbb{Z}^n$, we define
\[
\delta_m = \sum m_i\delta_i, \quad (\delta_m,\delta_n)=0,
\]
where $\delta_m$ is called a \emph{null root}.

The set of \emph{real roots} is denoted by $\Delta^{\mathrm{re}}=\{\alpha+\delta_m \mid \alpha\in \dot{\Delta}, \; m \in \mathbb{Z}^n\}$ and the set of \emph{null roots} is denoted by $\Delta^{\mathrm{null}}= \{\delta_m \mid m\in \mathbb{Z}^n\}$. Then the root system is $\Delta= \Delta^{\mathrm{re}}\cup \Delta^{\mathrm{null}}$. For each real root $\gamma=\alpha+\delta_m$, we define its \emph{coroot} $\gamma^\vee=\alpha^\vee + \sum_{i=1}^{n}m_ic_i$. Then $(\gamma,\gamma)=2$ and $\gamma(\gamma^\vee)=2$.

Let $\dot{\mathfrak{g}} = \mathfrak{sl}_d(\mathbb{C})$ with decomposition $\dot{\mathfrak{g}} = \bigoplus_{\alpha\in\dot{\Delta}\cup\{0\}}\dot{\mathfrak{g}}_{\alpha}$.\\
We define $\tilde{\tau}_{\alpha+\delta_m} = \dot{\mathfrak{g}}_{\alpha}\otimes \mathbb{C}t^m$ and $$\tilde{\tau}_{\delta_m} = 
\begin{cases}
    \dot{\mathfrak{h}}\otimes\mathbb{C}t^m \oplus I\otimes\mathbb{C}t^m  &  m\in\mathbb{Z}^n\setminus \operatorname{rad} f,\\
    \mathfrak{h'}\otimes \mathbb{C}t^m \oplus HC_1(\mathbb{C}_q)_m  &  m \in \operatorname{rad} f \setminus \{0\}.\\
\end{cases}$$
Then
\[
\hat{\tau}(d,q) = \mathfrak{h}\oplus\bigoplus_{\alpha\in\Delta}\tilde{\tau}_\alpha
\]
is the corresponding root space decomposition of $\hat{\tau}(d,q)$.

For any real root $\gamma\in \Delta^{\mathrm{re}}$, we define the reflection on $\mathfrak{h}^*$ by $r_\gamma(\lambda) = \lambda-\lambda(\gamma^\vee)\gamma$, $\lambda\in\mathfrak{h^*}$.
The group generated by these reflections is the Weyl group $\mathcal{W}$. It can be easily verified that the form on $\mathfrak{h^*}$ is $\mathcal{W}$-invariant.

For $\lambda, \mu \in \dot{\mathfrak{h}}^*$ we write $\lambda \le \mu \Leftrightarrow \mu -\lambda \in \dot{Q}^+$, i.e. if and only if $\mu -\lambda$ is a non-negative integer combination of the simple roots.

\begin{definition}
    A module $V$ over $\hat{\tau}(d,q)$ is called integrable if 
    \begin{enumerate}
        \item ($V$ is weight module) $V = \bigoplus_{\lambda \in \mathfrak{h}^*} V_\lambda \; \text{where} \; V_\lambda= \{v \in V \mid h\cdot v=\lambda(h)v, \;\forall h \in \mathfrak{h}\}$,
        \item for every $\alpha\in\dot{\Delta},\; m\in \mathbb{Z}^n,\; v\in V, \; \exists\; k=k(\alpha,m,v)$ such that $(\dot{\mathfrak{g}}_\alpha\otimes t^m)^k\cdot v=0$.
    \end{enumerate}
\end{definition}

We denote by $P(V)$ the set of all weights for a weight module $V$. Then we have the following standard lemma.

\begin{lemma}\label{lemma 3.2} {\normalfont(\cite[Lemma 3.2]{[18]})} \hfill \break
    Let $V$ be an irreducible integrable module for $\hat{\tau}(d,q)$. Then:
    \begin{enumerate}
        \item $P(V)$ is $\mathcal{W}$-invariant;
        
        \item $\dim V_\lambda = \dim V_{\omega\lambda}$ for all $w \in \mathcal{W}$ and $\lambda \in P(V)$.
        
        \item $\lambda(\alpha^\vee)\in \mathbb{Z}$ for all $\alpha \in \Delta^{\mathrm{re}}, \; \lambda \in P(V)$.
        
        \item For $\alpha\in \Delta^{re},\; \lambda\in P(V)$ with $\lambda(\alpha^\vee)>0,$ we have $\lambda-\alpha\in P(V)$.
        
        \item For all $\lambda\in P(V),\; \lambda(c_i)$ is a constant integer.
        
    \end{enumerate}
\end{lemma}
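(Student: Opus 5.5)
The plan is to reduce everything to the representation theory of $\mathfrak{sl}_2$-triples sitting inside $\hat{\tau}(d,q)$, one for each real root, as in the affine case (\cite{[8]}). Fix a real root $\gamma=\alpha+\delta_m$ with $\alpha\in\dot{\Delta}$. Choose $X_\alpha\in\dot{\mathfrak{g}}_\alpha$ and $X_{-\alpha}\in\dot{\mathfrak{g}}_{-\alpha}$ with $[X_\alpha,X_{-\alpha}]=\alpha^\vee$ and $\operatorname{Tr}(X_\alpha X_{-\alpha})=1$. Set $e=X_\alpha\otimes t^m$ and $f=X_{-\alpha}\otimes (t^m)^{-1}$. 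By \cref{lemma 2.2}(4), $f$ is a scalar multiple of $X_{-\alpha}\otimes t^{-m}$, so it lies in $\tilde{\tau}_{-\gamma}$.

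First step: compute $[e,f]$ from the bracket in $\tilde{\tau}(d,q)$. Since $t^m(t^m)^{-1}=(t^m)^{-1}t^m=1$ and $0\in\operatorname{rad}f$, we get
\[
[e,f]=\alpha^\vee\otimes 1+\langle t^m,(t^m)^{-1}\rangle .
\]
Applying \cref{lemma 2.2}(2) with $b=0$ gives $\langle t^m,(t^m)^{-1}\rangle=\sum_i m_i\langle t_i,t_i^{-1}\rangle=\sum_i m_ic_i$. Hence $[e,f]=\gamma^\vee$. The root space decomposition then gives $[\gamma^\vee,e]=\gamma(\gamma^\vee)e=2e$ and $[\gamma^\vee,f]=-2f$, so $\{e,f,\gamma^\vee\}$ spans a copy of $\mathfrak{sl}_2$, which we call $\mathfrak{s}_\gamma$.

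Second step: show that $V$ is locally finite for $\mathfrak{s}_\gamma$. Integrability makes $e$ and $f$ (both of the form $\dot{\mathfrak{g}}_{\pm\alpha}\otimes$ monomial) locally nilpotent. Because $\operatorname{ad}e$ and $\operatorname{ad}f$ are nilpotent on $\hat{\tau}(d,q)$, the standard argument shows that the set of vectors generating a finite-dimensional $\mathfrak{s}_\gamma$-submodule is a nonzero $\hat{\tau}(d,q)$-submodule. By irreducibility of $V$, this set is all of $V$. Therefore $V$ is a direct sum of finite-dimensional $\mathfrak{s}_\gamma$-modules, compatible with the weight decomposition since $\gamma^\vee\in\mathfrak{h}$. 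Part (3) follows at once: on $V_\lambda$ the element $\gamma^\vee$ acts by $\lambda(\gamma^\vee)$, which is an eigenvalue of $h$ in a finite-dimensional $\mathfrak{sl}_2$-module and hence an integer. Part (4) is the $\mathfrak{sl}_2$ fact that if $v\in V_\lambda$ has positive $h$-weight then $fv\neq 0$, which lands in $V_{\lambda-\gamma}$.

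Third step: for parts (1) and (2), use $\sigma_\gamma=\exp(e)\exp(-f)\exp(e)$. This operator is well defined by local nilpotence and invertible. It sends $V_\lambda$ into $V_{r_\gamma\lambda}$: on $\dot{\mathfrak{h}}+\sum\mathbb{C}c_i$ this is the usual $\mathfrak{sl}_2$ computation, and for $d_j$ one checks $\sigma_\gamma d_j\sigma_\gamma^{-1}=d_j-m_j\gamma^\vee$, which is consistent with $r_\gamma$. Hence $\dim V_\lambda\le\dim V_{r_\gamma\lambda}$, and applying the same argument to $r_\gamma\lambda$ gives equality. Iterating over the generators of $\mathcal{W}$ yields (1) and (2).

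Part (5): each $c_i$ is central, so by Schur's lemma (weight spaces are finite-dimensional and $V$ is irreducible) it acts by a scalar, the same on every $V_\lambda$. Taking $\gamma=\alpha+\delta_{e_i}$ gives $\lambda(\gamma^\vee)=\lambda(\alpha^\vee)+\lambda(c_i)$, and both $\lambda(\gamma^\vee)$ and $\lambda(\alpha^\vee)$ are integers by (3), so $\lambda(c_i)\in\mathbb{Z}$.

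The main obstacle is the first step: verifying in the quantum setting that $[e,f]$ equals exactly $\gamma^\vee$. This depends on the normalization of $\langle\,,\rangle$ and on choosing $f$ with $(t^m)^{-1}$ rather than $t^{-m}$. I would handle it through \cref{lemma 2.2}(2),(4); the rest is classical.
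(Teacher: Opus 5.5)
Your argument is correct. It is the standard $\mathfrak{sl}_2$-triple argument behind the cited result \cite[Lemma 3.2]{[18]}; the paper gives no proof of its own beyond that citation, and the one quantum-specific point, $[X_\alpha\otimes t^m, X_{-\alpha}\otimes (t^m)^{-1}]=\gamma^\vee$ via \cref{lemma 2.2}(2),(4), is handled correctly. Two steps can be shortened: the paper's definition of integrability already makes $e$ and $f$ locally nilpotent on all of $V$, so the detour through nilpotence of $\operatorname{ad}e$, $\operatorname{ad}f$ is unnecessary, and in (5) centrality of $c_i$ alone makes $\ker(c_i-\lambda(c_i))$ a nonzero submodule, so Schur's lemma is not needed.
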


\begin{lemma} \label{lemma 3.3}
    Let $V$ be an irreducible integrable module with finite-dimensional weight spaces for $\hat{\tau}(d,q)$. Then there exists $ \lambda \in P(V) \text{ such that } V_{\lambda+\eta}=0 \; \text{ for all } \eta \in \dot{Q}_+\setminus \{0\}$.  
\end{lemma}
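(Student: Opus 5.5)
Fix a weight $\mu\in P(V)$ and consider the slice
\[
S_\mu = \{\lambda\in P(V) \mid \lambda-\mu\in \dot{Q}\},
\]
that is, the weights differing from $\mu$ only in the $\dot{\mathfrak{h}}^*$-direction, with the same $\delta$-part and the same central part. The plan is to show that the $\dot{\mathfrak{h}}^*$-components of the weights in $S_\mu$ form a finite set. Once this holds, $S_\mu$ is finite, so it has a maximal element $\lambda$ for the partial order $\le$. For any $\eta\in\dot{Q}_+\setminus\{0\}$ the weight $\lambda+\eta$ again differs from $\mu$ by an element of $\dot{Q}$. Hence $\lambda+\eta\in P(V)$ would force $\lambda+\eta\in S_\mu$ with $\lambda+\eta>\lambda$, contradicting maximality. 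Therefore $V_{\lambda+\eta}=0$, which is the statement of \cref{lemma 3.3}.

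To get finiteness, I use the action of the finite Weyl group $\mathcal{W}_0\subset\mathcal{W}$, generated by the $r_{\alpha_i}$ with $\alpha_i\in\dot\Delta$ regarded as real roots with $m=0$. These reflections fix all $\delta_j,\omega_j$ and preserve $S_\mu$. By \cref{lemma 3.2}(3), every $\lambda\in P(V)$ is integral on each $\alpha_i^\vee$, so the restriction $\bar\lambda=\lambda|_{\dot{\mathfrak{h}}}$ lies in the integral weight lattice of $\mathfrak{sl}_d(\mathbb{C})$. By \cref{lemma 3.2}(1) each $\mathcal{W}_0$-orbit meets $S_\mu$ in a dominant weight.

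I then look at the $\mathfrak{sl}_d(\mathbb{C})=\dot{\mathfrak{g}}\otimes 1$-submodule
\[
M=\bigoplus_{\lambda\in S_\mu}V_\lambda .
\]
This is a weight module for $\dot{\mathfrak{g}}$ that is integrable: $\dot{\mathfrak{g}}_\alpha\otimes 1$ acts locally nilpotently by the integrability condition with $m=0$. Its $\dot{\mathfrak{h}}$-weight spaces are finite-dimensional, since each is a single $V_\lambda$. A locally finite $\mathfrak{sl}_d$-module is a sum of finite-dimensional irreducibles, so $M$ is a direct sum of finite-dimensional irreducible $\dot{\mathfrak{g}}$-modules.

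The main obstacle is ruling out infinitely many distinct dominant weights in $S_\mu$, i.e.\ showing that $M$ has only finitely many isotypic types. Finite-dimensionality of weight spaces alone does not immediately give this. My first attempt is the following. All highest weights $\bar\lambda$ of components of $M$ lie in the single coset $\bar\mu+\dot{Q}$. If $\bar\lambda$ is a dominant weight in that coset, then every weight $\le\bar\lambda$ in the coset that is dominant occurs as a weight of the irreducible module $L(\bar\lambda)$. In particular, the unique minimal dominant weight $\bar\lambda_0$ of the coset occurs in every component. Thus $\dim V_{\mu'}$, where $\mu'\in S_\mu$ has $\dot{\mathfrak{h}}$-part $\bar\lambda_0$, bounds the number of irreducible components of $M$. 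That dimension is finite, so $M$ has finitely many components and hence finitely many weights. If the coset's minimal dominant weight subtlety causes trouble, I would fall back on using $\mathcal{W}_0$-conjugation to move any weight of $S_\mu$ into a fixed dominant chamber and argue via the same fixed weight space.
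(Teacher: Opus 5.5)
Your proof is correct, but it reaches the conclusion by a different mechanism than the paper. Both arguments share the same setting and the same key fact:
\begin{enumerate}
\item They work in the $\dot{\mathfrak g}$-module obtained by fixing the $d$-eigenvalues (the paper's $V_{\underline\lambda}$ is essentially your $M$) and decompose it into finite-dimensional irreducibles.
\item They use that a dominant weight lying below the highest weight $\Lambda$ occurs in $L(\Lambda)$.
\end{enumerate}

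The paper applies this fact to an arbitrary dominant weight $\lambda_0$ of the slice. It uses finite-dimensionality of $\mathcal{U}(\mathfrak{n}^+)V_{\lambda_0}$ to pick $\eta\in\dot Q_+$ maximal with $\mathcal{U}(\mathfrak{n}^+)_\eta V_{\lambda_0}\neq 0$. It then argues that any component having a weight above $\lambda_0+\eta$ would have highest weight $\Lambda\ge\lambda_0$. Such a component would meet $V_{\lambda_0}$, producing a nonzero vector in $\mathcal{U}(\mathfrak{n}^+)_{\Lambda-\lambda_0}V_{\lambda_0}$ with $\Lambda-\lambda_0>\eta$, a contradiction.

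You apply the fact instead to the least dominant weight $\bar\lambda_0$ of the coset $\bar\mu+\dot Q$. This weight lies below every highest weight occurring in $M$, so a single finite-dimensional weight space bounds the number of components.

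What each approach buys:
\begin{enumerate}
\item Your route proves more than the lemma asks: the entire slice $M$ is finite-dimensional, so any maximal element of $S_\mu$ works.
\item The paper's version only controls the part of the slice above $\lambda_0$, but it needs no information about cosets.
\end{enumerate}

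Your one extra input is that each coset of the integral weight lattice modulo $\dot Q$ contains a dominant weight below all other dominant weights of that coset. This follows from uniqueness of the minimal dominant weight in a coset together with finiteness of the set of dominant weights below a given one. For $\mathfrak{sl}_d(\mathbb{C})$ this weight is $0$ or a fundamental weight. Since this is standard, your fallback plan is not needed.

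As in the paper, the step from integrability of $M$ to complete reducibility deserves a one-line justification. By PBW and local nilpotence of the finitely many root vectors $\dot{\mathfrak g}_\alpha\otimes 1$, every vector generates a finite-dimensional $\dot{\mathfrak g}$-submodule.
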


\begin{proof}
    The proof is on the similar lines as in the proof of ({\cite[Proposition 2.4(ii)]{[4]}}), ({\cite[Proposition 2.12]{[12]}}) because we have the same Weyl group. Let $\lambda \in P(V)$ and $\underline{\lambda} = (\lambda(d_1), \cdots ,\lambda(d_n))$ and define $V_{\underline{\lambda}} = \{v \in V_\mu \mid \mu(d_i) = \lambda(d_i),\; 1 \leq i \leq n\}$. Since the $d_i$ act semisimply and commute, we have
\[
V = \bigoplus_{\underline{m}\in\mathbb{Z}^n} V_{\underline{\lambda}+\underline{m}},
\]
and each $V_{\underline{\lambda}+\underline{m}}$ is a $\dot{\mathfrak g}$-submodule. 
In particular, $V_{\underline{\lambda}}$ is an integrable $\dot{\mathfrak{g}}$-module with finite-dimensional weight spaces and hence decomposes into a direct sum of finite-dimensional irreducible $\dot{\mathfrak{g}}$-modules.
    
    Let $P(V_{\underline{\lambda}})=\{\mu \in P(V) \mid V_\mu \subseteq
     V_{\underline{\lambda}}\}$ then $P(V_{\underline{\lambda}}) \subset P(V)$.
     By \cref{lemma 3.2}, we may choose $\lambda_0 \in P(V_{\underline{\lambda}}) \; \text{such that } \; \lambda_0(\alpha_i^\vee) \in \mathbb{Z}^+, \; \forall i=1,\cdots ,n$.
     Since $\dim V_{\lambda_0}$ is finite, therefore the subspace $\mathcal{U}(\mathfrak{n}^+)V_{\lambda_0} $ is finite-dimensional. This implies that there exists an element $\eta \in \dot{Q}_+$ such that 
     \begin{equation} \label{equation *}
         \mathcal{U}(\mathfrak{n}^+)_\eta V_{\lambda_0} \neq 0, \quad \mathcal{U}(\mathfrak{n}^+)_{\eta'} V_{\lambda_0} = 0 \quad \forall \eta' > \eta.
     \end{equation}
     We say $V_{\lambda_0+\eta+\mu} =0 \; \text{for all} \; \mu \in \dot{Q}_+ \setminus \{0\}$, otherwise $V_{\lambda_0 + \eta + \mu} \neq 0 \; \text{for some} \; \mu \in \dot{Q}_+ \setminus \{0\}$. Choose $0\ne v\in V_{\lambda_0+\eta'+\mu}$ with 
$\mathfrak n^+\!\cdot v=0$, where $\eta'>\eta$.  
Then $M=\mathcal{U}(\dot{\mathfrak g})v$ is a finite-dimensional irreducible 
$\dot{\mathfrak g}$–module with highest weight 
$\lambda_0+\eta'+\mu$.  
By Lemma~2.7 of \cite{[4]}, $M\cap V_{\underline{\lambda}}\ne0$.  
Let $w\in M\cap V_{\underline{\lambda}}$ be a nonzero weight vector of weight $\lambda_0$. Since $M=\mathcal{U}(\dot{\mathfrak g})w$ and 
$\mathrm{wt}(v)-\mathrm{wt}(w)=\eta'+\mu\in\dot Q_+$, 
we have 
$v\in\mathcal U(\mathfrak n^+)_{\eta'+\mu}V_{\lambda_0}$, 
contradicting \eqref{equation *}. Hence $V_{\lambda_0 + \eta + \mu } =0 \; \text{for all} \; \mu \in \dot{Q}_+ \setminus\{0\}$.    
\end{proof}

We define $P^+(V)= \{\lambda \in P(V) \mid V_{\lambda+ \eta} =0, \; \forall \, \eta \in \dot{Q}_+ \setminus \{0\}\}$, then by \cref{lemma 3.3}, $P^+(V)$ is non-empty.

\section{action of central extension part}
In this section, we will show that if each central element $c_i \; (1 \leq i \leq n)$ acts trivially on an irreducible integrable $\hat{\tau}(d,q)$-module $V$ with finite-dimensional weight spaces, then the entire central extension $HC_1(\mathbb{C}_q)$ acts trivially on $V$.

Let $V$ be such a module. Throughout the paper, we set $\mathfrak{g} = \mathfrak{gl}_d(\mathbb{C})$ with the triangular decomposition
$\mathfrak{g} = N^-\oplus H \oplus N^+$,
where
$N^-$ is the subalgebra of strictly lower triangular matrices,
$H$ is the Cartan subalgebra consisting of diagonal matrices and
$N^+$ is the subalgebra of strictly upper triangular matrices.

\begin{proposition}\label{proposition 4.1}
    Suppose $c_i, \; 1 \leq i \leq n$ acts trivially on V. Then there exists non-zero $v,w \in V \; \text{such that} \; N^+ \otimes \mathbb{C}_q \cdot v =0, \; N^- \otimes \mathbb{C}_q \cdot w =0.$
\end{proposition}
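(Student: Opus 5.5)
Take $\lambda\in P^+(V)$, which exists by \cref{lemma 3.3}, and look for $v$ inside the finite-dimensional space $V_\lambda$. The point is that once the central elements act by zero, the coroot $\gamma^\vee=\alpha^\vee+\sum m_ic_i$ of the real root $\gamma=\alpha+\delta_m$ acts on $V$ exactly as $\alpha^\vee$ does. This is what makes the grade-shifted root vectors behave like the finite ones.

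First I would show that $E_{ij}\otimes t^m$ kills $V_\lambda$ for every $i<j$ and every $m\in\mathbb{Z}^n$. Suppose $0\neq u=(E_{ij}\otimes t^m)\cdot v\in V_{\lambda+\alpha+\delta_m}$ for some $v\in V_\lambda$, where $\alpha=\epsilon_i-\epsilon_j\in\dot\Delta^+$. Set $\mu=\lambda+\alpha+\delta_m$ and $\gamma=\alpha+\delta_m$. Since $c_k$ acts trivially, $\mu(\gamma^\vee)=\mu(\alpha^\vee)=\lambda(\alpha^\vee)+2$. Because $\lambda\in P^+(V)$, the weight $\lambda$ restricted to $\dot{\mathfrak h}$ is dominant: all $\alpha$-strings through $\lambda$ lie below $\lambda$, so $\lambda(\alpha^\vee)\ge 0$. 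Hence $\mu(\gamma^\vee)\ge 2>0$. By \cref{lemma 3.2}(1), $r_\gamma(\mu)=\mu-\mu(\gamma^\vee)\gamma\in P(V)$. Its $\dot{\mathfrak h}$-part is $\lambda+\alpha-(\lambda(\alpha^\vee)+2)\alpha$, and the same weight is obtained by reflecting in $\alpha-\delta_m$ instead. I would rather use the $\mathfrak{sl}_2$-triple $\{E_{ij}\otimes t^m,\ E_{ji}\otimes t^{-m},\ \alpha^\vee\}$ directly, which is an honest $\mathfrak{sl}_2$ modulo $c$-terms that vanish on $V$. Integrability makes $V$ a locally finite module over it, and $u$ is a weight vector of $\alpha^\vee$-weight $\lambda(\alpha^\vee)+2$. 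Applying $\mathfrak{sl}_2$-theory to $u$, lowering by $E_{ji}\otimes t^{-m}$ returns us to $d$-degree $\underline\lambda$. Lowering once more by $E_{ji}\otimes t^{-m}$ and then raising by a suitable element $E_{ij}\otimes t^{0}$, we aim to produce a nonzero vector of weight $\lambda+\alpha$, contradicting $\lambda\in P^+(V)$. The cleanest version works by Weyl-group invariance of dimensions. Choose $w\in\mathcal W$ with $w$ fixing $\underline\lambda$-degree data and mapping $\mu$ to a weight whose $\dot{\mathfrak h}$-part is $\lambda+\alpha$ and whose $\delta$-part is $\delta_0$. Composing $r_{\alpha+\delta_m}$ with $r_\alpha$ gives the translation $\mu\mapsto\mu-\mu(\alpha^\vee)\delta_m+\dots$ in the null direction, because the $c$-values of $\mu$ are zero. \cref{lemma 3.2}(2) then gives $V_{\lambda+\alpha}\neq0$, which contradicts $\lambda\in P^+(V)$. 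Hence $(N^+\otimes\mathbb{C}_q)V_\lambda=0$ on root vectors, and so on all of $N^+\otimes\mathbb C_q$. Since $N^+\otimes \mathbb{C}_q$ is spanned by the $E_{ij}\otimes t^m$, any nonzero $v\in V_\lambda$ works.

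The key computation, and the main obstacle, is this translation argument. With $\mu(c_k)=0$ we get $r_\alpha r_{\alpha+\delta_m}(\nu)=\nu+\nu(\alpha^\vee)\delta_m$ up to sign conventions, and this must be checked carefully. Once it holds, $\mu$ and $\mu-(\lambda(\alpha^\vee)+2)\delta_m\cdot(\pm1)$ are $\mathcal W$-conjugate. Combining with $r_\alpha$ if necessary, we get a weight with $\dot{\mathfrak h}$-part $\lambda+\alpha$ and $d$-degree exactly $\underline\lambda$. Repeated translations by multiples of $\delta_m$ shift the degree by multiples of $\mu(\alpha^\vee)\,m$. The null part $\delta_m$ added by $u$ therefore has to be cancelled exactly, which needs $\mu(\alpha^\vee)=\pm1$ in the correct combination. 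If this fails, my fallback is the following. Every $\mathcal W$-translate of $\lambda+\alpha+\delta_m$ has $\dot{\mathfrak h}$-part in the $\mathcal W_0$-orbit of $\lambda+\alpha$ or of $\lambda+\alpha-k\alpha$. Using the translations by $\delta_{km}$ for all $k$, the weights $\lambda+\alpha+\delta_{m+k\mu(\alpha^\vee)m}$ all lie in $P(V)$. Then I would use \cref{proposition 2.4} and the finite-dimensionality of the spaces $V_{\underline\lambda+\underline m}$ (each being an integrable $\dot{\mathfrak g}$-module) to rule out a dominant $\dot{\mathfrak h}$-weight $\lambda+\alpha$ strictly above $\lambda$ in the relevant graded piece. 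This requires $\lambda$ to be chosen maximal among all degrees, not just within $V_{\underline\lambda}$, which is what the definition of $P^+(V)$ provides.

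For $w$, apply the same argument with the opposite Borel. By \cref{lemma 3.2}(1)–(2), $w_0\lambda$ is a lowest weight in the analogous sense: $V_{w_0\lambda-\eta}=0$ for $\eta\in\dot Q_+\setminus\{0\}$. The argument with $N^-$ replacing $N^+$ then gives any nonzero $w\in V_{w_0\lambda}$.
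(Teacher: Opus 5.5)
\textbf{There is a genuine gap.} The intermediate claim you try to prove, that $(N^+\otimes\mathbb{C}_q)\cdot V_\lambda=0$ for an arbitrary $\lambda\in P^+(V)$, is false.

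The condition $V_{\lambda+\eta}=0$ for $\eta\in\dot Q_+\setminus\{0\}$ only concerns weights with the same $d$-eigenvalues as $\lambda$; that is all \cref{lemma 3.3} produces. So nothing prevents $V_{\lambda+\alpha+\delta_m}\neq0$ for $m\neq0$. Your argument uses no property of $q$, so a counterexample with $q_{ij}=1$ and $d=2$ suffices. Let $\mathfrak{sl}_2\otimes A_n$ act on $\mathbb{C}^2\otimes\mathbb{C}^2\otimes A_n$ by $x\otimes t^k\mapsto (x\otimes 1+(-1)^{k_1}\,1\otimes x)\otimes t^k$, with $HC_1$ acting by $0$. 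The span of $\mathrm{Alt}\otimes t^m$ ($m_1$ even) and $\mathrm{Sym}\otimes t^m$ ($m_1$ odd) is an irreducible integrable submodule with finite-dimensional weight spaces. Its degree-$0$ part is the trivial $\mathfrak{sl}_2$-module, so $\lambda=0$ lies in $P^+$. Yet $(E_{12}\otimes t_1)(e_1\otimes e_2-e_2\otimes e_1)=-2\,e_1\otimes e_1\otimes t_1\neq0$.

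This is exactly why your translation step stalls. The available translations $r_\alpha r_{\alpha+\delta_s}$ move the degree by multiples of $\nu(\alpha^\vee)s$. In the example $\nu(\alpha^\vee)=\pm2$, so the degree modulo $2\mathbb{Z}^n$ is $\mathcal W$-invariant and $\delta_m$ can never be cancelled. Your fallback rests on reading $P^+(V)$ as maximality over all degrees. The definition does not say that, and such a globally maximal weight is not available at this stage: finiteness of the $\dot{\mathfrak h}$-parts, \cref{lemma 5.8}, is proved later and depends on $V^+\neq0$.

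\textbf{The missing idea is to allow one move.} If $V_{\lambda+\alpha+\delta_m}\neq0$ for some $\alpha\in\dot\Delta_+$ and $m\in\mathbb{Z}^n$, set $\mu=\lambda+\alpha+\delta_m$. Then show $V_{\mu+\beta+\delta_s}=0$ for all $\beta\in\dot\Delta_+$ and $s\in\mathbb{Z}^n$, as follows.
\begin{itemize}
\item Since $(\alpha+\beta|\alpha)+(\alpha+\beta|\beta)=(\alpha+\beta|\alpha+\beta)>0$, one of the two summands is positive.
\item If $(\alpha+\beta|\alpha)>0$, take $\gamma=\alpha+\delta_{m+s}$. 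Because the $c_i$ act by zero and $\lambda$ is $\dot{\mathfrak h}$-dominant, $(\mu+\beta+\delta_s)(\gamma^\vee)=\lambda(\alpha^\vee)+(\alpha+\beta|\alpha)>0$. Then \cref{lemma 3.2}(4) gives $\lambda+\beta\in P(V)$.
\item If $(\alpha+\beta|\beta)>0$, take $\gamma=\beta+\delta_{m+s}$ instead, which gives $\lambda+\alpha\in P(V)$.
\item Both resulting weights have the same degree as $\lambda$, so both are excluded by $\lambda\in P^+(V)$.
\end{itemize}
Hence any nonzero vector of $V_\mu$ is killed by $N^+\otimes\mathbb{C}_q$; if no such $\mu$ exists, any nonzero vector of $V_\lambda$ is. This is the paper's argument.

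Your treatment of $w$ has the same defect and the same repair. Start from $w_0\lambda$, which is minimal in its degree, and pass to $w_0\lambda-\alpha+\delta_m$ if that is a weight.
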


\begin{proof}
    Let $\lambda \in P^+(V)$ then $V_{\lambda + \eta} =0 \; \text{for all} \; \eta \in \dot{Q}_+ \setminus\{0\}.$ If $V_{\lambda + \alpha + n\delta} =0 \; \text{for all} \; \alpha \in \dot{\Delta}_+$ and for all $n \in \mathbb{Z} $ then the Proposition follows.
    
    Let $V_{\lambda + \alpha + n\delta} \neq 0$ for some $\alpha \in \dot{\Delta}_+ \; and \; n \in \mathbb{Z}. \; \text{Set} \; \mu = \lambda + \alpha + n\delta$, then $V_{\mu + \beta + s\delta} =0 \; \text{for all} \; \beta \in \dot{\Delta}_+ \; \text{and all} \; s \in \mathbb{Z}$. Since $\alpha, \beta \in \dot{\Delta}_+$, therefore $(\alpha + \beta \mid \alpha) \text{ or } (\alpha + \beta \mid \beta)$ is positive. Suppose $(\alpha + \beta \mid \alpha) >0$. Set $\gamma = \alpha + (s + n)\delta$, then $(\mu + \beta + s\delta \mid \gamma) >0$ and hence by {\cite[Lemma 2.2]{[4]}} $V_{\mu + \beta + s\delta - \gamma} = V_{\lambda + \beta} \neq 0$ contradicting the fact that $\lambda \in P^+(V)$, therefore $N^+ \otimes \mathbb{C}_q\cdot V_\mu =0.$\\
    Similarly we can prove $N^- \otimes \mathbb{C}_q\cdot w =0$ for some $w \in V$.
\end{proof}

\begin{proposition}\label{proposition 4.2}
    Let V be an irreducible integrable module for $\hat{\tau}(d,q)$ with finite-dimensional weight spaces. Let k be as in \cref{proposition 2.4} above. Suppose $k \geq 1 \; \text{and} \; c_i=0 \; \forall \; i=1, \cdots,n$; then such a module V does not exist.
\end{proposition}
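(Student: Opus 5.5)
We argue by contradiction: suppose $V$ exists with $k\ge 1$ and all $c_i$ acting by zero. By \cref{proposition 2.4} there is a nonzero central operator $z_1$ of degree $(m_1,0,\dots,0)$ with $m_1>0$. By \cref{proposition 4.1} there is a nonzero $v$, which we take to be a weight vector of weight $\lambda\in P^+(V)$, with $(N^+\otimes\mathbb{C}_q)\cdot v=0$.

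\textbf{Step 1: powers of $z_1$ give an infinite family of highest vectors.} Since $z_1$ commutes with $\tilde\tau(d,q)$ and $V$ is irreducible, $z_1$ is injective: its kernel is a $\hat\tau(d,q)$-submodule, because $d_iz_1=z_1(d_i+r_i)$. The vector $z_1^jv$ is nonzero for every $j\in\mathbb{Z}_+$. It has weight $\lambda+j m_1\delta_1$, and it is still annihilated by $N^+\otimes\mathbb{C}_q$. Its $\dot{\mathfrak h}$-weight equals that of $v$, and the central elements act by zero on it.

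\textbf{Step 2: reflect using a real root with a $\delta_1$-component.} Let $\theta=\lambda|_{\dot{\mathfrak h}}$. Choose a simple root $\alpha$ with $\theta(\alpha^\vee)\ge0$. If $\theta=0$ we argue separately; see below. Take the real root $\gamma=-\alpha+\delta_{p e_1}$ for suitable $p\in\mathbb{Z}$. Because the $c_i$ act trivially,
\[
\gamma^\vee=-\alpha^\vee+p\,c_1
\]
acts on $V_{\lambda+jm_1\delta_1}$ by $-\theta(\alpha^\vee)$, independently of $j$. Reflections by $-\alpha+\delta_{pe_1}$ and by $\alpha$ generate translations
\[
\mu\mapsto \mu+\mu(\alpha^\vee)\,p\,\delta_1,
\]
since $\mu(c_1)=0$. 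These translations shift the $d_1$-grading while preserving $\theta$, so they alone give no contradiction.

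\textbf{Step 3: the real contradiction.} The key tension is that $z_1$ pushes the $d_1$-degree up indefinitely while preserving the highest-weight condition. We aim to show that this forces a nonzero $c_1$-eigenvalue.

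Consider the weight $\lambda-\alpha+s\delta_1$, which lies in $P(V)$ when $\theta(\alpha^\vee)>0$ by \cref{lemma 3.2}(4). By \cref{proposition 2.4}(3), $\langle t_1, t^r t_1^{-1}\rangle$ acts by zero, since $1\le k$. The idea, following the commuting case in \cite{[4]} and \cite{[12]}, is to pass to the non-graded quotient $V/(z_1-1)V$. This quotient is a module over the loop-type algebra in $t_1$ with finite-dimensional weight spaces, and it remains integrable and highest weight. For $\mathfrak{sl}_2\otimes\mathbb{C}[t_1^{\pm1}]$ with zero center, such a highest weight vector generates a module on which $E_{-\alpha}\otimes t_1^{s}$ for all $s$ produce vectors in finitely many weight spaces. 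Integrability then forces $\theta(\alpha^\vee)=0$ for every $\alpha$, so $\theta=0$. Repeating the argument with the lowest vector $w$ from \cref{proposition 4.1} gives the same conclusion.

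\textbf{Step 4: the case $\theta=0$.} Then $\dot{\mathfrak g}\otimes\mathbb{C}_q$ acts on $v$ through $\mathfrak{sl}_2$-strings of length one, so $(\mathfrak{sl}_d(\mathbb{C})\otimes\mathbb{C}_q)v=0$. The whole $\tau(d,q)$, generated by these, kills $v$. Hence $\tilde\tau(d,q)$ acts on the irreducible module $V$ by the central part only. This makes $V$ a module for an abelian-plus-$D$ algebra; the central part of $\tilde\tau(d,q)$ acts by zero via \cref{proposition 2.4}(3) and $c_i=0$. So $V$ is one-dimensional and trivial, in which case no nonzero central operator of nonzero degree exists, contradicting $k\ge1$.

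\textbf{Main obstacle.} Step 3 is the hard part: making rigorous that zero central charge together with an infinite $z_1$-tower of highest weight vectors forces $\theta=0$. I would first try to adapt the loop-algebra argument of Chari \cite{[4]}, namely finite-dimensionality of $\mathcal U(N^-\otimes\mathbb{C}[t_1^{\pm1}])v$ modulo $z_1-1$.
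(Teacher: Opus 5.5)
Your Step 3 is not just unverified; the implication it aims at is false under the hypotheses your argument uses. Steps 1--3 use $z_1$ only as an abstract nonzero central operator of degree $(m_1,0,\dots,0)$, together with integrability and $c_i=0$. These do not force $\theta=0$.

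Already for $\mathfrak{sl}_2\otimes\mathbb{C}[t_1^{\pm1}]$ with zero center, finite-dimensional evaluation modules are integrable highest weight modules with arbitrary dominant $\theta$. Finite-dimensionality of $\mathcal{U}(N^-\otimes\mathbb{C}[t_1^{\pm1}])v$ modulo $z_1-1$ is exactly the mechanism the paper uses in Section 5 to pass to a finite-dimensional quotient, and it is compatible with $\theta\neq0$.

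For $\hat\tau(d,q)$ itself, take all $q_{ij}=1$ (a rational case) and a dominant $\lambda\neq0$. Let $V=V(\lambda)\otimes A_n$ with $(X\otimes t^s)(w\otimes t^m)=Xw\otimes t^{s+m}$, with $HC_1(\mathbb{C}_q)$ acting by $0$ and $d_i$ acting by $m_i$. This module is irreducible, integrable, has finite-dimensional weight spaces, and has all $c_i=0$. The shift $w\otimes t^m\mapsto w\otimes t^{m+m_1e_1}$ is a nonzero central operator of degree $(m_1,0,\dots,0)$, yet $\theta=\lambda\neq0$.

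So no argument that treats $z_1$ abstractly can succeed. The statement can only hold with $k\ge1$ read as in the paper's proof: some $\langle t_i,t^rt_i^{-1}\rangle$ with $r\neq0$ acts nontrivially (for $r=0$ this element is $c_i$). In Step 3 you set the $HC_1$-elements aside (noting $\langle t_1,t^rt_1^{-1}\rangle=0$), but they are the only possible source of a contradiction.

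The missing idea, which is the paper's argument, is a Heisenberg algebra whose central element is such a nonzero element of $HC_1(\mathbb{C}_q)$ of nonzero degree. If $\langle t_i,t^rt_i^{-1}\rangle\neq0$ on $V$, then $r\in\operatorname{rad}f$, and by \cref{proposition 2.4}(3) we have $i\ge k+1$ and $r_{k+1}=\cdots=r_n=0$. Being a central operator, it is bijective on $V$. Let $N_i$ be minimal with $t_i^{N_i}\in Z(\mathbb{C}_q)$, and choose $h\in\dot{\mathfrak h}$ with $\operatorname{Tr}(h^2)\neq0$. Consider the elements $h\otimes t^rt_i^{N_il}$ and $h\otimes t_i^{-N_il}$ for $l>0$:
\begin{itemize}
\item Every sum of two of their degrees lies in $\operatorname{rad}f$, so only the $HC_1$-term of each bracket survives.
\item By \cref{lemma 2.2}(3), that term is a combination of elements $\langle t_j,t^st_j^{-1}\rangle$, where $s$ is the sum of the two degrees.
\item Whenever $s_i\neq0$ these act by $0$, by \cref{proposition 2.4}(3) (since $i\ge k+1$). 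This kills every bracket except $[h\otimes t^rt_i^{N_il},h\otimes t_i^{-N_il}]$.
\item For that remaining bracket, $\sum_j r_j\langle t_j,t^rt_j^{-1}\rangle=\langle t^r,1\rangle=0$ by \cref{lemma 2.2}(1),(2), so it is a nonzero multiple of $l\,\langle t_i,t^rt_i^{-1}\rangle$.
\end{itemize}
Hence a Heisenberg algebra with invertible central element acts on $V$. The paper then produces an infinite linearly independent family inside one weight space, each vector being a lowering element times a raising element applied to a fixed $w$.

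Finite-dimensionality of weight spaces alone does not rule out such Heisenberg actions: integrable affine highest weight modules of nonzero level carry them. So this last step is where $c_i=0$ has to enter. No information about $\theta$ is needed, and the $\theta=0$ route of Steps 3--4 should be abandoned.
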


\begin{proof}
    The proof of this Proposition will be parallel to (Proposition 4.13,\cite{[3]}). Here we will construct different Heisenberg Lie algebra.\\
    Let $\langle t_i,t^rt_i^{-1} \rangle \neq 0$ then by \cref{lemma 2.2}, $r \in \operatorname{rad}f$ and by \cref{proposition 2.4}, we have for $i \geq k+1 \text{ and } r_{k+1}= \cdots = r_n =0.$ Since $\mathbb{C}_q$ is rational, choose $N_i$ to be the minimum integer such that $t_i^{N_i} \in Z(\mathbb{C}_q).$ Now consider, $H = \text{Span}_{\mathbb{C}} \{h \otimes t^rt_i^{N_ik}, h \otimes t_i^{-N_ik}, \langle t_i,t^rt_i^{-1} \rangle \mid k >0\}$ the Heisenberg algebra and we construct an infinite set of linearly independent vectors inside a finite weight space $\{Yt_i^{-N_ir} \; Yt_i^{N_ir}w, \; r>0, \; r \neq \pm l_0\}$ which contradicts the fact that each weight space is of finite dimension.
\end{proof}

\begin{lemma}
    $HC_1(\mathbb{C}_q)$ acts trivially on V.
\end{lemma}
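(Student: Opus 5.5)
The approach is to reduce to the spanning elements $\langle t_i, t^r t_i^{-1}\rangle$ and then use the structure theory of \cref{proposition 2.4} together with the non-existence result \cref{proposition 4.2}. Throughout we assume, as in this section, that each $c_i$ acts trivially on $V$.

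\textbf{Reduction to spanning elements.} By \cref{lemma 2.2}(3), every $\langle t^a,t^b\rangle$ with $a+b=r\in\operatorname{rad}f$ equals $\sigma(a,b)\sum_i a_i\langle t_i, t^{r}t_i^{-1}\rangle$. Hence $HC_1(\mathbb{C}_q)$ is spanned by the elements $\langle t_i,t^r t_i^{-1}\rangle$ with $r\in\operatorname{rad}f$ and $1\le i\le n$. It therefore suffices to show that each of these acts by zero on $V$.

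\textbf{Forcing $k=0$.} Let $k$ be the integer from \cref{proposition 2.4}. Since all $c_i$ act trivially, \cref{proposition 4.2} shows that $k\ge1$ is impossible, so $k=0$. Now suppose some $\langle t_i,t^rt_i^{-1}\rangle$ acts nontrivially. With $k=0$, \cref{proposition 2.4}(3) requires $r_1=\cdots=r_n=0$, that is, $r=0$. The element is then $\langle t_i,t_i^{-1}\rangle=c_i$, which acts trivially by hypothesis, a contradiction. Hence every generator $\langle t_i,t^rt_i^{-1}\rangle$ with $r\neq0$ acts by zero, and those with $r=0$ are the $c_i$, which also act by zero. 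Therefore all of $HC_1(\mathbb{C}_q)$ acts trivially.

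\textbf{Main obstacle.} The delicate point is the coordinate change in \cref{proposition 2.4}. The statement of that proposition holds only after an automorphism of $\mathbb{C}_q$, so one must check that the hypothesis ``all $c_i$ act trivially'' survives this change. It does: under a $GL_n(\mathbb{Z})$ change of variables the new elements $\langle t_i',t_i'^{-1}\rangle$ are integer combinations of the old $c_j$, by \cref{lemma 2.2}(2),(3). The center $C$ is therefore preserved as a subspace, and trivial action of $C$ is a coordinate-free condition.

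If one prefers not to rely on \cref{proposition 2.4}(3) for degrees $r\ne0$, a fallback is available. Each $\langle t_i,t^rt_i^{-1}\rangle$ is central in $\tilde\tau(d,q)$ and of degree $r$, so it is a central operator of degree $r$. If it acted nontrivially for some $r\neq0$, it would be injective on the irreducible module $V$, since its kernel is a submodule. This would produce a nonzero central operator of nonzero degree, contradicting $k=0$.
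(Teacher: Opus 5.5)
Your proposal is correct and follows the paper's argument. The paper also gets $k=0$ from \cref{proposition 4.2} and then concludes because each $\langle t^a,t^b\rangle$ with $a+b\in\operatorname{rad}f$ is a central operator of degree $a+b$, which is exactly your fallback paragraph. Your main route through \cref{proposition 2.4}(3), along with your explicit handling of the coordinate change and of the degree-zero elements (combinations of the $c_i$), fills in details that the paper's two-line proof leaves implicit.
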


\begin{proof}
    We recall $HC_1(\mathbb{C}_q) = \{ \langle t^a,t^b \rangle \mid a + b \in \operatorname{rad}f\}$. By \cref{proposition 4.2}, if $V$ is an irreducible integrable module with finite-dimensional weight spaces and $c_i = 0 \; \text{for all} \; i = 1, \cdots ,n$ then the number of central operators acting non-trivially on $V$ is $k = 0$. We note that $\langle t^a,t^b \rangle$ is a central operator of degree $a+b$ if $a + b \in \operatorname{rad}f$,
    hence our lemma.
\end{proof}

\section{highest weight space}
In this section, we introduce the notion of \textit{highest central operators} on the highest weight space $V^+$. We then use these operators to show that $V$ admits a finite-dimensional irreducible quotient as a module over $\mathfrak{g} \otimes \mathbb{C}_q$.
We recall from (1.10, \cite{[18]}) that $\mathfrak{gl}_d(\mathbb{C}) \otimes \mathbb{C}_q = \mathfrak{sl}_d(\mathbb{C}_q) \oplus (I \otimes Z(\mathbb{C}_q))$ and the kernel is central. We can treat any $\mathfrak{sl}_d(\mathbb{C}_q)$ module as a $(\mathfrak{gl}_d(\mathbb{C}) \otimes \mathbb{C}_q)$ module where the additional center $I \otimes Z(\mathbb{C}_q)$ acts trivially.

Let $V$  be an irreducible integrable graded module for $\hat{\tau}(d,q)$ with finite-dimensional weight spaces and assume that $HC_1(\mathbb{C}_q)$ acts trivially on $V$. In general such a module is referred to as a non-graded module for $\tau(d,q)$.

We will establish a one-to-one correspondence between graded and non-graded modules. Starting with a graded integrable module with finite-dimensional weight spaces, we obtain a finite-dimensional non-graded module. It is easy to classify irreducible finite-dimensional non-graded modules and this reduction forms the basis of the classification theory developed in the subsequent sections.

For this purpose, we define $V^+ = \{v \in V \mid N^+ \otimes \mathbb{C}_q \cdot v =0\}$, which is clearly nonzero by \cref{proposition 4.1}. We shall refer to this space as the highest weight space of $V$.

\begin{proposition}\label{proposition 5.2}
\leavevmode
\begin{enumerate}
    \item $V^+$ is an irreducible $(H \otimes \mathbb{C}_q) \oplus D$-module.
    \item $V=\mathcal{U}(N^-\otimes\mathbb{C}_q)V^+$.
\end{enumerate}
\end{proposition}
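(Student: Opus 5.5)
Both parts rest on a PBW decomposition together with the irreducibility of $V$ as a $\hat{\tau}(d,q)$-module. Since $HC_1(\mathbb{C}_q)$ acts trivially and $I\otimes Z(\mathbb{C}_q)$ may be taken to act trivially, $V$ is an irreducible module for $(\mathfrak{g}\otimes\mathbb{C}_q)\oplus D$. This algebra has the triangular decomposition
\[
(N^-\otimes\mathbb{C}_q)\oplus\bigl((H\otimes\mathbb{C}_q)\oplus D\bigr)\oplus(N^+\otimes\mathbb{C}_q).
\]
Each piece is a subalgebra, because $N^\pm\otimes\mathbb{C}_q$ is closed under $[X\otimes t^a,Y\otimes t^b]=XY\otimes t^at^b-YX\otimes t^bt^a$ and $[H\otimes\mathbb{C}_q,N^\pm\otimes\mathbb{C}_q]\subset N^\pm\otimes\mathbb{C}_q$. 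PBW then gives
\[
\mathcal{U}=\mathcal{U}(N^-\otimes\mathbb{C}_q)\,\mathcal{U}\bigl((H\otimes\mathbb{C}_q)\oplus D\bigr)\,\mathcal{U}(N^+\otimes\mathbb{C}_q).
\]
I also note that $V^+$ is stable under $(H\otimes\mathbb{C}_q)\oplus D$: these elements normalize $N^+\otimes\mathbb{C}_q$, and $V^+$ is a sum of weight spaces because $N^+\otimes\mathbb{C}_q$ is spanned by $\mathfrak{h}$-weight vectors.

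For (2), I pick a nonzero $v\in V^+$, which exists by \cref{proposition 4.1}. Irreducibility gives $V=\mathcal{U}v$. Applying the PBW factorization, the $N^+$ factor kills $v$ and the middle factor preserves $V^+$. Hence $V=\mathcal{U}(N^-\otimes\mathbb{C}_q)\,\mathcal{U}((H\otimes\mathbb{C}_q)\oplus D)v\subseteq\mathcal{U}(N^-\otimes\mathbb{C}_q)V^+$, which proves (2).

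For (1), I take a nonzero $(H\otimes\mathbb{C}_q)\oplus D$-submodule $W\subseteq V^+$ and want to show $W=V^+$. The same computation gives $V=\mathcal{U}(N^-\otimes\mathbb{C}_q)W$. The key step is a weight argument. Write $\mathcal{U}(N^-\otimes\mathbb{C}_q)=\mathbb{C}\oplus\mathcal{U}_-'$, where $\mathcal{U}_-'$ is the augmentation ideal; it consists of elements whose $\dot{\mathfrak h}$-weights lie in $-\dot Q_+\setminus\{0\}$. I then need to show $\mathcal{U}_-'W\cap V^+=0$, which gives $V^+=W\oplus(\mathcal{U}_-'W\cap V^+)=W$.

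The intersection statement is where I expect the main obstacle. The $\dot{\mathfrak h}$-weights in $V^+$ need not all be equal, so comparing weights alone does not suffice. The approach I would try first is to show that every weight vector of $V^+$ has $\dot{\mathfrak h}$-weight in $P^+(V)$ restricted to $\dot{\mathfrak h}$. I would then use that $P^+(V)$ has a single $\dot{\mathfrak h}$-component. Indeed, by the argument of \cref{proposition 4.1} (with $c_i=0$, so the weights are $\mathcal{W}$-invariant and real-root strings shift freely in the $\delta$-direction), all $\dot{\mathfrak h}$-parts of $P^+(V)$ coincide with $\bar\lambda$. Then vectors in $\mathcal{U}_-'W$ have $\dot{\mathfrak h}$-weight strictly below $\bar\lambda$, so they cannot lie in $V^+$.

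If that uniformity is hard to extract, I would use the alternative argument instead. Suppose $u\in\mathcal{U}_-'W\cap V^+$ is nonzero. Then $\mathcal{U}u=\mathcal{U}(N^-\otimes\mathbb{C}_q)\,\mathcal{U}(H\otimes\mathbb{C}_q\oplus D)u$ has all $\dot{\mathfrak h}$-weights strictly below those occurring in $W$. This contradicts $\mathcal{U}u=V\supseteq W$, once I take a $\dot{\mathfrak h}$-weight of $W$ that is maximal among weights of $\mathcal{U}u$ in the finite set of relevant $\dot{\mathfrak h}$-weights. That set is finite by integrability and \cref{lemma 3.2}.
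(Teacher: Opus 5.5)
Your argument for (2) and your overall frame for (1) (the triangular decomposition of $(\mathfrak g\otimes\mathbb{C}_q)\oplus D$, PBW, stability of $V^+$, irreducibility of $V$) are the same as the paper's. For (1), the paper passes from $\mathcal{U}(N^-\otimes\mathbb{C}_q)W=V$ directly to $W=V^+$. You have correctly isolated the step that actually needs proof, and your \emph{alternative} argument proves it.

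It can be tightened. $W$ is stable under $H\otimes 1$ and $D$, so $\mathcal{U}_-'W$ and $V^+$ are sums of $\mathfrak h$-weight spaces, and you may take $u$ to be a weight vector. Then $V=\mathcal{U}u=\mathcal{U}(N^-\otimes\mathbb{C}_q)\,\mathcal{U}((H\otimes\mathbb{C}_q)\oplus D)u$ puts every $\dot{\mathfrak h}$-weight of $V$ in $\bar u-\dot Q_+$. On the other hand, $\bar u=\bar w-\eta$ for some nonzero weight vector $w\in W$ and some $\eta\in\dot Q_+\setminus\{0\}$. Since $w\in V$, this is a contradiction, and no finiteness or maximality is needed. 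The same observation shows that every nonzero weight vector of $V^+$ has $\dot{\mathfrak h}$-weight equal to the largest $\dot{\mathfrak h}$-weight of $V$, so $V^+$ carries a single $\dot{\mathfrak h}$-weight. That is the correct form of the uniformity you wanted, and it also gives the first part of \cref{lemma 5.3} without using (1).

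Your first route should be dropped. It is false that all elements of $P^+(V)$ have the same $\dot{\mathfrak h}$-part, so the argument of \cref{proposition 4.1} cannot give it. The reason is that Weyl translations move a weight $\mu$ only by $\mu(\alpha^\vee)\delta_s$, not freely in the $\delta$-direction. For a counterexample, take $q_{ij}=1$ (allowed by the hypotheses), $d=2$, $a\neq 0$, and let $x\otimes t^r$ act on $\overline V=\mathbb{C}^2\otimes\mathbb{C}^2$ by $a^{r_1}(x\otimes 1)+(-a)^{r_1}(1\otimes x)$. The flip $P$ satisfies $P\,(x\otimes t^r)=(-1)^{r_1}(x\otimes t^r)\,P$, so
\[
U=\bigoplus_{r\in\mathbb{Z}^n,\ r_1\ \text{even}} S^2\mathbb{C}^2\otimes t^r\;\oplus\bigoplus_{r\in\mathbb{Z}^n,\ r_1\ \text{odd}}\Lambda^2\mathbb{C}^2\otimes t^r
\]
is an irreducible integrable submodule of $\overline V\otimes A_n$ with trivial central action and finite-dimensional weight spaces. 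In grades with $r_1$ odd the only weight is $\delta_r$. Hence both $\alpha_1$ and $\delta_1$ lie in $P^+(U)$, with different $\dot{\mathfrak h}$-parts. By contrast, $U^+$ is spanned by the vectors $v_1\otimes v_1\otimes t^r$ with $r_1$ even, where $v_1$ is the highest weight vector of $\mathbb{C}^2$, because $E_{12}\otimes t_1$ does not kill $\Lambda^2\mathbb{C}^2\otimes t^r$. This is exactly what the argument above predicts.
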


\begin{proof}
    We note that $[N^+ \otimes \mathbb{C}_q, H \otimes \mathbb{C}_q] \subseteq N^+ \otimes \mathbb{C}_q$ and $[d_i,X \otimes t^a] = a_i X \otimes t^a $ and with the help of these, we can prove that $(H \otimes \mathbb{C}_q \oplus D)\cdot V^+ \subseteq V^+$. So $V^+$ is a $H \otimes \mathbb{C}_q \oplus D$-module and by PBW theorem, $\mathcal{U}(\mathfrak{gl}_d(\mathbb{C}) \otimes \mathbb{C}_q) = \mathcal{U}(N^- \otimes \mathbb{C}_q) \mathcal{U}(H \otimes \mathbb{C}_q) \mathcal{U}(N^+ \otimes \mathbb{C}_q)$. 
    Since $V$ is an irreducible module, we have $V = \mathcal{U}(N^- \otimes \mathbb{C}_q)V^+$.
    It remains to prove that $V^+$ is irreducible.
    Suppose $W \subseteq V^+$ is a nonzero $H \otimes \mathbb{C}_q \oplus D$-submodule. Then $\mathcal{U}(N^- \otimes \mathbb{C}_q) W$ is a $(\mathfrak{gl}_d(\mathbb{C})\otimes \mathbb{C}_q)\oplus D$-submodule of $V$. Then by the irreducibility of $V$, it follows that $V^+ = W$. Hence $V^+$ is irreducible.  
\end{proof}

\begin{lemma}\label{lemma 5.3}
\leavevmode
    \begin{enumerate}
        \item There exist unique $\lambda \in H^*$ and $\beta \in \mathbb{C}^n$(not necessarily unique) such that the weights of $V^+$ are of the form $\lambda + \delta_{r + \beta}$, where $r \in \mathbb{Z}^n$.

        \item $\lambda$ is dominant integral.
    \end{enumerate}
\end{lemma}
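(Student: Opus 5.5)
Fix a nonzero weight vector $v_0\in V^+$ of weight $\mu_0$. We take $\lambda=\mu_0|_H$ and choose $\beta\in\mathbb{C}^n$ with $\beta_i=\mu_0(d_i)$, so that $\mu_0=\lambda+\delta_\beta$. The weights $\delta_m$ are extended by linearity to complex $m$, and $c_i$ acts by zero.

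For part (1) we use \cref{proposition 5.2}(1), which says $V^+$ is irreducible over $(H\otimes\mathbb{C}_q)\oplus D$. This gives $V^+=\mathcal{U}((H\otimes\mathbb{C}_q)\oplus D)v_0$. The algebra $D$ acts by scalars on weight vectors. An element $h\otimes t^r$ has weight $\delta_r$ under $\mathfrak{h}$: indeed, $[\dot{\mathfrak{h}},H\otimes t^r]=0$ and $[d_i,h\otimes t^r]=r_i\,h\otimes t^r$. Hence every PBW monomial applied to $v_0$ has weight $\mu_0+\delta_r$ for some $r\in\mathbb{Z}^n$. So all weights of $V^+$ have the form $\lambda+\delta_{r+\beta}$.

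For uniqueness of $\lambda$, observe that the restriction to $H$ of any weight of $V^+$ equals $\lambda$. This is because $\delta_m$ vanishes on $\dot{\mathfrak{h}}$, and the identity $I$ acts by the same scalar throughout (it lies in the central part $I\otimes Z(\mathbb{C}_q)$, which acts trivially, or at least by a constant on the irreducible $V$). Thus $\lambda$ is determined by $V^+$. By contrast, $\beta$ is only determined modulo $\mathbb{Z}^n$.

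For part (2), take any weight vector $v\in V^+$ of weight $\lambda+\delta_{r+\beta}$. It is annihilated by $N^+\otimes 1\subset N^+\otimes\mathbb{C}_q$, so it is a highest weight vector for $\dot{\mathfrak g}=\mathfrak{sl}_d(\mathbb{C})$. By integrability, the root vectors $E_{j,j+1}\otimes 1$ and $E_{j+1,j}\otimes 1$ act locally nilpotently. Therefore $\mathcal{U}(\dot{\mathfrak g})v$ is an integrable highest weight $\dot{\mathfrak g}$-module, and the standard $\mathfrak{sl}_2$-argument applies to each triple $\{E_{j,j+1},\alpha_j^\vee,E_{j+1,j}\}$. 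Concretely, if $(E_{j+1,j})^{k}v=0$ with $k$ minimal, then applying $E_{j,j+1}$ gives $k\,(\lambda(\alpha_j^\vee)-k+1)=0$. Hence $\lambda(\alpha_j^\vee)=k-1\in\mathbb{Z}_+$; alternatively one can invoke \cref{lemma 3.2}(3),(4). So $\lambda$ is dominant integral on $\dot{\mathfrak{h}}$, which is the meaning of dominant for $\lambda\in H^*$.

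The only delicate point is the bookkeeping in part (1): one must check that $\mathcal{U}(H\otimes\mathbb{C}_q)$ shifts weights only by null roots. It does, since $H\otimes t^r$ commutes with $\dot{\mathfrak{h}}$, so the argument goes through without obstruction.
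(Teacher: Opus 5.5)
Your proof is correct, but both parts take a slightly different and more elementary route than the paper's.

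In part (1), the paper argues that $H\otimes 1$ commutes with $(H\otimes\mathbb{C}_q)\oplus D$. By irreducibility of $V^+$ (\cref{proposition 5.2}) it therefore acts by scalars; this is a Schur-type step, valid because $\dot{\mathfrak h}\otimes 1$ acts semisimply, so its eigenspaces in $V^+$ are submodules. The paper then asserts without further comment that the $d$-eigenvalues lie in $\beta+\mathbb{Z}^n$. Your cyclicity argument $V^+=\mathcal{U}((H\otimes\mathbb{C}_q)\oplus D)v_0$, combined with the fact that each $h\otimes t^r$ has weight $\delta_r$, settles the $H$-part and the $D$-part at once. It also makes explicit what the paper leaves implicit for the $D$-part.

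In part (2), the paper invokes \cref{lemma 3.2} for real roots of the toroidal algebra. Its ``contradiction with $v_0$ being a highest weight vector'' actually needs $P(V)\subseteq\{\lambda-\eta+\delta_{\beta+s}\mid \eta\in\dot{Q}^+,\ s\in\mathbb{Z}^n\}$, i.e.\ \cref{proposition 5.2}(2) combined with part (1). You stay inside $\mathfrak{sl}_d(\mathbb{C})\otimes 1$, which is a genuine copy of $\mathfrak{sl}_d$ because $\langle 1,1\rangle=0$, and do the explicit $\mathfrak{sl}_2$ computation; this is self-contained. Nothing is lost by using only the finite simple roots. Since the $c_i$ act by zero, $\lambda\big((\alpha+\delta_m)^\vee\big)=\lambda(\alpha^\vee)$, so the paper's dominance condition over real roots reduces to yours.

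The one step you should state explicitly is that $V^+$ is $\mathfrak h$-stable. Each $E_{ij}\otimes t^r$ is an $\mathfrak h$-weight vector, so the weight components of any element of $V^+$ again lie in $V^+$. This is what guarantees that the weight vector $v_0$ you start from exists, and the paper uses it tacitly too.
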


\begin{proof}
    \begin{enumerate}
        \item Since $H \otimes 1$ commutes with $(H \otimes \mathbb{C}_q) \oplus D$, it follows from \cref{proposition 5.2} that $H \otimes 1$ acts on $V^+$ by scalars. Equivalently, the action is determined by a single linear functional on $V^+$, which we denote by $\lambda$. Concretely, for each $h \in H$ there exists a scalar $\lambda(h)$ such that
        \[
        (h \otimes 1)\cdot v = \lambda(h)\cdot v \qquad \forall \; v \in V^+.
        \]
        For the derivations, recall that
        \[
        [d_i, x \otimes t^r] = r_i x \otimes t^r.
        \]
        Thus if $v_0 \in V^+$ has a $d$-eigenvalue $\beta=(\beta_1, \cdots ,\beta_n) \in \mathbb{C}^n$, i.e. $d_i v_0 = \beta_i v_0$, then every weight vector in $V^+$ has $d$-eigenvalues of the form $\beta + r$ for some $r \in \mathbb{Z}^n$. Consequently
        \[
        P(V^+) \subseteq \{ \lambda + \delta_{\beta + r} \mid r \in \mathbb{Z}^n \}.
        \]

        \item Let $v_0 \in V^+$ has weight $\mu = \lambda + \delta_{\beta+r}$ for some $r \in \mathbb{Z}^n$ (so $\mu \in P(V) \; \text{and} \; \mu|_H = \lambda$). If $\lambda(\alpha^\vee) <0$ for some $\alpha \in \Delta^{\text{re}}$ then by \cref{lemma 3.2}, $\lambda(\alpha^\vee) \in \mathbb{Z}$ and $\lambda + \alpha \in P(V)$, which is a contradiction because $v_0$ is a highest weight vector. Hence $\lambda(\alpha^\vee) \in \mathbb{Z}_{\ge 0}$ for every real root $\alpha$, that is $\lambda$ is dominant integral weight.
    \end{enumerate}
\end{proof}

\begin{remark}\label{remark 5.4}
\leavevmode
    \begin{enumerate}
        \item Suppose $\lambda = 0$ in \cref{lemma 5.3}, then $V^+$ contains a highest weight vector of weight $\delta_\beta$ for some $\beta \in \mathbb{C}^n$. Since $V$ is irreducible and integrable, each $\mathfrak{sl}_2$-subalgebra corresponding to a real root $\alpha$ acts trivially $($as $\lambda(\alpha^\vee)=0$, by \cref{lemma 3.2}$)$, so all weights coincide with $\delta_\beta$. Using the commutator relations and the fact that $HC_1(\mathbb{C}_q)$ acts trivially, it follows that the Cartan elements $h_\alpha\otimes t^m$ also act by $0$; hence the entire subalgebra $\mathfrak{g} \otimes \mathbb{C}_q$ acts trivially on $V$. Consequently $V$ is one-dimensional and isomorphic to the highest weight module $V(\delta_\beta)$ with the highest weight $\delta_\beta$.
        
        \item For notational simplicity, we may take, without loss of generality, the vector $\beta$ in \cref{lemma 5.3} to be the zero vector.
    \end{enumerate}
\end{remark}

To proceed further, for each $j=1,\cdots ,n$, consider the subalgebra of $\mathfrak{g}\otimes Z(\mathbb{C}_q) \oplus HC_1(\mathbb{C}_q)\oplus D$ defined by
\[
L_j = \mathfrak{g}\otimes \mathbb{C}[t_j^{N_j},t_j^{-N_j}] \oplus \mathbb{C}c_j\oplus \mathbb{C}d_j, \quad \text{where} \; c_j=\langle t_j,t_j^{-1}\rangle.
\]
Let $\mathcal{W}_j$ and $\mathcal{W}_0$ be the Weyl groups related to the affine Kac-Moody algebra $L_j$ and the simple finite-dimensional Lie algebra $\mathfrak{g}$ respectively. Let $\theta$ denote the highest root of $\mathfrak{g}$ and let $\theta^\vee$ be the corresponding co-root. There is a natural isomorphism $\gamma : H \to H^*$ induced by the nondegenerate invariant bilinear form $(\; | \;)$ on $\mathfrak{g}$, defined by
\[
\gamma(h)(x)=(x|h), \quad \forall \, x \in H.
\]
For each $j$, we denote by $\gamma_j$ this same identification on $H$; i.e. $\gamma_j=\gamma$.
Using this identification, set
\[
M_j=\gamma_j(\mathbb{Z}(\mathcal{W}_0\theta^\vee)), \quad T_j=\{t_{\alpha_j} \mid \alpha_j \in M_j\}. 
\]
Then by \cite[Proposition 6.5]{[8]}, there is an isomorphism of groups
\[
\mathcal{W}_j \cong \mathcal{W}_0 \ltimes T_j.
\]

\begin{remark}\label{remark 5.2}
    If we set $\widehat{H}_j=H \oplus \mathbb{C}d_j$, then as $c_j$ acts trivially on $V$, it follows from \cite[Eqn 6.5.5]{[6]} that the translation $t_{\alpha_j}$ acts on $\widehat{H}_j^*$ as
    \[
    t_{\alpha_j}(\mu_j)=\mu_j-\mu_j(\alpha_j^\vee)\delta_j, \quad \forall \; \mu_j \in \widehat{H}_j^*, \; \alpha_j\in M_j.
    \]
\end{remark}

\begin{proposition}\label{proposition 5.5}
    There are only finitely many $H \otimes Z(\mathbb{C}_q)$-submodules of $V^+$.
\end{proposition}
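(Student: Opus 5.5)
The plan is to read the statement as a finiteness result for the decomposition of $V^+$ into $H\otimes Z(\mathbb{C}_q)$-submodules. Concretely, I aim to show that $V^+$ is a direct sum of finitely many $(H\otimes Z(\mathbb{C}_q))\oplus D$-submodules, indexed by the cosets of $\operatorname{rad}f$ in $\mathbb{Z}^n$. I also aim to show that these summands are the only ones that can occur.

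\textbf{Finite index of $\operatorname{rad}f$.} Since $\mathbb{C}_q$ is rational, $N\mathbb{Z}^n\subseteq \operatorname{rad}f$ for $N=\operatorname{lcm}(N_{ij})$. Indeed, $f(Na,b)=f(a,b)^N=1$ for all $a,b\in\mathbb{Z}^n$. Hence $\mathbb{Z}^n/\operatorname{rad}f$ is a finite group.

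\textbf{Decomposition of $V^+$.} By \cref{lemma 5.3} and \cref{remark 5.4}(2), every weight of $V^+$ has the form $\lambda+\delta_r$ with $r\in\mathbb{Z}^n$. For each coset $\bar r=r+\operatorname{rad}f$ I set
\[
V^+_{\bar r}=\bigoplus_{s\in \operatorname{rad}f} V^+_{\lambda+\delta_{r+s}} .
\]
Then $V^+=\bigoplus_{\bar r\in\mathbb{Z}^n/\operatorname{rad}f}V^+_{\bar r}$, and this is a finite sum.

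\textbf{Stability of each summand.} $H\otimes Z(\mathbb{C}_q)$ is spanned by the elements $h\otimes t^m$ with $m\in\operatorname{rad}f$, and each of these shifts the $D$-degree by $m$. Each $V^+_{\bar r}$ is therefore stable under $H\otimes Z(\mathbb{C}_q)$. It is also stable under $D$, since $D$ preserves weight spaces. In contrast, $h\otimes t^m$ with $m\notin\operatorname{rad}f$ moves $V^+_{\bar r}$ to $V^+_{\overline{r+m}}$. This is consistent with \cref{proposition 5.2}, which says $V^+$ is irreducible only under the full algebra $(H\otimes\mathbb{C}_q)\oplus D$.

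\textbf{Minimality of the summands.} To show that no further splitting is possible, I would argue as follows. Let $W\subseteq V^+_{\bar r}$ be a nonzero graded $(H\otimes Z(\mathbb{C}_q))\oplus D$-submodule. For each coset $\bar s$, fix a representative $m(\bar s)$ of $\bar s-\bar r$. The sum $\sum_{\bar s}(H\otimes t^{m(\bar s)})\cdot W$ is stable under $H\otimes\mathbb{C}_q$, because $t^a t^b=\sigma(a,b)t^{a+b}$ and any $t^{a+b}$ differs from a fixed representative by a central monomial. By \cref{proposition 5.2}(1) this sum must therefore equal $V^+$. Intersecting with $V^+_{\bar r}$ then forces $W=V^+_{\bar r}$.

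\textbf{Main obstacle.} The hard point is justifying that the products of the $h\otimes t^m$ with $m$ running through a set of coset representatives really recover the action of all of $H\otimes\mathbb{C}_q$. This amounts to factoring $t^m=\sigma\cdot t^{m(\bar s)}t^{z}$ with $t^z$ central. If the plain span argument is not enough, I would supplement it with a uniform bound on weight-space dimensions of $V^+$. For that I would use the affine subalgebras $L_j$ and the translations $t_{\alpha_j}$ of \cref{remark 5.2}. With $c_j$ acting trivially, $t_{\alpha_j}$ shifts $\lambda+\delta_r$ by $-\lambda(\alpha_j^\vee)\delta_j$. Together with \cref{lemma 3.2}(2), this makes $\dim V^+_{\lambda+\delta_r}$ periodic in $r$, and hence bounded. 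For this step I need $V^+_{\lambda+\delta_r}=V_{\lambda+\delta_r}$, which I would prove by the argument of \cref{proposition 4.1}.
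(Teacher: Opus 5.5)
Your first three steps are correct, but they only show that $V^+=\bigoplus_{\bar r}V^+_{\bar r}$ is a finite direct sum of graded $(H\otimes Z(\mathbb{C}_q))\oplus D$-submodules. Everything rests on the minimality step, and that step fails.

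The claim that $\sum_{\bar s}(H\otimes t^{m(\bar s)})\cdot W$ is stable under $H\otimes\mathbb{C}_q$ conflates multiplication in $\mathbb{C}_q$ with composition of operators on $V$. $V^+$ is a module for the Lie algebra $H\otimes\mathbb{C}_q$, so $(h'\otimes t^a)(h\otimes t^{b})$ acts as a quadratic element of $\mathcal{U}(H\otimes\mathbb{C}_q)$, not as a multiple of $h'h\otimes t^{a+b}$. The relation $t^at^b=\sigma(a,b)t^{a+b}$ gives no factorization of operators. What irreducibility of $V^+$ (\cref{proposition 5.2}) actually gives is that $V^+_{\bar r}$ is irreducible under $D$ together with the $\operatorname{rad}f$-degree part of $\mathcal{U}(H\otimes\mathbb{C}_q)$. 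That part contains products $(h\otimes t^a)(h'\otimes t^{-a})$ with $a\notin\operatorname{rad}f$, which do not lie in $\mathcal{U}(H\otimes Z(\mathbb{C}_q))$.

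In fact your conclusion is false, as the following example shows.
\begin{itemize}
\item Take $n=d=2$ and $q_{12}=-1$, so $\operatorname{rad}f=2\mathbb{Z}^2$. Send $t_1\mapsto s_1=\mathrm{diag}(1,-1)$ and $t_2\mapsto s_2=\left(\begin{smallmatrix}0&1\\1&0\end{smallmatrix}\right)$; this maps $\tau(2,q)$ onto $\mathfrak{sl}_4(\mathbb{C})\subset\mathfrak{gl}_2\otimes M_2(\mathbb{C})$.
\item Pull back $U=S^4(\mathbb{C}^4)$. The elements $I\otimes s_1, I\otimes s_2\in SL_4(\mathbb{C})$ anticommute, but $-I$ acts trivially on $U$, so they act on $U$ by commuting involutions.
\item Let $U_m$ be the joint eigenspace with eigenvalues $((-1)^{m_2},(-1)^{m_1})$. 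Then $V=\bigoplus_{m\in\mathbb{Z}^2}U_m\otimes t^m$ is an irreducible integrable component of $U\otimes A_2$, with finite-dimensional weight spaces and trivial center.
\item Put $x_j=e_1\otimes u_j$. Then $U^+=S^4(\mathrm{span}\{x_1,x_2\})$, so $V^+_\lambda\cong U^+\cap U_0=\mathrm{span}\{x_1^4+x_2^4,\,x_1^2x_2^2\}$ is $2$-dimensional.
\item Each $h\otimes t^m$ with $m\in2\mathbb{Z}^2$ acts on $V^+$ as the scalar $2(h_{11}-h_{22})$ followed by the shift by $t^m$.
\item Hence every line $\ell\subset U^+\cap U_0$ gives a graded submodule $\bigoplus_{m\in 2\mathbb{Z}^2}\ell\otimes t^m$ of $V^+_{\bar 0}$. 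There are infinitely many such submodules, and $V^+_{\bar 0}$ is not irreducible.
\end{itemize}

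Your fallback cannot repair this: here the weight multiplicities of $V^+$ are uniformly bounded by $2$, yet there are infinitely many submodules. That fallback is essentially the paper's own route. The paper uses the translations $t_{\theta_j}$ of the affine subalgebras $L_j$ (\cref{remark 5.2}) to move every weight of $V^+$ into the finite box $P^{(1)}$. What that argument honestly yields is the uniform multiplicity bound recorded in the remark after \cref{proposition 5.5}. The same example shows that the paper's final counting step (finitely many submodules) also cannot hold as stated. What survives is your finite coset decomposition and the finite set $P^{(1)}$; neither gives finiteness of the set of submodules.
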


\begin{proof}
    For each $j=1, \cdots,n$, we have
    \[
    \frac{2\theta}{(\theta | \theta)}= \gamma_j(\theta^\vee) \in M_j.
    \]
    Set $\theta_j=\gamma_j(\theta^\vee)$ and $p_j=(\lambda | \theta_j)\in \mathbb{N}$ (by \cref{lemma 5.3} and \cref{remark 5.4}).

    For any $s=(s_1, \cdots ,s_n) \in \mathbb{Z}^n$, define
    \[
    s(m) = (s_1m_1, \cdots ,s_nm_n)\in \operatorname{rad}f.
    \]
    Again for any integer $k_j \in \mathbb{Z}$ with $|k_j|\ge m_jp_j$, there exist non-zero integers $q_j \in \mathbb{Z}$ and $r_j \in \mathbb{Z}_+$ such that
    \[
    k_j=q_jp_jm_j+r_j, \; |r_j|<|p_jm_j|.
    \]
    Let $t_j=t_{\theta_j}$ and denote by $\mathcal{W}'$, the Weyl group corresponding to $\mathfrak{g} \otimes Z(\mathbb{C}_q) \oplus HC_1(\mathbb{C}_q) \oplus D$. Define
    \[
    w=(\prod_{q_j<0}t_j^{-m_jq_j})(\prod_{q_j>0}t_j^{m_jq_j}) \in \mathcal{W}',
    \]
    then by \cref{remark 5.2}, for $k=(k_1, \cdots ,k_n)$ and $r=(r_1, \cdots ,r_n)$ we have
    \[
    w(\lambda+\delta_{k})=\lambda+\delta_{r}, \; |r_i|<|p_im_i|, \quad \forall \; i=1, \cdots ,n.
    \]
    Now set
    \[
    P^{(1)}=\{\lambda+\delta_{r} \mid r\in \mathbb{Z}^n, \; |r_i|<|p_im_i|\}.
    \]
    Finally, observe that any \(H\otimes Z(\mathbb{C}_q)\)-submodule of \(V^+\) is determined by which highest-weight vectors of \(V\) it contains, therefore it is generated by those highest-weight vectors. By the above Weyl translation argument and the finite dimensionality of the weight spaces, each submodule is generated by a subset of the basis vectors with weights in \(P^{(1)}\). Since \(P^{(1)}\) is finite, there are only finitely many such submodules. Hence the Proposition follows.
\end{proof}

\begin{remark}
    By the argument above, every weight $\mu \in P(V^+)$ is $\mathcal{W}'$-conjugate to some $\nu \in P^{(1)}$. Since Weyl translations preserve dimensions and each $\dim V_\nu$ is finite, we have
    \[
    \dim V_\mu^+ \le \max_{\nu \in P^{(1)}}\dim V_\nu.
    \]
    In particular, $V^+$ has a uniform finite bound on the dimensions of all its weight spaces.
\end{remark}

\begin{lemma}\label{lemma 5.8}
    There exist only finitely many $\lambda_1, \cdots ,\lambda_l \in H^*$ and some $\beta \in \mathbb{C}^n$ such that $P(V) \subseteq \{\lambda_i+\delta_{\beta+r} \mid 1 \le i \le l, \; r \in \mathbb{Z}^n\}$.
\end{lemma}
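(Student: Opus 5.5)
The plan is to combine $V=\mathcal{U}(N^-\otimes\mathbb{C}_q)V^+$ (\cref{proposition 5.2}) with \cref{lemma 5.3}. Together they control the $d$-components of all weights and reduce the $H$-components to a finite computation inside $\mathfrak{gl}_d(\mathbb{C})$-weights.

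\emph{Step 1 (the $D$-part).} By \cref{lemma 5.3} and \cref{remark 5.4}, every weight of $V^+$ has the form $\lambda+\delta_{\beta+r}$ with $r\in\mathbb{Z}^n$. Every element of $N^-\otimes\mathbb{C}_q$ is a sum of root vectors $X\otimes t^m$ with $[d_i,X\otimes t^m]=m_iX\otimes t^m$. Applying a PBW monomial in $\mathcal{U}(N^-\otimes\mathbb{C}_q)$ to a weight vector of $V^+$ therefore produces a weight vector whose weight is $\lambda-\eta+\delta_{\beta+r+s}$, where $\eta\in\dot Q_+$ and $s\in\mathbb{Z}^n$. Thus every weight of $V$ has the form $\mu+\delta_{\beta+r}$ with $\mu=\lambda-\eta$, $\eta\in\dot Q_+$, and $\beta$ is the single vector fixed in \cref{lemma 5.3}.

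\emph{Step 2 (finiteness of the $H$-part).} It remains to show that only finitely many $\mu=\lambda-\eta$ occur. For a weight $\nu=\mu+\delta_{\beta+r}\in P(V)$, consider the finite-dimensional $\dot{\mathfrak g}$-module $V_{\underline{\beta+r}}$ used in \cref{lemma 3.3}; the grade space is stable under $\mathfrak{g}\otimes 1$. By \cref{lemma 3.2}, $P(V)$ is $\mathcal{W}_0$-invariant and $\mu$ is integral, so I can choose $w\in\mathcal{W}_0$ with $w\mu$ dominant. Applying Step 1 to $w\nu$ gives $w\mu\le\lambda$. Since $\mathcal{W}_0$ fixes the $\delta$'s, this yields $w\mu\in\{\text{dominant }\mu'\mid \mu'\le\lambda\}$, which is a finite set: $\lambda-\mu'\in\dot Q_+$ and dominance bound the coefficients, exactly as in the classical fact that a highest weight module of $\mathfrak{sl}_d$ with dominant $\lambda$ has finitely many dominant weights below $\lambda$. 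Its $\mathcal{W}_0$-orbits are finite, so the set of possible $\mu$ is finite. Call these $\lambda_1,\dots,\lambda_l$.

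\emph{Step 3 (the $\mathfrak{gl}_d$ versus $\mathfrak{sl}_d$ bookkeeping).} The identity component $I\otimes 1$ acts by a scalar on all of $V$, since it is central in $\mathfrak{g}\otimes\mathbb{C}_q$ up to the trivially acting part. Its value on $\lambda-\eta$ equals its value on $\lambda$, so Step 2 really only concerns $\dot{\mathfrak h}^*$, where the finiteness argument lives. The main obstacle I expect is justifying $w\mu\le\lambda$ rigorously, that is, that every weight of $V$ lies below $\lambda$ in the $\dot Q$-order. This is immediate from Step 1 because $\lambda|_H$ is the unique $H$-weight on $V^+$ (\cref{lemma 5.3}(1)). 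The care needed is that $\mathcal{W}_0$ acts only on the $\dot{\mathfrak h}$-component and preserves $P(V)$ (\cref{lemma 3.2}(1)), so that $w\nu$ is again a weight of $V$ with the same $\delta$-part.
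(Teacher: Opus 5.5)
Your argument is correct and is essentially the paper's proof: every weight has the form $\lambda-\eta+\delta_{\beta+r}$ by \cref{proposition 5.2} and \cref{lemma 5.3}, the $H$-part is $\mathcal{W}_0$-conjugate to a dominant weight that is $\le\lambda$ by the $\mathcal{W}$-invariance of $P(V)$, and only finitely many dominant weights lie below $\lambda$. Your version is slightly more careful than the paper's, which takes the $\lambda_i$ to be only the dominant weights (so its final inclusion does not literally follow), whereas you correctly take the union of their finite $\mathcal{W}_0$-orbits; your passing remark that $V_{\underline{\beta+r}}$ is finite-dimensional is not justified at this stage, but you never use it.
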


\begin{proof}
    By \cref{remark 5.4} and \cref{proposition 5.2}, the set of weights of $V$ satisfies
    \[
    P(V) \subseteq \{\lambda - \eta + \delta_r \mid \eta \in \dot{Q}^+, \; r \in \mathbb{Z}^n\},
    \]
    where $\lambda$ is the unique element of $H^*$ as in \cref{lemma 5.3}. Fix a weight of the form $\mu = \lambda -\eta + \delta_r \in P(V)$ (with $\eta \in \dot{Q}^+, \; r \in \mathbb{Z}^n$). By Lemma A of \cite[Section 13.2]{[7]}, there exists a unique dominant integral weight $\mu^+ \in H^*$ and some $\sigma \in \mathcal{W}_0$ such that
    \[
    \sigma(\lambda-\eta)= \mu^+.
    \]
    Since $\mathcal{W}_0 \subseteq \mathcal{W}$, it follows that $\sigma \in \mathcal{W}$ such that
    \[
    \sigma(\lambda-\eta+\delta_r) = \mu^+ + \delta_r.
    \]
    Moreover by \cref{lemma 3.2}, we have $\mu^+ \le \lambda$. Thus every weight $\mu$ of $V$ is $\mathcal{W}_0$-conjugate to a unique dominant weight $\mu^+$ with $\mu^+ \le \lambda$. By Lemma B of \cite[Section 13.2]{[7]}, only finitely many dominant integral weights $\mu^+$ satisfy this inequality. We denote them by $\{\lambda_1, \cdots ,\lambda_l\}$. Consequently
    \[
    P(V) \subseteq \{\lambda_i + \delta_{\beta+r} \mid 1 \le i \le l, \; r \in \mathbb{Z}^n\}.
    \]
\end{proof}

\begin{remark}\label{remark 5.8}
\leavevmode
    For each $i=1, \cdots ,l$, let
    \[
    P_i^{(1)}=\{\lambda_i+\delta_{\beta+r}\; ; \; |r_j|<|p_jm_j|,\; 1\le j\le n\},
    \]
    be the finite fundamental set constructed via the Weyl-group argument in \cref{proposition 5.5}. Every weight in the corresponding coset $\lambda_i+\delta_{\beta+\mathbb Z^n}$ lies in the $\mathcal{W}'$-orbit of an element of $P_i^{(1)}$. Since $V$ is integrable with finite-dimensional weight spaces, the $\mathcal{W}'$-action preserves weight multiplicities. Therefore, each weight in the coset has the same dimension as some weight in $P_i^{(1)}$. Setting
    \[
    C_i:=\max_{\nu\in P_i}\dim V_\nu<\infty \quad
    \text{and} \quad C=\max_{1\le i\le\ell}C_i,
    \] we obtain a uniform bound on the dimensions of all weight spaces
    \[
    \dim V_\mu \le C, \quad \text{for all weights} \; \mu \; \text{in}\; V.
    \]

\end{remark}

\begin{definition}
    A linear map $z : V^+ \to V^+$ is called a highest central operator of degree $m$ if it satisfies the following conditions.
    \begin{enumerate}
        \item $z$ commutes with the action of $H \otimes \mathbb{C}_q$,
        \item $d_iz-zd_i = m_iz \; \forall \; i=1,2,\cdots ,n$.
    \end{enumerate}
\end{definition}

\begin{remark}\label{remark 5.10}
\leavevmode
    \begin{enumerate}
        \item $H \otimes Z(\mathbb{C}_q)$ is an abelian Lie algebra.
        \item For each $h \in H$ and $r \in Z(\mathbb{C}_q)$, $h \otimes t^r$ is a highest central operator of degree $r$ on $V^+$.
    \end{enumerate}
\end{remark}

\begin{lemma}\label{lemma 5.1}
Let \(V\) be an integrable module for the loop algebra \(L = \mathfrak{g} \otimes \mathbb{C}[t, t^{-1}]\) (with no central extension).
If \(v \in V\) is a highest weight vector of weight \(\lambda\), then
\[
h \cdot v = \lambda(h)v = 0 \; \text{if and only if} \; (h \otimes t^k)\cdot v = 0 \; \text{for all} \; k>0.
\]
\end{lemma}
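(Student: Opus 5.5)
The plan is to reduce everything to $\mathfrak{sl}_2$-triples inside the loop algebra. I read the statement for $h=\alpha^\vee$ a coroot of $\mathfrak g$, with $e=e_\alpha$, $f=f_\alpha$ the corresponding root vectors. Put $m=\lambda(\alpha^\vee)$, which is a non-negative integer by \cref{lemma 5.3}. Since $L$ has no central extension, for every $k\in\mathbb Z$ the elements
\[
e\otimes t^k,\quad f\otimes t^{-k},\quad h\otimes 1
\]
span a copy of $\mathfrak{sl}_2$. By integrability, $v$ generates a finite-dimensional module for each such copy.

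For ($\Rightarrow$), assume $m=0$. The vector $v$ is killed by $e\otimes t^k$ and has $h$-weight $0$. By finite-dimensional $\mathfrak{sl}_2$-theory it therefore spans a trivial submodule for each triple, so $(f\otimes t^{-k})v=0$ for all $k$; in particular $(f\otimes 1)v=0$. Then, for $k>0$,
\[
(h\otimes t^k)v=[e\otimes t^k,f\otimes 1]v=(e\otimes t^k)(f\otimes 1)v-(f\otimes 1)(e\otimes t^k)v=0 .
\]

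For ($\Leftarrow$), assume $(h\otimes t^k)v=0$ for all $k>0$ and suppose, for contradiction, that $m>0$.

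First, I invoke Garland's identity for the loop algebra of $\mathfrak{sl}_2$, applied to a vector killed by $e\otimes t^{\mathbb Z}$. It gives
\[
(e\otimes t)^{(r)}(f\otimes 1)^{(r)}v=(-1)^rP_r v ,
\]
where $\sum_r P_r u^r=\exp\bigl(-\sum_{k>0}\tfrac{h\otimes t^k}{k}u^k\bigr)$. By hypothesis every $h\otimes t^k$ with $k>0$ kills $v$, so $P_r v=0$ for all $r\ge 1$. Hence $(e\otimes t)^m(f\otimes 1)^m v=0$.

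Second, set $w=(f\otimes 1)^m v$; this is nonzero because $v$ is an $h$-highest weight vector of weight $m$ for the triple $(e,f,h)$. I then look at $w$ with respect to the triple $(e\otimes t,\,f\otimes t^{-1},\,h\otimes 1)$. The vector $(f\otimes t^{-1})w$ has $\mathfrak g$-weight $\lambda-(m+1)\alpha$, and the reflection $r_\alpha$ sends this to $\lambda+\alpha$. The latter is not a weight of $V$, since $v$ is a highest weight vector; note $V_{\lambda+\alpha}$ here means the whole $H$-weight space, all $t$-degrees included. By $\mathcal W_0$-invariance of the weight set, \cref{lemma 3.2}, $\lambda-(m+1)\alpha$ is not a weight either, so $(f\otimes t^{-1})w=0$. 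Thus $w$ is a lowest weight vector of $h$-weight $-m$ in a finite-dimensional $\mathfrak{sl}_2$-module, and so $(e\otimes t)^m w\neq 0$. This contradicts the first step, and therefore $m=0$.

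The main obstacle is the Garland identity: the precise form of the divided-power formula and the sign and normalisation of $P_r$. If quoting it is unsatisfactory, I would instead prove the needed special case $(e\otimes t)^r(f\otimes 1)^r v\in\operatorname{span}\{\text{monomials in } h\otimes t^k,\ k>0\}\,v$, with zero constant term for $r\ge1$, by induction on $r$. The induction moves $e\otimes t$ to the right through the powers of $f\otimes 1$ using $[e\otimes t^a,f\otimes 1]=h\otimes t^a$ and $[h\otimes t^a,f\otimes 1]=-2f\otimes t^a$. The bookkeeping must be arranged so that each surviving term ends in some $h\otimes t^k$ with $k>0$ applied to $v$, which vanishes by hypothesis.
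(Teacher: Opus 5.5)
Your proof is correct, and it takes a different route from the paper's. The paper runs both directions through the series $\Lambda^+_\beta(u)$ of [6]. For ($\Rightarrow$) it quotes $\Lambda_{\beta,m}v=0$ for $m>\lambda(h_\beta)$ and then takes the logarithm of $\Lambda^+_\beta(u)v=v$. For ($\Leftarrow$) it quotes the identity $\bigl(\tilde H_\beta(u)\Lambda^+_\beta(u)\bigr)_s v=0$ for all $s\in\mathbb{Z}$ (Prop.~1.1(iii) of [6]). Once $\Lambda^+_\beta(u)v=v$, this collapses to $h_{\beta,s-1}v=0$, and the paper reads off $s=1$.

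Your ($\Rightarrow$) uses no series at all. $\mathfrak{sl}_2$-theory gives $(f\otimes 1)v=0$, and a single commutator finishes; it even yields $(h\otimes t^j)v=0$ for every $j\in\mathbb{Z}$. Your ($\Leftarrow$) needs only the basic Garland identity, and your sign and normalisation are right: the cases $r=1,2$ check directly, modulo the left ideal generated by the $e\otimes t^k$, $k\ge 1$. It replaces the more specialised identity (iii) by a contradiction through the shifted triple $(e\otimes t,f\otimes t^{-1},h\otimes 1)$. The paper's version is shorter given its citations; yours is more elementary and self-contained.

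One step needs repair. You assert that $\lambda+\alpha$ is not a weight of $V$ \emph{because $v$ is a highest weight vector}. This is false for an arbitrary integrable $L$-module: $V$ could be $U(L)v\oplus V'$ with $V'$ any integrable module having $\lambda+\alpha$ as a weight. The fix is to replace $V$ by $U(L)v$ from the outset:
\begin{itemize}
\item Since $N^+\otimes\mathbb{C}[t^{\pm1}]$ kills $v$, PBW gives $U(L)v=U(N^-\otimes\mathbb{C}[t^{\pm1}])\,U(H\otimes\mathbb{C}[t^{\pm1}])\,v$. So every $H$-weight of $U(L)v$ lies in $\lambda-\dot{Q}^+$.
\item $U(L)v$ is still integrable, so its weights are $\mathcal{W}_0$-invariant.
\item Both $w$ and $(f\otimes t^{-1})w$ lie in $U(L)v$, so your argument goes through there.
\end{itemize}
In the paper's application this is automatic, since $V=U(N^-\otimes\mathbb{C}_q)V^+$.

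Similarly, you need not appeal to Lemmas~3.2 and~5.3, which are stated for $\hat{\tau}(d,q)$-modules. Both facts you use from them, $m\in\mathbb{Z}_{\ge0}$ and the $\mathcal{W}_0$-invariance of weights, follow directly from $\mathfrak{sl}_2$-theory in any integrable $L$-module.
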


\begin{proof}
    We fix a positive (simple) root $\beta$ and write $h_\beta$ for the coroot. We define the formal power series as in \cite{[6]}
    \[
    \tilde{H}_{\beta}(u)=\sum_{m\in \mathbb{Z}}h_{\beta,m}u^{m+1},
    \]
    \[
    \Lambda_{\beta}^+(u) = \exp{\left(-\sum_{k\geq 1}\frac{h_\beta \otimes t^k}{k}u^k\right)} = \sum_{m\geq 0}\Lambda_{\beta,m}u^m,
    \]
    so the constant term $\Lambda_{\beta,0}=1$. By Proposition 1.1(i) of \cite{[6]}, for the highest weight vector $v$, we have $\Lambda_{\beta,m}\cdot v=0$ for all $m > \lambda(h_{\beta})$. If $\lambda(h_{\beta})=0$, then $\Lambda_{\beta,m}\cdot v=0$ for all $m>0$, hence
    \[
    \Lambda_\beta^+(u) \cdot v = \Lambda_{\beta,0}\cdot v = 1\cdot v =v.
    \]
    Since the constant term of $\Lambda_{\beta}^+(u)$ is $1$, its logarithm is well defined:
    \[
    \log{\Lambda_{\beta}^+(u)} = -\sum_{k \geq 1} \frac{h_\beta \otimes t^k}{k}u^k.
    \]
    Acting on $v$, we have $\Lambda_{\beta}^+(u)\cdot v =v$, hence $\log{\Lambda_{\beta}^+(u)}\cdot v =0$. Comparing coefficients of $u^k$ yields $(h_{\beta}\otimes t^k) \cdot v =0$ for all $k \geq 1$. Thus $\lambda(h_\beta)=0 \Rightarrow (h_\beta \otimes t^k) \cdot v =0$ for all $k>0$.

    Conversely assume $(h_\beta \otimes t^k) \cdot v =0$ for all $k>0$. Then the exponential in the definition of $\Lambda_\beta^+(u)$ acts trivially on $v$ so $\Lambda_\beta^+(u)\cdot v =v$, i.e. $\Lambda_{\beta,0}\cdot v=v \; \text{and}\; \Lambda_{\beta,m} \cdot v =0$ for all $m\geq 1$. By Proposition 1.1(iii) of \cite{[6]}, we have
    \[
    \left(\tilde{H}_{\beta}(u)\Lambda_{\beta}^+(u)\right)_s\cdot v=0 \quad \text{for all} \; s \in \mathbb{Z},
    \]
    which can be written as
    \[
    \sum_{r=0}^{\infty}\left(\tilde{H}_{\beta}(u)\right)_{s-r}\Lambda_{\beta,r} \cdot v = \sum_{r=0}^{\infty}h_{\beta,s-r-1}\Lambda_{\beta,r}\cdot v = h_{\beta,s-1}\Lambda_{\beta,0}\cdot v = 0.
    \]
    Thus $h_{\beta,s-1}\cdot v = 0$ for all $s \in \mathbb{Z}$. In particular for $s=1$, i.e. $h_{\beta}\cdot v = \lambda(h_{\beta})v =0$. This completes the proof.
\end{proof}

\begin{lemma}\label{lemma 5.12}
Let $h \in \dot{\mathfrak{h}}$ and $a \in \operatorname{rad}f$.  
If there exists $v \in V^+$ such that $(h \otimes t^a)\cdot v \neq 0$, then 
$
(h \otimes t^a)\cdot w \neq 0 \quad \text{for all} \; w \in V^+.
$
\end{lemma}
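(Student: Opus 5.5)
Since $a \in \operatorname{rad}f$, the monomial $t^a$ is central in $\mathbb{C}_q$ by Proposition 2.1. Hence $h\otimes t^a$ commutes with all of $H\otimes\mathbb{C}_q$: indeed $[h\otimes t^a, h'\otimes t^b] = hh'\otimes(t^at^b - t^bt^a) = 0$, and the $HC_1$ term vanishes because $HC_1(\mathbb{C}_q)$ acts trivially on $V$. By \cref{remark 5.10}, $z := h\otimes t^a$ is therefore a highest central operator of degree $a$ on $V^+$. The idea is to show that $\ker z \cap V^+$ is an $(H\otimes\mathbb{C}_q)\oplus D$-submodule of $V^+$. The irreducibility of $V^+$ from \cref{proposition 5.2}(1) then forces $\ker z\cap V^+$ to be either $0$ or all of $V^+$. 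The hypothesis that $z v\neq 0$ for some $v\in V^+$ excludes the second possibility, so $\ker z\cap V^+=0$, which is exactly the statement.

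We check the submodule property in two steps.

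\emph{Stability under $H\otimes\mathbb{C}_q$.} Let $w\in\ker z$ and $y\in H\otimes\mathbb{C}_q$. Then $z(yw)=y(zw)=0$, since $z$ commutes with $y$ by the computation above.

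\emph{Stability under $D$.} Let $w\in\ker z$. Using $d_iz - zd_i = a_i z$, we get $z(d_iw) = d_i z w - a_i zw = 0$. Thus $\ker z$ is $D$-stable.

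It remains to note that $z$ actually preserves $V^+$, so that restricting to $V^+$ makes sense. For $x\in N^+\otimes\mathbb{C}_q$ we have $[x,h\otimes t^a]\in N^+\otimes\mathbb{C}_q$, again using that $t^a$ is central and that the $HC_1$ part acts by zero. Hence $x\,z\,w = z\,x\,w + [x,z]w = 0$ for $w\in V^+$, so $zV^+\subseteq V^+$. Putting these together, $\ker z\cap V^+$ is an $(H\otimes\mathbb{C}_q)\oplus D$-submodule of $V^+$, and the irreducibility argument of the first paragraph concludes the proof.

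The step that needs the most care is confirming that the irreducibility in \cref{proposition 5.2} is genuinely irreducibility of $V^+$ under $(H\otimes\mathbb{C}_q)\oplus D$. This is exactly what is needed: $\ker z$ is not an $N^-\otimes\mathbb{C}_q$-stable object in any sense, and it need not be a weight-homogeneous piece. It is nonetheless a $D$-stable subspace of the weight module $V^+$, and by \cref{proposition 5.2} that suffices. If one worries that the proof of \cref{proposition 5.2} needs $W$ to be a weight submodule, we note that $\ker z\cap V^+$ is automatically $D$-graded: $D$ acts semisimply on $V^+$, and $\ker z$ is $D$-stable by the computation above.
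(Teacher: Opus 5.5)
Your proof is correct and follows the paper's argument. Like the paper, you note that $h\otimes t^a$ commutes with $H\otimes\mathbb{C}_q$ and has $D$-degree $a$, so its kernel in $V^+$ is an $(H\otimes\mathbb{C}_q)\oplus D$-submodule, and the irreducibility of $V^+$ from \cref{proposition 5.2} then forces that kernel to be $0$. Your extra checks (that $h\otimes t^a$ preserves $V^+$, that the $HC_1$ term vanishes, and that the kernel is a weight subspace) only fill in details the paper leaves implicit.
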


\begin{proof}
  We consider $W' = \{v \in V^+ \mid h \otimes t^a\cdot v =0,\; h \in \dot{\mathfrak{h}}, \; a \in \operatorname{rad}f\}$ (for fixed h). Since $h \otimes t^a$ commutes with $H \otimes \mathbb{C}_q$ and $[d_i,h \otimes t^a] = a_i h \otimes t^a$, therefore $W'$ is a $(H \otimes \mathbb{C}_q) \oplus D$-module. Since $V^+$ is irreducible, $W'$ must be either $0$ or $V^+$.
\end{proof}

\begin{lemma}\label{lemma 5.13}
For each $1 \le i\le n$, there exists a nonzero bijective highest central operator
\[
z_i=h \otimes t_i^{M_i}, \quad h\in H, \; M_i\in \mathbb{Z},
\]
acting on $V^+$.
\end{lemma}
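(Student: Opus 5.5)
Fix $i$ and let $N_i$ be the minimal positive integer with $t_i^{N_i}\in Z(\mathbb{C}_q)$, as in the proof of \cref{proposition 4.2}. For every multiple $M$ of $N_i$ and every $h\in H$, \cref{remark 5.10} makes $h\otimes t_i^{M}$ a highest central operator of degree $Me_i$ on $V^+$. By \cref{lemma 5.12}, such an operator is either identically zero on $V^+$ or kills no nonzero vector of $V^+$. The plan is therefore in three steps: find one pair $(h,M)$ with $M\in N_i\mathbb{Z}$ for which $h\otimes t_i^{M}$ is nonzero on $V^+$; conclude from \cref{lemma 5.12} that it is injective; and upgrade injectivity to bijectivity using finite-dimensionality of the weight spaces.

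\textbf{Step 1 (non-vanishing).} We may assume $\lambda\neq 0$, since \cref{remark 5.4}(1) disposes of $\lambda=0$. Pick a simple coroot $h=h_\beta$ with $\lambda(h_\beta)>0$, which is possible because $\lambda$ is dominant integral by \cref{lemma 5.3}. Restrict $V$ to the loop algebra $\mathfrak{g}\otimes\mathbb{C}[t_i^{N_i},t_i^{-N_i}]\subset \mathfrak{g}\otimes Z(\mathbb{C}_q)$; this is the subalgebra $L_j$ with $c_i$ acting trivially. Any $v\in V^+$ is a highest weight vector of weight $\lambda$ for this loop algebra, and $V$ is integrable for it. By \cref{lemma 5.1}, applied with $t=t_i^{N_i}$, the inequality $\lambda(h_\beta)\neq 0$ yields some $k>0$ with $(h_\beta\otimes t_i^{kN_i})\cdot v\neq 0$. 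We set $M_i=kN_i$ and $z_i=h_\beta\otimes t_i^{M_i}$.

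\textbf{Step 2 (injectivity).} By \cref{lemma 5.12}, $z_i w\neq 0$ for every nonzero $w\in V^+$. The lemma as stated is about individual vectors, so to be safe I would rerun its argument on the kernel: $\ker z_i$ is an $(H\otimes\mathbb{C}_q)\oplus D$-submodule, because $z_i$ commutes with $H\otimes\mathbb{C}_q$ and has a $D$-degree. By \cref{proposition 5.2} it is therefore $0$ or $V^+$, and it is not $V^+$.

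\textbf{Step 3 (surjectivity).} The operator $z_i$ maps $V^+_\mu$ to $V^+_{\mu+M_i\delta_i}$. By Remark 5.6 (the Weyl translation argument of \cref{proposition 5.5}), together with the uniform bound of \cref{remark 5.8}, these weight spaces are finite-dimensional, and their dimensions along each line $\mu+\mathbb{Z}M_i\delta_i$ take only finitely many values. Translations in the affine Weyl group of $L_i$ shift $\delta_i$-degrees by multiples of $p_iM_i$, with $p_i$ as in \cref{proposition 5.5}, and they preserve dimensions. Hence the dimension of $V^+_{\mu+sM_i\delta_i}$ is periodic in $s$. Along such a line $z_i$ gives injections
\[
V^+_\mu\hookrightarrow V^+_{\mu+M_i\delta_i}\hookrightarrow\cdots ,
\]
so the dimensions are non-decreasing. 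A sequence that is both non-decreasing and periodic is constant, so each of these injections is an isomorphism. Therefore $z_i$ is bijective on $V^+$.

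The main obstacle is Step 3. We must check that the Weyl-group invariance of dimensions (\cref{lemma 3.2}) really applies to the weight spaces of $V^+$ and not only to those of $V$. For this I would use that the translation $t_{\theta_i}$ sends $\lambda+\delta_r$ to $\lambda+\delta_{r-p_i\delta_i\text{-shift}}$, and that every weight space of $V$ with $H$-part $\lambda$ lies in $V^+$, since $\lambda+\alpha$ is not a weight for $\alpha>0$. It follows that $V_{\lambda+\delta_r}=V^+_{\lambda+\delta_r}$, and the periodicity argument applies to $V^+$ directly.
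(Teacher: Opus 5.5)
Your Steps 1 and 2 follow the paper's argument. \cref{lemma 5.1}, applied to the loop algebra $\mathfrak{g}\otimes\mathbb{C}[t_i^{N_i},t_i^{-N_i}]$, gives $k>0$ with $(h\otimes t_i^{kN_i})v_0\neq 0$. Irreducibility of $V^+$ applied to the kernel (\cref{lemma 5.12}) then gives injectivity. Like the paper, you need $\lambda\neq 0$; for $\lambda=0$ no such nonzero operator exists.

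The routes diverge at surjectivity. The paper applies the same irreducibility to the image. $z_iV^+$ is stable under $H\otimes\mathbb{C}_q$ because $z_i$ commutes with it, and stable under $D$ because $d_j z_i v=z_i d_j v+\delta_{ij}M_i z_i v$. So $z_iV^+$ is a nonzero $(H\otimes\mathbb{C}_q)\oplus D$-submodule, and it equals $V^+$ by \cref{proposition 5.2}.

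Your multiplicity argument is also valid:
\begin{enumerate}
\item All weights of $V=\mathcal{U}(N^-\otimes\mathbb{C}_q)V^+$ have finite part in $\lambda-\dot{Q}^+$, so $V_{\lambda+\delta_r}=V^+_{\lambda+\delta_r}$.
\item Translations in $\mathcal{W}$ preserve multiplicities (\cref{lemma 3.2}), and $t_{\theta_i}$ changes the $\delta_i$-degree of $\lambda+\delta_r$ by $p_iN_i$.
\item Hence $s\mapsto\dim V^+_{\mu+sM_i\delta_i}$ is periodic of period $p_i>0$, and a non-decreasing periodic integer sequence is constant.
\end{enumerate}
One harmless imprecision in your write-up: it is $t_{\theta_i}^k$, not an arbitrary translation, that realizes the shift $p_iM_i$.

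The paper's route is a one-liner that reuses the tool from your Step 2 and needs no information about multiplicities. Yours costs the extra checks on the weight spaces of $V^+$ and the translation computation. In return it proves something slightly more robust: every injective homogeneous operator of degree $M_ie_i$ on $V^+$ is automatically bijective. That conclusion uses only finite-dimensionality of weight spaces and $\mathcal{W}$-invariance of multiplicities, not irreducibility of $V^+$.
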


\begin{proof}
Fix $i \in \{1, \cdots ,n\}$ and choose $0 \ne v_0 \in V^+$. 
By \cref{lemma 5.1}, working in the loop algebra 
$\mathfrak{gl}_d(\mathbb{C}) \otimes \mathbb{C}[t_i^{N_i}, t_i^{-N_i}]$, there exists $k \in \mathbb{Z}_+$ and an element $h \in H$ with $\lambda(h)\ne 0$ such that
\[
(h \otimes t_i^{k N_i}) \cdot v \neq 0.
\] 
Set $M_i = k N_i$ and define $z_i = h \otimes t_i^{M_i}$, then $z_i$ acts nontrivially on $v_0$.\\
By \cref{lemma 5.12}, if a central element $h \otimes t^a$ acts nontrivially on some nonzero vector of $V^+$, it acts nontrivially on every nonzero vector. Hence $z_i$ is injective on $V^+$. 
Since $z_i V^+$ is a nonzero $H \otimes \mathbb{C}_q$-submodule of the irreducible module $V^+$, we must have $z_i V^+ = V^+$, therefore $z_i$ is bijective.
\end{proof}

\begin{lemma}
Let $z_i : V^+ \to V^+$ be a highest central operator of degree $M_i e_i$. Define a linear operator $T_i : V^+ \to V^+$ by 
\[
T_i(z_i v) = v \quad \forall \; v \in V^+.
\] 
Further define
\[
W = \operatorname{Span}_{\mathbb{C}} \{ z_i v - v \mid v \in V^+, \; 1 \leq i \leq n \}.
\]
Then the following statements hold
\begin{enumerate}
    \item $T_i z_i = \mathrm{Id}$.
    \item $[T_i, H \otimes \mathbb{C}_q] = 0$.
    \item For all $v \in V^+$, $T_i v - v \in W$.
    \item For all $k \ge 1$ and $v \in V^+$, $T_i^k v - v \in W$.
    \item For all $k \ge 1$ and $v \in V^+$, $z_i^k v - v \in W$.
\end{enumerate}
\end{lemma}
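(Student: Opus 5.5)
The plan is to derive everything from the bijectivity of $z_i$ established in \cref{lemma 5.13}. Because $z_i$ is bijective on $V^+$, the assignment $T_i(z_iv)=v$ yields a well-defined linear map; indeed $T_i=z_i^{-1}$. From this definition we get $T_iz_i=\mathrm{Id}$ at once, and bijectivity of $z_i$ also gives $z_iT_i=\mathrm{Id}$, which the later items will need. This handles (1).

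For (2), fix $x\in H\otimes\mathbb{C}_q$. By definition, a highest central operator satisfies $z_ix=xz_i$ on $V^+$. Compose this identity with $T_i$ on both sides:
\[
xT_i=T_iz_ixT_i=T_ixz_iT_i=T_ix,
\]
where the first and last equalities use (1) and $z_iT_i=\mathrm{Id}$. Hence $[T_i,H\otimes\mathbb{C}_q]=0$. The same conjugation, applied to $d_jz_i-z_id_j=M_i\delta_{ij}z_i$, shows that $T_i$ shifts degree by $-M_ie_i$; this is not required but is a useful consistency check.

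Items (3)–(5) come from telescoping. For (3), take $v\in V^+$ and put $u=T_iv$. Then $z_iu=v$, and so
\[
T_iv-v=u-z_iu=-(z_iu-u)\in W,
\]
since $W$ is a subspace and $z_iu-u$ is one of its spanning vectors. For (5), I would write
\[
z_i^kv-v=\sum_{j=0}^{k-1}\left(z_i(z_i^jv)-z_i^jv\right).
\]
Each summand has the form $z_iu-u$ with $u=z_i^jv\in V^+$, so it lies in $W$. For (4), the analogous telescoping is
\[
T_i^kv-v=\sum_{j=0}^{k-1}\left(T_i(T_i^jv)-T_i^jv\right),
\]
and (3) applied to $T_i^jv$ puts every term in $W$. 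Induction on $k$ works equally well.

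No step here is genuinely hard. The only place needing care is the well-definedness of $T_i$ together with the two-sided inverse property $z_iT_i=\mathrm{Id}$, which (2) and (3) depend on. Both hinge entirely on the bijectivity proved in \cref{lemma 5.13}. I would state clearly that $T_i=z_i^{-1}$ and that $z_i$ preserves $V^+$, so that all the vectors $z_i^jv$ and $T_i^jv$ remain in $V^+$ and qualify as generators of $W$.
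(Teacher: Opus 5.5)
Your proof is correct and follows the paper's argument. You derive (1) and (2) from the bijectivity of $z_i$; the paper calls these immediate, and you usefully make (2) explicit through $z_iT_i=\mathrm{Id}$ and $xT_i=T_iz_ixT_i=T_ixz_iT_i=T_ix$. You get (3) by writing $v=z_iu$, and your telescoping sums for (4) and (5) are just the paper's induction on $k$ written out, with everything resting, as in the paper, on the bijectivity supplied by \cref{lemma 5.13}.
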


\begin{proof}
Statements (1) and (2) are immediate from the definitions.  

(3) For any $v \in V^+$, there exists $w \in V^+$ such that $z_i w = v$. Then
\[
T_i v - v = T_i (z_i w) - z_i w = w - z_i w = z_i(-w)-(-w)\in W,
\]
hence $T_i v - v \in W$.

(4) We proceed by induction on $k$. The base case $k = 1$ is exactly (3). Assume the statement holds for $k-1$, then for $v \in V^+$,
\begin{align*}
T_i^k v - v &= T_i(T_i^{k-1} v) - v \\
&= \big(T_i(T_i^{k-1} v) - T_i^{k-1} v\big) + \big(T_i^{k-1} v - v\big) \in W,
\end{align*}
because both terms belong to $W$ by the induction hypothesis and (3).  

(5) The proof is similar to (4) and follows by induction on $k$.
\end{proof}

\begin{proposition}\label{proposition 5.12}
    $W = \text{Span}_{\mathbb{C}}\{z_iv-v \mid  v \in V^+, \; 1 \leq i \leq n\}$ is a proper $H \otimes \mathbb{C}_q$-submodule of $V^+$. 
\end{proposition}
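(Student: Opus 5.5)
The statement has two parts: $W$ is an $H\otimes\mathbb{C}_q$-submodule, and $W\neq V^+$.

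\emph{Submodule property.} Each $z_i$ is a highest central operator, so it commutes with $H\otimes\mathbb{C}_q$. Hence for $x\in H\otimes\mathbb{C}_q$ and $v\in V^+$ we have $x\cdot(z_iv-v)=z_i(xv)-xv$. The vector $xv$ lies in $V^+$ by \cref{proposition 5.2}, so $x\cdot W\subseteq W$. Note that $W$ is not $D$-stable, since $z_i$ shifts degree by $M_ie_i$. This is why only $H\otimes\mathbb{C}_q$-stability is claimed.

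\emph{Properness.} This is the real content. I would use the $\mathbb{Z}^n$-grading together with the uniform bound on weight multiplicities. By \cref{lemma 5.3} and \cref{remark 5.4}, $V^+=\bigoplus_{r\in\mathbb{Z}^n}V^+_r$, where $V^+_r$ is the weight space of weight $\lambda+\delta_r$. Each $V^+_r$ is finite-dimensional, with dimension bounded by the constant $C$ from \cref{remark 5.8}. Let $\Gamma=\bigoplus_i\mathbb{Z}M_ie_i$, a finite-index sublattice of $\mathbb{Z}^n$. For each coset $\bar r\in\mathbb{Z}^n/\Gamma$ set $V^+_{\bar r}=\bigoplus_{s\in\bar r}V^+_s$.

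The operators $z_i$ are bijective by \cref{lemma 5.13}, they commute with each other (as elements of the abelian algebra $H\otimes Z(\mathbb{C}_q)$, by \cref{remark 5.10}), and $z_i$ maps $V^+_s$ isomorphically onto $V^+_{s+M_ie_i}$. Fix a coset representative $r$. Then $V^+_{\bar r}\cong V^+_r\otimes\mathbb{C}[\Gamma]$ via $z^{\gamma}u\mapsto u\otimes e^{\gamma}$ for $u\in V^+_r$. Under this identification $z_i$ becomes multiplication by $e^{M_ie_i}$. Therefore
\[
W\cap V^+_{\bar r}\;=\;V^+_r\otimes I,
\]
where $I$ is the augmentation ideal of the Laurent polynomial ring $\mathbb{C}[\Gamma]$ generated by the elements $e^{M_ie_i}-1$.

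Now define a linear functional-valued map $\pi_{\bar r}:V^+_{\bar r}\to V^+_r$ by $\pi_{\bar r}(z^\gamma u)=u$, that is, evaluation at $e^\gamma=1$. It is well defined and surjective, and its kernel is exactly $W\cap V^+_{\bar r}$. Since $W=\bigoplus_{\bar r}(W\cap V^+_{\bar r})$ (each generator $z_iv-v$ decomposes into coset components), we get
\[
V^+/W\cong\bigoplus_{\bar r}V^+_r .
\]
This is nonzero because $V^+\neq0$ by \cref{proposition 4.1}. It is also finite-dimensional, with dimension at most $[\mathbb{Z}^n:\Gamma]\cdot C$, which will be useful later. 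Hence $W\neq V^+$.

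\emph{Main obstacle.} The delicate point is justifying that $z_i$ really maps $V^+_s$ isomorphically onto $V^+_{s+M_ie_i}$ and that the $z_i$ commute. Both must hold so that $z^\gamma$ is well defined, which is what makes the identification with $V^+_r\otimes\mathbb{C}[\Gamma]$ legitimate. Bijectivity comes from \cref{lemma 5.13}, the degree shift from the definition of a highest central operator, and commutativity from $t_i^{M_i}\in Z(\mathbb{C}_q)$ together with \cref{remark 5.10}. If the $z_i$ are chosen with possibly different $h$, commutativity still holds because $H\otimes Z(\mathbb{C}_q)$ is abelian. Once this is in place, properness reduces to the elementary fact that the augmentation ideal of a Laurent polynomial ring is proper.
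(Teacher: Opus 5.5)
Your proof is correct, but it is organized differently from the paper's.

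\textbf{The paper's route.} It treats one operator at a time. It supposes a nonzero weight vector $v_0$ lies in $W_1=\operatorname{Span}_{\mathbb{C}}\{z_1v-v\mid v\in V^+\}$. It then asserts that ``by elementary weight considerations'' the expression $v_0=\sum_i c_i(z_1v_i-v_i)$ can be taken to telescope to $c_mz_1v_m-c_1v_1$. A contradiction follows because these two vectors have different degrees. The argument is then repeated for $z_2$ on $V^+/W_1$, and so on, tracking stability under $D_k=\operatorname{Span}\{d_{k+1},\dots,d_n\}$.

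\textbf{Your route.} You handle all $z_i$ simultaneously. Bijectivity, homogeneity and commutativity turn each $\Gamma$-coset block $V^+_{\bar r}$ into the free $\mathbb{C}[\Gamma]$-module $V^+_r\otimes\mathbb{C}[\Gamma]$, on which $z_i$ is multiplication by $e^{M_ie_i}$. So $W$ becomes $V^+_r\otimes I$, and properness reduces to properness of the augmentation ideal.

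\textbf{Comparison.} The paper's argument is shorter, but its key telescoping reduction is only asserted. Your evaluation map $\pi_{\bar r}$ is exactly the invariant that makes that step rigorous. You also avoid checking that the remaining $z_j$ descend to bijections on the successive quotients. In addition, you get the explicit description $V^+/W\cong\bigoplus_{\bar r}V^+_r$, nonzero and finite-dimensional, which is what is needed later in \cref{theorem 5.16}.

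\textbf{Two points to make explicit.}
First, every $M_i$ is nonzero: the exponent $k$ produced via \cref{lemma 5.1} in \cref{lemma 5.13} is positive. This makes $\Gamma$ free on the $M_ie_i$, so that $z^\gamma$ is well defined. It is genuinely needed for the statement itself, since a scalar $z_i\neq 1$ would force $W=V^+$.
Second, commutativity of the $z_i$ on $V$ uses more than centrality of $t_i^{M_i}$. The cocycle term in $[h\otimes t^a,h'\otimes t^b]$ lies in $HC_1(\mathbb{C}_q)$, so it acts by zero only because $HC_1(\mathbb{C}_q)$ acts trivially.
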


\begin{proof}
For each \(k = 1, \dots, n\), define
\[
D_k = \operatorname{Span}_{\mathbb{C}} \{ d_{k+1}, \dots, d_n \} \quad \text{and} \quad
W_1 = \operatorname{Span}_{\mathbb{C}} \{ z_1 v - v \mid v \in V^+ \}.
\]  
We first show that \(W_1\) is a proper \((H \otimes \mathbb{C}_q) \oplus D_1\)-submodule of \(V^+\).

Suppose not, then there exists some nonzero weight vector \(v_0 \in V^+\) of weight \(\mu\) lies in \(W_1\). So there exist finitely many nonzero weight vectors \(v_1, \dots, v_m \in V^+\) (which may be assumed to lie in distinct weight spaces) and nonzero scalars \(c_1, \dots, c_m \in \mathbb{C}\) such that
\[
v_0 = \sum_{i=1}^m c_i (z_1 v_i - v_i).
\]

Using elementary weight considerations, we may assume (without loss of generality) that 
\[
c_i (z_1 v_i) = c_{i+1} v_{i+1}, \quad \text{for } i = 1, \dots, m-1.
\]  
Then 
\[
v_0 = c_m (z_1 v_m) - c_1 v_1.
\]  
Since \(z_1\) has nonzero degree, \(z_1 v_m\) and \(v_1\) lie in distinct derivation weight spaces. Because the decomposition into different weight spaces is direct, a nonzero vector from one weight space cannot be equal to a nonzero combination of vectors from another weight space. Hence we must have \(c_1 = 0\) or \(c_m = 0\), a contradiction. This shows that \(W_1\) is proper.

Now consider 
\[
W_2 = \operatorname{Span}_{\mathbb{C}} \{ z_2 v - v \mid v \in V^+/W_1 \}.
\]  
Repeating the same argument, we can prove \(W_2\) is a proper \((H \otimes \mathbb{C}_q) \oplus D_2\)-submodule of \(V^+/W_1\). Iterating this process for \(k = 1, \dots, n\), we conclude that
\[
W = \operatorname{Span}_{\mathbb{C}} \{ z_i v - v \mid v \in V^+, \; 1 \le i \le n \}
\]  
is a proper \(H \otimes \mathbb{C}_q\)-submodule of \(V^+\), as claimed.
\end{proof}

\begin{theorem}\label{theorem 5.16}
    Let $V' = \mathcal{U}(\mathfrak{g}\otimes \mathbb{C}_q)W$ and $\overline{V}' = V/V'$. Then
    \begin{enumerate}
        \item $V'$ is a proper $\mathfrak{g} \otimes \mathbb{C}_q$-submodule of $V$.

        \item $\overline{V}'$ is a finite-dimensional $\mathfrak{g}\otimes \mathbb{C}_q$-module.

        \item $V$ admits a finite-dimensional irreducible quotient over $\mathfrak{g}\otimes \mathbb{C}_q$, i.e. there exists a $\mathfrak{g}\otimes \mathbb{C}_q$-submodule $V''$ of $V$ containing $V'$ such that $V/V''$ is a finite-dimensional irreducible module for $\mathfrak{g}\otimes \mathbb{C}_q$. 
    \end{enumerate}
\end{theorem}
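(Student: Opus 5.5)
We prove (1)--(3) in order, modelled on the standard passage from graded to non-graded modules for loop algebras.

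\textbf{Step (1).} By \cref{proposition 5.2}(2) and the PBW decomposition $\mathcal{U}(\mathfrak{g}\otimes\mathbb{C}_q)=\mathcal{U}(N^-\otimes\mathbb{C}_q)\mathcal{U}(H\otimes\mathbb{C}_q)\mathcal{U}(N^+\otimes\mathbb{C}_q)$, we get
\[
V'=\mathcal{U}(N^-\otimes\mathbb{C}_q)\,\mathcal{U}(H\otimes\mathbb{C}_q)\,W .
\]
Here $N^+\otimes\mathbb{C}_q$ kills $W\subseteq V^+$, and $W$ is $H\otimes\mathbb{C}_q$-stable by \cref{proposition 5.12}, so $V'=\mathcal{U}(N^-\otimes\mathbb{C}_q)W$. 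The key claim is $V'\cap V^+=W$, which by \cref{proposition 5.12} makes $V'$ proper. To see it, decompose $V$ by $H\otimes 1$-weights: $V^+$ is exactly the $\lambda$-weight part, since $H\otimes1$ acts by $\lambda$ on $V^+$ and every other vector of $\mathcal{U}(N^-\otimes\mathbb{C}_q)V^+$ has $H$-weight strictly below $\lambda$. Elements of $\mathcal{U}(N^-\otimes\mathbb{C}_q)$ of positive $\dot Q^+$-degree lower the $H$-weight, so the $\lambda$-component of $V'$ is $\mathcal{U}(N^-\otimes\mathbb{C}_q)_0W=W$.

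\textbf{Step (2).} On $\overline{V}'$ each $z_i$ acts as the identity on the image of $V^+$. Since $z_i=h\otimes t_i^{M_i}$ is a genuine Lie algebra element with $t_i^{M_i}$ central and commutes with $H\otimes\mathbb{C}_q$, this forces the image $\overline{V^+}$ of $V^+$ to be spanned by the images of weight vectors whose $d$-degrees lie in a fixed set of representatives of $\mathbb{Z}^n/\bigoplus_i M_i\mathbb{Z}$. That set is finite, so by the uniform bound of \cref{remark 5.8} we get $\dim\overline{V^+}<\infty$.

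To pass from $\overline{V^+}$ to $\overline{V}'=\mathcal{U}(N^-\otimes\mathbb{C}_q)\overline{V^+}$, we show that $\overline{V}'$ is a finite-dimensional-weight module for $\mathfrak{g}\otimes\mathbb{C}_q$ with finitely many $H$-weights. Finiteness of the weights follows from \cref{lemma 5.8}: the $H$-weights of $V$ lie in $\mathcal{W}_0$-orbits of $\lambda_1,\dots,\lambda_l$. Finite-dimensionality of each $H$-weight space of $\overline{V}'$ is the hard point. There we would again use that $z_i$ acts as the identity on the quotient. The operators $h\otimes t_i^{M_i}$ are not central in all of $\mathfrak{g}\otimes\mathbb{C}_q$, so we would first replace $z_i$ by central elements of $\mathcal{U}$ acting as scalars, or argue with generators: $\overline{V}'$ is generated by the finite-dimensional $\overline{V^+}$ over the finitely generated algebra $\mathfrak{g}\otimes\mathbb{C}_q$. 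Then each $H$-weight space is spanned by the images of finitely many $d$-homogeneous pieces, because $x\otimes t^{a+M_ie_i}$ and $x\otimes t^a$ agree modulo $V'$ on these generators. Combining this with the uniform bound $C$ gives $\dim\overline{V}'<\infty$. \emph{This is the step I expect to be the main obstacle}: one must justify carefully that identifying degrees modulo $\bigoplus M_i\mathbb{Z}$ propagates from $V^+$ to all of $V$ modulo $V'$. I would try to prove that $(x\otimes t^{a+M_ie_i}-x\otimes t^a z_i)$ annihilates $V^+$ modulo $V'$ for $x\in N^-$, by induction on PBW length.

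\textbf{Step (3).} This is immediate from (2). The module $\overline{V}'$ is nonzero, because it contains $V^+/W\neq0$ by (1), and it is finite-dimensional. Choose a maximal proper submodule of $\overline{V}'$; one exists by finite dimensionality, for instance a submodule of maximal dimension among the proper ones. Its preimage $V''\supseteq V'$ then gives an irreducible finite-dimensional quotient $V/V''$.
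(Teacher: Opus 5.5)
\textbf{Verdict: genuine gap in Step (2).}

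Your Steps (1) and (3) are correct. Step (1) is in fact argued more carefully than in the paper: the $H$-weight-$\lambda$ component of $V'=\mathcal{U}(N^-\otimes\mathbb{C}_q)W$ is $W$, so $V'\cap V^+=W\neq V^+$. The first half of Step (2), $\dim V^+/W<\infty$ via the bijectivity of the $z_i$, is also fine.

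The gap is the passage from $V^+/W$ to $V/V'$. You flag it but do not prove it, and the mechanism you propose is false. On $V^+$, modulo $V'$, neither $x\otimes t^{r+M_ie_i}\equiv x\otimes t^{r}$ nor $x\otimes t^{r+M_ie_i}\equiv (x\otimes t^{r})z_i$ holds, even up to a scalar. So no induction on PBW length can give it.

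A counterexample already exists for the commutative torus. Let $\overline{V}$ be the tensor product of evaluation modules $V(\lambda_1)$ at $\xi$ and $V(\lambda_2)$ at $\zeta$, where $\xi,\zeta\in(\mathbb{C}^\times)^n$ and $|\xi_i|\neq|\zeta_i|$. Let $V$ be a graded component of $\overline{V}\otimes A_n$, take $v_0=(v_1\otimes v_2)\otimes t^m\in V^+$, and let $f$ be a lowering root vector with $fv_1\neq 0\neq fv_2$.
\begin{itemize}
\item Suppose $z_i$ acts on $v_1\otimes v_2\in\overline{V}$ by the scalar $\kappa_i\neq 0$, and choose $c$ with $c_i^{M_i}\kappa_i=1$.
\item Then $w\otimes t^s\mapsto c^s w$ is a module map from $V$ to a twisted copy of $\overline{V}$. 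It kills $W$, hence kills $V'$.
\item It sends $(f\otimes t^r)v_0$ and $(f\otimes t^{r+M_ie_i})v_0$ to nonzero multiples of $\xi^r fv_1\otimes v_2+\zeta^r v_1\otimes fv_2$ and $\xi^{r+M_ie_i} fv_1\otimes v_2+\zeta^{r+M_ie_i} v_1\otimes fv_2$ respectively. These are not proportional.
\end{itemize}
Your alternative, replacing $z_i$ by central elements of $\mathcal{U}$ acting by scalars, cannot help either. A homogeneous element of nonzero degree can act on a graded module only by the scalar $0$, so it cannot identify different degrees.

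\textbf{The missing idea.} Extend each $z_i$ to an operator $Z_i$ on all of $V$, of degree $M_ie_i$, commuting with $\mathfrak{g}\otimes\mathbb{C}_q$. Note that $Z_i$ is not the action of $h\otimes t_i^{M_i}$.
\begin{itemize}
\item Since $z_i$ commutes with $(H\oplus N^+)\otimes\mathbb{C}_q$ on $V^+$, the map $u\otimes v\mapsto u\otimes z_iv$ is a well-defined homogeneous endomorphism of $\mathcal{U}(\mathfrak{g}\otimes\mathbb{C}_q)\otimes_{\mathcal{U}((H\oplus N^+)\otimes\mathbb{C}_q)}V^+$, with $D$ acting on this induced module in the obvious way.
\item Let $K$ be the kernel of the natural surjection onto $V$. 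Since $K\cap(1\otimes V^+)=0$, the map sends $K$ to a graded submodule with zero $H$-weight-$\lambda$ component.
\item The image of that submodule in $V$ misses $V_\lambda=V^+$. It is therefore a proper graded submodule, hence $0$ by irreducibility, so the map descends to $Z_i$.
\item $Z_i$ is invertible, because $T_i$ extends in the same way.
\end{itemize}
Now $u(z_iv-v)=(Z_i-1)(uv)$ gives $V'=\sum_i(Z_i-1)V$. Hence every weight vector is congruent modulo $V'$ to one whose $\delta$-degree lies in a fixed finite set of representatives of $\mathbb{Z}^n/\bigoplus_i M_i\mathbb{Z}$. Together with the finitely many $H$-weights from \cref{lemma 5.8} and finite-dimensional weight spaces, this yields $\dim V/V'<\infty$.

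The paper's own argument for (2) does not close this point either. Its finite set $S_\mu$ depends on the degrees $r_i$ of the $u_i$, hence on $v$. The extension $Z_i$ is exactly what makes that degree reduction legitimate.
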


\begin{proof}
    \begin{enumerate}
        \item This follows from \cref{proposition 5.2} and \cref{proposition 5.12}.

        \item By \cref{lemma 5.8}, the set of weights $P(V)$ is contained in a finite union of $\delta$–cosets; hence there exist finitely many representatives $\mu^{(1)}, \dots, \mu^{(s)}$ such that
\[
V = \bigoplus_{j=1}^s \widetilde{V}_{\mu^{(j)}}, \quad \text{where} \quad
\widetilde{V}_{\mu^{(j)}} = \bigoplus_{r \in \mathbb{Z}^n} V_{\mu^{(j)} + \delta_r}.
\]
Since $V'$ is also $\delta$–graded, we obtain corresponding decompositions
\[
V' = \bigoplus_{j=1}^s \widetilde{V}'_{\mu^{(j)}}, \qquad \text{where} \quad
\widetilde{V}'_{\mu^{(j)}} = \widetilde{V}_{\mu^{(j)}} \cap V'.
\]
Consequently
\[
\overline{V} = V / V' \cong \bigoplus_{j=1}^s \big( \widetilde{V}_{\mu^{(j)}} / \widetilde{V}'_{\mu^{(j)}} \big),
\]
and it suffices to prove that for a fixed $\mu$, the quotient $\widetilde{V}_\mu / \widetilde{V}'_\mu$ is finite-dimensional.

Fix such a $\mu$ and let $v \in \widetilde{V}_\mu$, since $V$ is generated by the highest-weight space $V^+$, we may write
\[
v = \sum_{i=1}^m u_i w_i, \quad 
u_i \in \mathcal{U}(\mathfrak{g} \otimes \mathbb{C}_q), \; w_i \in V^+.
\]
Let $\mathcal{Z}$ denote the commutative subalgebra of $\mathcal{U}(H \otimes Z(\mathbb{C}_q))$ generated by the bijective highest-central operators $z_1, \dots, z_n$ introduced in \cref{lemma 5.13}. Each $z_j$ has homogeneous $\delta$–degree $p^{(j)} \in \mathbb{Z}^n$ and acts bijectively on $V^+$. 
Consider the collection of all vectors obtained by applying monomials in the $z_j$ to $w_i \in V^+$,
\[
\{ z_1^{k_1} \cdots z_n^{k_n} w_i \mid (k_1,\dots,k_n) \in \mathbb{Z}^n \}.
\]
Each such vector has weight $\operatorname{wt}(w_i)+\delta_{k_1 p^{(1)} + \cdots + k_n p^{(n)}}$. 
Since $t_j^{N_j} \in Z(\mathbb{C}_q)$ for every $j$, the degrees $p^{(j)}$ are periodic modulo $N_j$ in each coordinate and hence only finitely many distinct weights of this form occur.
Since each weight space of $V$ is finite-dimensional therefore
\[
\operatorname{span}\{\, z_1^{k_1} \cdots z_n^{k_n} w_i \mid (k_1, \dots, k_n) \in \mathbb{Z}^n \,\}
\subseteq \bigoplus_{|k_j| < N_j} V_{\mathrm{wt}(w_i) + \delta_{k_1 p^{(1)} + \cdots + k_n p^{(n)}}}
\]
is finite-dimensional. On this finite-dimensional space, each $z_j$ satisfies a nontrivial polynomial relation. Hence there exists $z'_i \in \mathcal{U}(\mathcal{Z})$ satisfying $(1 - z'_i) w_i \in W$.
Now we can rewrite each term in the expansion of $v$ as
\[
u_i w_i = u_i \big( (1 - z'_i) w_i \big) + u_i (z'_i w_i).
\]
By construction, $u_i \big( (1 - z'_i) w_i \big) \in \mathcal{U}(\mathfrak{g} \otimes \mathbb{C}_q) W = V'$, 
so these terms vanish in the quotient $\widetilde{V}_\mu / \widetilde{V}'_\mu$. 
Each $z'_i$ is a finite $\mathbb{C}$–linear combination of monomials in the commuting central operators $z_1, \dots, z_n$, say
\[
z'_i = \sum_{\alpha \in F_i} c_{\alpha} z_1^{\alpha_1} \cdots z_n^{\alpha_n},
\]
where $F_i \subset \mathbb{Z}^n$ is a finite index set and $c_{\alpha} \in \mathbb{C}$. 
Since each $z_j$ has degree $p^{(j)} \in \mathbb{Z}^n$, the degree of $z'_i$ is contained in the finite set 
\[
\mathrm{deg}(z'_i) = \{\, \alpha_1 p^{(1)} + \cdots + \alpha_n p^{(n)} 
   \mid \alpha \in F_i \,\} \subset \mathbb{Z}^n.
\]
If $u_i$ has homogeneous $\delta$–degree $r_i$, then every term of the form 
$u_i(z_1^{\alpha_1} \cdots z_n^{\alpha_n} w_i)$ lies in the weight space
\[
V_{\mu + \delta_{\,r_i + \alpha_1 p^{(1)} + \cdots + \alpha_n p^{(n)}}}.
\]
Hence all vectors $u_i(z'_i w_i)$ belong to weight spaces whose $\delta$–shifts from $\mu$ 
lie in the finite set 
\[
S_\mu = \bigcup_{i=1}^m \{\, r_i + \beta \mid \beta \in \mathrm{deg}(z'_i) \,\} 
   \subset \mathbb{Z}^n.
\]
It follows that the sum $\sum_{i=1}^{m}u_i(z'_i w_i)$ lies in the finite direct sum
\[
N(\mu) = \bigoplus_{\rho \in S_\mu} V_{\mu + \delta_\rho} \subset \widetilde{V}_\mu.
\]
Therefore $v$ is congruent modulo $\widetilde{V}'_\mu$ to an element of $N(\mu)$, 
which is a fixed finite-dimensional subspace of $\widetilde{V}_\mu$ depending only on $\mu$ 
and not on the choice of $v$. 
Since each weight space $V_{\mu + \delta_\rho}$ is finite-dimensional, 
so is $N(\mu)$.
Hence $\widetilde{V}_\mu / \widetilde{V}'_\mu$ is finite-dimensional for each $\mu$ 
and consequently $\overline{V} = V / V'$ is finite-dimensional. This completes the proof.
 
        \item We consider any strictly increasing sequence of $\mathfrak{g} \otimes \mathbb{C}_q$-submodules of $V$ containing $V'$, say
        $$
        V' \subset V_1 \subset \cdots .
        $$
        This yields a corresponding decreasing sequence of quotient modules over $\mathfrak{g} \otimes \mathbb{C}_q$, for which we have
        $$
        0 \neq \dim(V/V_1) < \dim(V/V').
        $$
        By (2), such a chain of submodules cannot continue indefinitely and therefore must terminate. Consequently there exists a proper maximal submodule $N \subset V$ containing $V'$. From this point on, we denote the irreducible quotient module $V/N$ by $\overline{V}$.

     \end{enumerate}
\end{proof}

\section{Recovering the original module}
In this section, our goal is to recover the irreducible $\hat{\tau}(d,q)$-module $V$ from the finite-dimensional irreducible $\mathfrak{g} \otimes \mathbb{C}_q$-module $\overline{V}$ defined at the end of the previous section.

Since $V$ is an irreducible integrable $\hat{\tau}(d,q)$-module and $\overline{V}$ is a finite-dimensional irreducible quotient of $V$, it follows that $\overline{V}$ is a finite-dimensional irreducible integrable module for $\tau(d,q).$ Thus $\overline{V}$ may be viewed as the non-graded module corresponding to the graded module $V$ we have fixed in the beginning.

Let $A_n = \mathbb{C}[t_1^{\pm 1}, \cdots , t_n^{\pm 1}]$ be the Laurent polynomial ring in $n$ commuting variables. We will now define an action of $\hat{\tau}(d,q)$ on the tensor product $\overline{V} \otimes A_n$.\\
For $m = (m_1, \cdots ,m_n) \in \mathbb{Z}^n$, any homogeneous element $x \otimes t^r \in \hat{\tau}(d,q)$ of degree $r$, and any $w \in \overline{V}$, define 
\[
(x \otimes t^r)\cdot (w \otimes t^m) = ((x \otimes t^r)\cdot w) \otimes t^{r+m},
\]
\[
d_i\cdot (w \otimes t^m) = m_i(w \otimes t^m), \quad 1 \le  i \le n.
\]
It is not difficult to check that these operations satisfy the defining Lie bracket relations of $\hat{\tau}(d,q)$, therefore they endow $\overline{V}\otimes A_n$ with the structure of a left $\hat{\tau}(d,q)$-module.

Since $V$ is $\mathbb{Z}^n$-graded as $\hat{\tau}(d,q)$-module,
\[
V = \bigoplus_{m \in \mathbb{Z}^n} V_m,
\]
the quotient map $\psi: V \to \overline{V}$ induces a natural map 
\begin{align*}
\Psi : V & \rightarrow \overline{V} \otimes A_n\\
v & \mapsto \bar{v}\otimes t^m,
\end{align*}
where $v \in V_m \; \text{for some} \; m \in \mathbb{Z}^n \; \text{and} \; \bar{v}= \psi(v)$.
It is easy to check that $\Psi$ is a nonzero homomorphism of the $\hat{\tau}(d,q)$-module. Therefore by the irreducibility of $V$, $\Psi$ is an injective homomorphism. Implies that $\Psi(V)$ is a nonzero irreducible $\hat{\tau}(d,q)$-submodule of $\overline{V} \otimes A_n$.

\begin{theorem}
\leavevmode
    \begin{enumerate}
        \item $\overline{V} \otimes A_n$ is a completely reducible integrable module for $\hat{\tau}(d,q)$ with respect to $(H \otimes \mathbb{C}_q) \oplus D$ with finite number of isomorphic irreducible components upto a grade shift.
        \item $V$ is isomorphic to an irreducible component of $\overline{V} \otimes A_n$ as the $\hat{\tau}(d,q)$-module.
    \end{enumerate}
\end{theorem}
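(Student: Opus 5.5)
I will follow the standard loop-module strategy (as in Eswara Rao's treatment of graded versus non-graded modules for toroidal algebras), adapted to $\mathbb{C}_q$. The key input is that $\overline{V}$ is a finite-dimensional irreducible module for $\mathfrak{g}\otimes\mathbb{C}_q$ on which $HC_1(\mathbb{C}_q)$ acts trivially.

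The first step is to exploit finite-dimensionality through a stabilizer lattice. Set $\Gamma=\{m\in\mathbb{Z}^n \mid \overline{V}\cong\overline{V}^{(m)}\}$, where $\overline{V}^{(m)}$ is $\overline{V}$ twisted by the grading automorphism $x\otimes t^r\mapsto x\otimes t^r$ composed with the character $t^r\mapsto \zeta^{r}$. Equivalently, decompose by the action of the commutative algebra generated by the images of $I\otimes Z(\mathbb{C}_q)$ and of the operators $h\otimes t^a$, $a\in\operatorname{rad}f$.

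Since $\overline{V}$ is irreducible and finite-dimensional, Schur's lemma makes the central elements $h\otimes t^a$ with $a\in\operatorname{rad}f$ act by scalars. This is where the highest central operators $z_i$ of \cref{lemma 5.13} descend to $\overline{V}$: they act by $1$, because $W\subseteq V'$. Hence $\overline{V}$ is annihilated by $x\otimes(t^{a}-\chi(a))$ for $a$ in a finite-index sublattice $G=\bigoplus_i M_i\mathbb{Z}e_i\subseteq\operatorname{rad}f$.

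Next I decompose $\overline{V}\otimes A_n$. For each coset $\bar m\in\mathbb{Z}^n/G$ put
\[
U_{\bar m}=\operatorname{span}\{w\otimes t^{m+g}\mid w\in\overline{V},\ g\in G\},
\]
which is a $\hat{\tau}(d,q)$-submodule up to the shift by $r$. More precisely, $\overline{V}\otimes A_n=\bigoplus_{\bar m}(\overline{V}\otimes t^m\mathbb{C}[G])$. Multiplication by $t^{g}$ on the $A_n$ factor commutes with $\tilde{\tau}(d,q)$ and shifts $D$-degree. This gives central operators of degrees $g\in G$, so the module is a finite sum of pieces, one for each of the $|\mathbb{Z}^n/G|$ residues. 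I then show complete reducibility. The module is a weight module with finite-dimensional weight spaces: the weight space for degree $m$ is $\overline{V}_\lambda\otimes t^m$. Integrability is inherited from $\overline{V}$, since each $x\otimes t^r$ with $x$ nilpotent acts nilpotently on the finite-dimensional $\overline{V}$. For complete reducibility I use the $(H\otimes\mathbb{C}_q)\oplus D$-structure on highest weight vectors, $\overline{V}^+\otimes A_n$. The operators $h\otimes t^a$ with $a\in G$ act as $\chi\cdot t^a$, which lets me diagonalize over the finite group $\mathbb{Z}^n/G$, or over its dual character group. Each isotypic piece is generated by its highest weight space, which is irreducible over $(H\otimes\mathbb{C}_q)\oplus D$, so by the argument of \cref{proposition 5.2} it generates an irreducible submodule. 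There are finitely many such pieces, isomorphic up to grade shift, since shifting $m$ by $e_j$ permutes them.

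For (2) I use the injective homomorphism $\Psi$ constructed above. Its image $\Psi(V)$ is an irreducible submodule of $\overline{V}\otimes A_n$, and by (1) it must coincide with one of the irreducible components. The main obstacle is the complete reducibility step. One has to check that the highest weight vectors of $\overline{V}\otimes A_n$ are exactly $\overline{V}^+\otimes A_n$, and that the finite abelian group $\mathbb{Z}^n/G$ acts by commuting operators that split the module into irreducibles. I would handle this using Weyl-module finiteness, namely that $\overline{V}^+$ is a finite-dimensional irreducible $H\otimes\mathbb{C}_q$-module, together with the rationality of $q$, which makes $\mathbb{C}_q$ finite over its center.
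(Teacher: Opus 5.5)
Your proposal has a genuine gap: the decomposition in (1) rests on a claim that is false in general.

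\textbf{The key claim fails.} You assert that $\overline V$ is annihilated by $x\otimes(t^a-\chi(a))$ for all $x\in\mathfrak g$ and all $a$ in a finite-index sublattice $G\subseteq\operatorname{rad}f$. Your justification does not work, for two reasons. First, for non-scalar $h\in H$ and $a\in\operatorname{rad}f$, the element $h\otimes t^a$ is not central in $\mathfrak g\otimes\mathbb C_q$, since $[h\otimes t^a,E_{ij}\otimes t^b]=(h_i-h_j)E_{ij}\otimes t^at^b$. So Schur's lemma only controls $I\otimes Z(\mathbb C_q)$, which acts by $0$. Second, $W\subseteq V'$ only gives $z_i=1$ on the image of $V^+$, not on all of $\overline V$.

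The conclusion also contradicts the classification in Section 7. Let $\overline V$ be the pullback of $\rho_1\otimes\rho_2$, with $\rho_1,\rho_2$ nontrivial on two different summands of $\bigoplus_K\mathfrak{sl}_{dN}(\mathbb C)$. Their central characters $\chi_1\neq\chi_2$ on $Z(\mathbb C_q)$ differ on some $t_j^{N_j}$, taking values $a_{jk}\neq a_{jk'}$. For $0\neq x\in\mathfrak{sl}_d(\mathbb C)$ and $a\in\operatorname{rad}f$, the element $x\otimes t^a$ acts by $\chi_1(a)X_1+\chi_2(a)X_2$. Here $X_1,X_2$ are the linearly independent actions of $x\otimes 1$ through the two summands. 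This equals $\chi(a)(X_1+X_2)$ on a finite-index $G$ only if $a_{jk}/a_{jk'}$ is a root of unity, which fails generically. (With commuting variables, $V(\lambda)_{b_1}\otimes V(\mu)_{b_2}$ with $b_1/b_2$ of infinite order is the same counterexample.)

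\textbf{Nothing is split even granting $G$.} The spaces $U_{\bar m}$ are not submodules, because $x\otimes t^r$ maps $U_{\bar m}$ into $U_{\overline{m+r}}$. Multiplication by $t^g$ on $A_n$ is a central operator of nonzero degree, available for every $g\in\mathbb Z^n$. Such operators can identify components up to grade shift, but they cannot cut the module into pieces. You also only assert that the highest weight space of each piece is irreducible over $(H\otimes\mathbb C_q)\oplus D$; this is exactly what must be proved. By contrast, $(\overline V\otimes A_n)^+=\overline V^+\otimes A_n$, which you flag as the obstacle, is immediate from the grading.

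\textbf{What is needed.} You need \emph{degree-zero} endomorphisms of $\overline V\otimes A_n$. For nontrivial $\overline V$, let $\Sigma$ be the set of $\zeta\in(\mathbb C^\times)^n$ such that the twist $\overline V^{\zeta}$ by $x\otimes t^r\mapsto\zeta^r x\otimes t^r$ is isomorphic to $\overline V$; this is a finite group. Each isomorphism $\phi_\zeta\colon\overline V\to\overline V^{\zeta}$ yields the module endomorphism $w\otimes t^m\mapsto\zeta^{-m}\phi_\zeta(w)\otimes t^m$. The decomposition should come from these operators, and the relevant finite-index lattice is the annihilator of $\Sigma$ in $\mathbb Z^n$; this is what your $\Gamma$ should have been. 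One then still has to show that each piece has irreducible highest weight space. Irreducibility of the piece follows, since every nonzero submodule meets $\overline V^+\otimes A_n$ ($\overline V$ being finite-dimensional).

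The paper does not carry this out itself; it invokes Proposition 3.8 and Theorem 3.9 of \cite{[13]}. Your part (2) is fine once (1) is in place.
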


\begin{proof}
    The proof of the theorem is identical to the proof of Proposition 3.8 and Theorem 3.9 of \cite{[13]}. There we need to assume the quantum torus $\mathbb{C}_q$ to be commutative, but the proofs are valid for any $\mathbb{C}_q$.
\end{proof}

\section{the final classification theorem}

The last section clearly reduces our study of irreducible integrable $\hat{\tau}(d,q)$ modules with finite-dimensional weight spaces to finite-dimensional irreducible $\tau(d,q)$ modules. These finite-dimensional irreducible modules have been completely classified in \cite{[18]}. In this Section , we recall the classification of $\tau(d,q)$-modules given in \cite{[18]} and henceforth classify all the irreducible integrable $\hat{\tau}(d,q)$-modules with finite-dimensional weight spaces. We will denote the set of all nonzero complex numbers by $\mathbb{C}^{\times}$.\\
Let $N_j$ be the least common multiple of orders of $q_{ij} \; (\text{i.e.} \; t_j^{N_j} \in Z(\mathbb{C}_q))$. Let us pick distinct scalars $a_{jk} \in \mathbb{C}^\times, \; 1 \leq k \leq M_j$ and define 
\[
Q_j(t_j) = \prod_{k=1}^{M_j} (t_j^{N_j} - a_{jk}), \quad \text{for} \quad 1 \le j \le n.
\]
Let $\mathcal{J}$ be the ideal of $\mathbb{C}_q$ generated by $Q_j(t_j)$'s and
\[
\tilde{\mathcal{J}} = \sum_{i \neq j} \mathcal{J} E_{ij} \; + \; \sum_{i \neq j} [\mathbb{C}_q E_{ij},\mathcal{J} E_{ji}], 
\]
then
\[
\tau(d,q)/\tilde{\mathcal{J}} \cong \bigoplus_{K-copies} \mathfrak{sl}_{dN}(\mathbb{C}),
\]
by extending
\[
X \otimes t^m \mapsto X \otimes \pi(t^m),
\]
where 
\[
\pi: \mathbb{C}_q \to \bigoplus_{K-copies}M_N(\mathbb{C})
\]
is the algebra homomorphism induced by quotienting by $\mathcal{J}$.
Let $(\rho,V)$ be an irreducible representation for $\bigoplus_{K}\mathfrak{sl}_{dN}(\mathbb{C})$, then $(\rho\circ\tilde{\pi},V)$ becomes a representation for $\tau(d,q)$ via a pull-back along
\[
\tau(d,q) \xrightarrow{\tilde{\pi}} \bigoplus_{K-copies} \mathfrak{sl}_{dN}(\mathbb{C}) \xrightarrow{\rho} \text{End}(V).
\]

\begin{theorem} {\normalfont(\cite[Theorem 6.6]{[18]})}\\
    Every finite-dimensional irreducible $\tau(d,q)$-module arises as the pullback of a finite-dimensional irreducible module for a direct sum of Lie algebras 
    \[
    \bigoplus_{K-copies} \mathfrak{sl}_{dN}(\mathbb{C})
    \]
    via the canonical surjective homomorphism of the Lie algebra
    \[
    \tilde{\pi}: \tau(d,q) \to \bigoplus_{K-copies} \mathfrak{sl}_{dN}(\mathbb{C}),
    \]
    where $N = N_1 N_2 \cdots N_n$, $K = M_1 M_2 \cdots M_n$ and each $M_j$ is the number of distinct roots of $Q_j$.
    
    In other words, every finite-dimensional irreducible representation of $\tau(d,q)$ is of the form  $(\rho \circ \tilde{\pi},V)$ for some finite-dimensional irreducible representation $(\rho,V)$ of 
    \[
    \bigoplus_{K-copies} \mathfrak{sl}_{dN}(\mathbb{C}).
    \]
\end{theorem}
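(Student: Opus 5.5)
The plan is to show that the kernel of any finite-dimensional irreducible representation $\rho$ of $\tau(d,q)$ contains an ideal of the form $\tilde{\mathcal{J}}$ for suitable polynomials $Q_j$. Once that is done, the statement follows from the isomorphism $\tau(d,q)/\tilde{\mathcal{J}}\cong\bigoplus_K\mathfrak{sl}_{dN}(\mathbb{C})$ recalled above.

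\textbf{Step 1: produce the polynomials $Q_j$.} Let $V$ be a finite-dimensional irreducible $\tau(d,q)$-module. Since $\operatorname{End}(V)$ is finite-dimensional, for each $j$ and each fixed $X=E_{ik}$ with $i\neq k$, the operators $\rho(X\otimes t^m t_j^{lN_j})$, $l\in\mathbb{Z}$, are linearly dependent. Because $t_j^{N_j}$ is central in $\mathbb{C}_q$, this gives a nonzero Laurent polynomial $P$ in $t_j^{N_j}$ with $\rho(X\otimes t^mP)=0$. Multiplying by a power of $t_j^{N_j}$, I will assume $P$ has nonzero constant term.

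\textbf{Step 2: make the kernel an ideal.} I will show that
\[
I_X=\{a\in\mathbb{C}_q \mid \rho(X\otimes a)=0\}
\]
is a two-sided ideal of $\mathbb{C}_q$ that does not depend on the choice of off-diagonal $X$. Bracketing with elements $E_{kk'}\otimes t^b$ moves $a$ between off-diagonal positions and multiplies it on the left or right by monomials. Then $I_X$ contains a central element $P_j(t_j^{N_j})$ for every $j$. Taking the product of these, I replace each $P_j$ by its radical $Q_j$, the product of distinct linear factors in $t_j^{N_j}$. The point is that $\rho(\tau(d,q)\otimes\text{ideal})$ is a solvable, and by the following argument nilpotent, ideal acting on an irreducible module, so it acts trivially and the nilpotent part can be dropped.

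\textbf{Step 3: pass to the quotient.} Once $\mathcal{J}E_{ik}\subset\ker\rho$ for all $i\neq k$, brackets show that $[\mathbb{C}_qE_{ik},\mathcal{J}E_{ki}]\subset\ker\rho$ as well, so $\tilde{\mathcal{J}}\subset\ker\rho$. Hence $\rho$ factors through $\tilde\pi$. Enlarging the sets of roots $\{a_{jk}\}$ only enlarges the quotient, so the number of copies $K=M_1\cdots M_n$ matches the statement.

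\textbf{Main obstacle.} I expect the hardest part to be the radical reduction in Step 2. The quotient $\mathbb{C}_q/(P_j)$ is a finite-dimensional algebra whose Jacobson radical $R$ corresponds to repeated roots. I would show that $\tau(d,q)$-elements with coefficients in $R$ generate a nilpotent ideal of the image Lie algebra. By Lie's theorem combined with irreducibility, an ideal acting nilpotently must act by zero. So $\rho$ factors through $\mathfrak{sl}_d$ over the semisimple quotient $\mathbb{C}_q/(Q_j)\cong\bigoplus_K M_N(\mathbb{C})$, whose $\mathfrak{sl}$-part is $\bigoplus_K\mathfrak{sl}_{dN}(\mathbb{C})$.
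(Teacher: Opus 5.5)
The paper contains no proof of this statement. It is quoted from \cite[Theorem 6.6]{[18]}, so there is no internal argument to compare yours with. Your outline is the standard one for finite-dimensional modules over loop-type algebras: a cofinite ideal of $\mathbb{C}_q$, generated by central polynomials, lies in the kernel, and the part coming from repeated roots is then discarded. Steps 1--3 are sound in substance.

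The step you singled out as the main obstacle, however, contains an inference that is false as stated. From $M_d(R)$ being a nilpotent associative ideal you correctly get that the elements with coefficients in $R$ span an ideal $\mathfrak n$ whose image in $\rho(\tau(d,q))$ is a \emph{nilpotent Lie ideal}. You then treat this as ``$\mathfrak n$ acts nilpotently, hence by zero''. Nothing makes $\rho(x)$, for $x\in\mathfrak n$, a nilpotent operator:
\begin{itemize}
\item associative nilpotency does not pass through a Lie homomorphism;
\item a nilpotent, even central, ideal can act on an irreducible module by nonzero scalars, as the centre of $\mathfrak{gl}_m$ does on $\mathbb{C}^m$.
\end{itemize}
Lie's theorem plus irreducibility gives only that $\mathfrak n$ acts through a character $\lambda$, because the common eigenspace of a solvable ideal is stable under the whole algebra. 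The missing ingredient is that $\tau(d,q)=\mathfrak{sl}_d(\mathbb{C}_q)$ is perfect; this is implicit in the paper, which uses its universal central extension. Then $\rho(\tau(d,q))\subseteq\mathfrak{sl}(V)$, so $\lambda(x)\dim V=\operatorname{tr}\rho(x)=0$ and $\lambda=0$. With that line added, the radical reduction goes through.

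A few smaller points.

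\emph{Nilpotency modulo $\ker\rho$.} To make $\mathfrak n$ nilpotent modulo $\ker\rho$ you need $\tau(d,q)\cap M_d(\mathcal P)\subseteq\ker\rho$, not just $\mathcal P E_{ij}\subseteq\ker\rho$. This holds because $\mathcal P$ is generated by central elements. Hence $\mathcal P\cap[\mathbb C_q,\mathbb C_q]=[\mathbb C_q,\mathcal P]$, and the diagonal part is spanned by brackets $[aE_{ij},pE_{ji}]$ with $p\in\mathcal P$.

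\emph{The case $d=2$.} There is no third index, so moving $a$ between off-diagonal positions must be replaced by bracketing $E_{12}\otimes a$ with:
\begin{itemize}
\item $(E_{11}-E_{22})\otimes t^b$ for $b\in\operatorname{rad}f$;
\item $E_{11}\otimes t^b$ and $E_{22}\otimes t^b$ for $b\notin\operatorname{rad}f$, which lie in $\tau(d,q)$.
\end{itemize}
For every $d$, the identity $\operatorname{ad}(E_{ki}\otimes 1)^2(E_{ik}\otimes a)=-2E_{ki}\otimes a$ handles the transposed position.

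\emph{Jacobson radical.} You do not need it at all. Each $P_j$ divides $Q_j^{e}$ for some $e$ and the generators are central, so $\mathcal J^{M}\subseteq\mathcal P$ once $M>n(e-1)$. This is all the nilpotency you use.

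\emph{Step 3.} It rests entirely on the recalled isomorphism $\tau(d,q)/\tilde{\mathcal J}\cong\bigoplus_K\mathfrak{sl}_{dN}(\mathbb C)$. The paper asserts this without proof, and its printed block size fails a dimension count. For $n=2$ and $q_{12}$ a primitive $\ell$-th root of unity, $\dim\mathbb C_q/\mathcal J=\ell^2K$. So the simple blocks are $M_\ell(\mathbb C)$, not $M_{N_1N_2}(\mathbb C)=M_{\ell^2}(\mathbb C)$. What your argument genuinely proves is that $\rho$ factors through $\tau(d,q)/\tilde{\mathcal J}$, for an ideal $\mathcal J$ generated by central polynomials in the $t_j^{N_j}$ with simple nonzero roots. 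That is the real content of the theorem.
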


\begin{theorem}\label{theorem 7.2}
    Let $V$ be an irreducible integrable $\hat{\tau}(d,q)$-module with finite-dimensional weight spaces and suppose that each central element $c_i \;(1 \leq i \leq n)$ acts trivially on V. Then $V$ is isomorphic to an irreducible component of $\overline{V} \otimes A_n$, where $\overline{V}$ is a finite-dimensional irreducible integrable $\tau(d,q)$-module obtained by pullback along the canonical homomorphism $\tilde{\pi}$, from some finite-dimensional irreducible integrable module for $\bigoplus_{K} \mathfrak{sl}_{dN}(\mathbb{C})$.
\end{theorem}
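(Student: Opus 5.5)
The plan is to assemble Theorem~\ref{theorem 7.2} from the reductions already established, in four steps, checking at each one that the hypotheses of the next result hold.

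\textbf{Step 1 (center acts trivially).} Since every $c_i$ acts by zero, Proposition~\ref{proposition 4.2} forces $k=0$ in Proposition~\ref{proposition 2.4}. The lemma following it then shows that all of $HC_1(\mathbb{C}_q)$ acts trivially on $V$. So $V$ is a module for $\tau(d,q)\oplus D$. Via the decomposition $\mathfrak{gl}_d(\mathbb{C}_q)=\mathfrak{sl}_d(\mathbb{C}_q)\oplus(I\otimes Z(\mathbb{C}_q))$, letting $I\otimes Z(\mathbb{C}_q)$ act by zero, it is also a module for $(\mathfrak{g}\otimes\mathbb{C}_q)\oplus D$.

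\textbf{Step 2 (highest weight space).} By Proposition~\ref{proposition 4.1}, the highest weight space $V^+$ is nonzero. The degenerate case $\lambda=0$ in Lemma~\ref{lemma 5.3} is handled by Remark~\ref{remark 5.4}: there $V$ is one-dimensional, which is the trivial case $\overline V=\mathbb{C}$, the pullback of the trivial module. Otherwise Lemma~\ref{lemma 5.13} supplies bijective highest central operators $z_i$, and Proposition~\ref{proposition 5.12} shows that $W$ is proper.

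\textbf{Step 3 (irreducible quotient and recovery).} Theorem~\ref{theorem 5.16} then gives a finite-dimensional irreducible quotient $\overline V=V/N$ over $\mathfrak{g}\otimes\mathbb{C}_q$.

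I then check that $\overline V$ is really an irreducible $\tau(d,q)$-module. The element $I\otimes Z(\mathbb{C}_q)$ acts by zero on $V$, hence on $\overline V$. Since $\mathfrak{g}\otimes\mathbb{C}_q$ is the sum of $\tau(d,q)$ and this central ideal, irreducibility over $\mathfrak{g}\otimes\mathbb{C}_q$ is the same as irreducibility over $\tau(d,q)$. Integrability of $\overline V$ is automatic, because it is a quotient of an integrable module, or simply because it is finite-dimensional.

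The theorem of Section~6 then embeds $V$, via the nonzero map $\Psi$, as an irreducible component of $\overline V\otimes A_n$.

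\textbf{Step 4 (classification input).} Finally I apply \cite[Theorem 6.6]{[18]} to the finite-dimensional irreducible $\tau(d,q)$-module $\overline V$. This writes $\overline V\cong(\rho\circ\tilde\pi,V_\rho)$ for some finite-dimensional irreducible representation $\rho$ of $\bigoplus_K\mathfrak{sl}_{dN}(\mathbb{C})$, which is exactly the description claimed.

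\textbf{Main obstacle.} The delicate point is Step 3. Theorem~\ref{theorem 5.16} produces a quotient over $\mathfrak{g}\otimes\mathbb{C}_q$ without derivations, and one must make sure the map $\Psi$ really intertwines the $\hat\tau(d,q)$-action. This holds because the action on $\overline V\otimes A_n$ is defined by degree shifts matching the $\mathbb{Z}^n$-grading of $V$, with $\beta=0$ as in Remark~\ref{remark 5.4}.

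Injectivity of $\Psi$ then follows because its kernel is a $\hat\tau(d,q)$-submodule of the irreducible module $V$, and it is proper since $\overline V\neq0$. I would verify the intertwining relation $\Psi((x\otimes t^r)v)=(x\otimes t^r)\Psi(v)$ directly on homogeneous vectors. The identity isomorphism then follows from irreducibility of $\Psi(V)$ together with the complete reducibility statement in part (1) of the Section~6 theorem.
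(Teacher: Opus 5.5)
Your proposal is correct and follows essentially the same route as the paper. The paper obtains this theorem by chaining the same steps you use: triviality of $HC_1(\mathbb{C}_q)$ from Section 4, the finite-dimensional irreducible quotient $\overline V$ of Theorem~\ref{theorem 5.16}, the embedding $\Psi$ of $V$ into $\overline V\otimes A_n$ from Section 6, and \cite[Theorem 6.6]{[18]}. Your explicit checks fill in details the paper leaves implicit: the separate $\lambda=0$ case, the fact that irreducibility over $\mathfrak{g}\otimes\mathbb{C}_q$ equals irreducibility over $\tau(d,q)$ because $I\otimes Z(\mathbb{C}_q)$ acts by zero, and the intertwining and injectivity of $\Psi$.
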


\medskip
Combining our results for the case of trivial central action (\cref{theorem 7.2}) with the classification obtained by S.~Eswara Rao and K.~Zhao~\cite{[18]} for modules with nontrivial central action, we arrive at a complete description of all irreducible integrable $\hat{\tau}(d,q)$-modules with finite-dimensional weight spaces. The following Theorem summarizes the full classification.
\medskip

\begin{theorem}\label{thm:7.3}
Let $V$ be an irreducible integrable $\hat{\tau}(d,q)$–module with finite-dimensional weight spaces. Then up to a grade shift, $V$ belongs to exactly one of the following two types

\begin{enumerate}
\item If some central element $c_i$ acts nontrivially on $V$, then $V$ is an irreducible component of
\[
\overline{W} \otimes A_{\,n-1}, \qquad \text{where} \quad
A_{\,n-1} = \mathbb{C}[t_1^{\pm1}, \dots, t_{\,n-1}^{\pm1}]
\]
and $\overline{W}$ is an irreducible integrable highest-weight module over 
\[
\tilde{\tau}^{(n)} = \tilde{\tau}(d,q) \oplus \mathbb{C} d_n.
\]
All irreducible components of $\overline{W} \otimes A_{\,n-1}$ are isomorphic up to a grade shift \cite[Theorem 6.12]{[18]}.

\item If all central elements $c_i, \; 1\le i \le n$ act trivially on $V$, then $V$ is an irreducible component of
\[
\overline{V} \otimes A_n, \qquad \text{where} \quad
A_n = \mathbb{C}[t_1^{\pm1}, \dots, t_n^{\pm1}]
\]
and $\overline{V}$ is a finite-dimensional irreducible integrable $\tau(d,q)$–module. 
All irreducible components of $\overline{V} \otimes A_n$ are isomorphic up to a grade shift
\normalfont {(\cref{theorem 7.2})}.
\end{enumerate}

\end{theorem}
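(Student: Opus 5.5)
The plan is to split according to whether the center $C=\operatorname{span}\{c_1,\dots,c_n\}$ acts trivially on $V$. By \cref{lemma 3.2}(5), each $c_i$ acts on the irreducible module $V$ by a constant integer. So exactly one of two things holds: either some $c_i$ acts by a nonzero scalar, or all $c_i$ act by $0$. This dichotomy gives the ``exactly one of two types'' assertion, since the two cases are distinguished by the central character and that character is an isomorphism invariant, preserved under grade shifts.

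\emph{Nontrivial case.} Suppose some $c_i$ acts nontrivially. I would first use a coordinate change of $\mathbb{C}_q$ (as in \cref{proposition 2.4}, and as in \cite{[18]}) to arrange that only $c_n$ is nonzero. I would then invoke \cite[Theorem 6.12]{[18]} directly. That result realizes $V$ as an irreducible component of $\overline{W}\otimes A_{n-1}$, where $\overline{W}$ is an irreducible integrable highest weight module for $\tilde\tau(d,q)\oplus\mathbb{C}d_n$, and all components are isomorphic up to grade shift. Nothing new needs to be proved here beyond checking that the hypotheses of \cite{[18]} (rational $q$, finite-dimensional weight spaces, integrability) match ours.

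\emph{Trivial case.} Suppose all $c_i$ act trivially. By the lemma of Section~4, the whole of $HC_1(\mathbb{C}_q)$ then acts trivially, so $V$ is effectively a graded $\tau(d,q)\oplus D$-module, and we may regard it as a $\mathfrak{g}\otimes\mathbb{C}_q$-module with $I\otimes Z(\mathbb{C}_q)$ acting by zero. \cref{theorem 5.16} produces a finite-dimensional irreducible quotient $\overline{V}$, which restricts to a finite-dimensional irreducible $\tau(d,q)$-module. The theorem of Section~6 then embeds $V$ as an irreducible component of $\overline V\otimes A_n$. That same theorem says this module is completely reducible with finitely many components, all isomorphic up to grade shift. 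Finally, \cref{theorem 7.2}, via \cite[Theorem 6.6]{[18]}, identifies $\overline V$ as a pullback along $\tilde\pi$.

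\emph{Main obstacle.} The delicate point is the claim that all components of $\overline V\otimes A_n$ are isomorphic up to grade shift. The Section~6 theorem is phrased in terms of finitely many components. To upgrade this, I would argue as in \cite[Theorem 3.9]{[13]}. The operators $w\otimes t^m\mapsto w\otimes t^{m+s}$ commute with $\tilde\tau(d,q)$ and shift the $D$-grading, so they permute the components by grade shifts. Irreducibility of $\overline V$ as a $\tau(d,q)$-module then forces any two components to be related by such a shift.
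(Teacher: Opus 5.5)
Your proposal follows the paper's proof. The paper also splits on whether the integer central charges $\lambda(c_i)$ vanish and cites \cite[Theorem 6.12]{[18]} for the nontrivial case. For the trivial case it chains the Section~4 lemma, \cref{theorem 5.16}, the Section~6 theorem (which it proves by appeal to \cite{[13]}) and \cref{theorem 7.2}. Your shift-operator sketch for the ``isomorphic up to grade shift'' clause is sound once made precise: $\sum_{s}S_s(\Psi(V))$ is a shift-stable graded submodule, hence equal to $X\otimes A_n$ for some nonzero $\tau(d,q)$-submodule $X\subseteq\overline V$. Irreducibility of $\overline V$ gives $X=\overline V$, so every simple summand of $\overline V\otimes A_n$ is isomorphic to some $S_s(\Psi(V))$.
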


\end{document}